\newtheorem{thm}{Theorem}[section]
\newtheorem{prop}[thm]{Proposition}
\newtheorem{conj}[thm]{Conjecture}
\newtheorem{lem}[thm]{Lemma}
\theoremstyle{definition}
\numberwithin{equation}{section}
\newtheorem{rem}[thm]{Remark}
\newtheorem{ex}[thm]{Example}
\newtheorem{defn}[thm]{Definition}
\newtheorem{ass}[thm]{Assumption}
\newtheorem{caut}[thm]{Caution}
\def\bbP{\mathbb{P}}
\def\bbZ{\mathbb{Z}}
\def\bbQ{\mathbb{Q}}
\def\bfS{\mathbf{S}}
\def\bfM{\mathbf{M}}
\def\bfQ{\mathbf{Q}}
\def\bfO{\mathbf{O}}
\def\bfA{\mathbf{A}}
\def\ve{\varepsilon}
\begin{document}

\bibliographystyle{amsalpha}

\title[Exact WKB analysis and cluster algebras II]
{Exact WKB analysis and cluster algebras II: simple poles, orbifold points, and generalized cluster algebras}

\author{Kohei Iwaki}
\address{\noindent Research Institute 
for Mathematical Sciences, Kyoto University, Kyoto,
657-8501, Japan}
\email{iwaki@kurims.kyoto-u.ac.jp}

\author{Tomoki Nakanishi}
\address{\noindent Graduate School of Mathematics, Nagoya University, 
Chikusa-ku, Nagoya,
464-8604, Japan}
\email{nakanisi@math.nagoya-u.ac.jp}

\subjclass[2010]{13F60, 34M60}

\date{}
\maketitle

\begin{abstract}
This is a continuation of developing mutation theory 
in  exact WKB analysis using the framework of cluster algebras.
Here we study the
Schr\"odinger equation on a compact Riemann surface
with turning points of simple-pole type. 
We show that the orbifold triangulations by Felikson, Shapiro, and Tumarkin
provide a natural framework of describing the mutation of Stokes graphs,
where simple poles correspond to orbifold points.
We then show that under the mutation of Stokes graphs
around simple poles the Voros symbols mutate
as the variables of generalized cluster algebras introduced by
Chekhov and Shapiro.
\end{abstract}


\section{Introduction}
This is a continuation of the paper \cite{Iwaki14a},
where the mutation theory in {\em exact WKB analysis\/} 
using the framework of {\em cluster algebras\/} was initiated.
The exact WKB analysis is a method to study the WKB solutions of the 
Schr\"odinger equation using the Borel resummation.
It was initiated by Voros \cite{Voros83}, and developed 
by Aoki, Kawai, and Takei \cite{Aoki91,Kawai05}, 
Delabaere, Dillinger, and Pham \cite{Delabaere93,Delabaere99}, and others.
Meanwhile, cluster algebras were introduced by Fomin and Zelevinsky
\cite{Fomin03a,Fomin07}. 
They were originally introduced to study Lie theory, 
but nowadays they are recognized as a common algebraic/combinatorial 
structure underlying in various areas in mathematics.

Let us briefly summarize the main result of  
\cite{Iwaki14a} highlighting some key words.
We study the {\em WKB solutions\/} of 
the {\em Schr\"odinger equation\/} on a compact Riemann surface.
The principal term of the {\em potential \/} of 
the Schr\"odinger equation determines 
a {\em quadratic differential\/} on the surface,
and the latter further determines a graph on the surface 
called the {\em Stokes graph}.
The Stokes graph plays a central role to study the local and 
global analytic properties of the WKB solutions, 
and the {\em Voros symbols} describe the monodoromy data 
of the WKB solutions. 
Under a continuous deformation of the potential, 
the Stokes graph may change its topology.
We call this phenomenon the {\em mutation\/} of Stokes graphs.
Meanwhile, the Stokes graph induces a {\em triangulation\/} 
of a bordered surface with marked points in the sense of \cite{Fomin08}.
Furthermore, the mutation of Stokes graphs corresponds to the mutation of 
triangulations called {\em signed flips\/} and {\em signed pops}.
These facts provide a natural bridge between exact WKB analysis 
and cluster algebra theory with the help of the
{\em surface realization\/} of cluster algebras developed by 
\cite{Gekhtman05,Fock03b,Fomin08,Fomin08b}.
Along the mutation of Stokes graphs,
the Voros symbols also mutate (or jump) due to the {\em Stokes phenomenon}. 
The main result of \cite{Iwaki14a} is that the Voros symbols mutate as
the variables of the corresponding cluster algebra. To be more precise,
they mutate by {\em signed mutations\/} and {\em signed pops\/} 
of the {\em extended seeds}, which are  certain extensions of 
the ordinary mutation of cluster algebras.

In \cite{Iwaki14a} it was technically essential to assume that 
the Schr{\"o}dinger equation has no 
turning points of {\em simple-pole type} 
({\em simple-pole} for short).
In this paper we lift this restriction and treat the case where 
there are simple poles.
The connection formula of the WKB solutions around a simple pole was
obtained by Koike \cite{Koike00}.
This is the counterpart of the celebrated Voros' connection formula
of the WKB solutions around a usual {\em turning point\/}.
Based on Koike's connection formula, 
we obtain the following results in this paper.

(a). Firstly, we obtain the Delabaere-Dillinger-Pham (DDP) type jump 
formula for the Voros symbols at the reduction of a
Stokes segment involving a simple pole 
(Theorem \ref{thm:Stokes-auto-II}). 
Our formula is closely related to the result of 
Kamimoto et al.\ \cite{Kamimoto10} 
(see Remark \ref{remark:known-results}).

(b). Secondly, we reformulate the above jump formula in view of 
cluster algebra theory,
where we have two novel features in contrast to the previous 
case in \cite{Iwaki14a}.

\begin{itemize}
\item[(1).] Instead of using the surface realization 
of cluster algebras, we need to use the {\em orbifold realization\/} 
of cluster algebras  by Felikson, Shapiro, and Tumarkin \cite{Felikson11}. 
The simple poles correspond to the {\em orbifold points\/} therein.

\item[(2).] The Voros symbols mutate as the variables of 
{\em generalized cluster algebras\/} introduced by 
Chekhov and Shapiro \cite{Chekhov11} instead of
ordinary cluster algebras (Theorem \ref{thm:mut2}). 
To be more precise, they mutate by {\em signed mutations\/},
which are certain extensions of the ordinary mutation of 
generalized cluster algebras.
\end{itemize}

The organization of the paper is as follows.
In Section 2 we consider the Schr\"odinger equation 
on a compact Riemann surface especially having simple poles.
Then, we obtain the DDP type jump formula
for the Voros symbols at the reduction
of a Stokes segment involving a simple pole.
In Section 3 we show how the above reduction is related to
the signed flips of labeled Stokes triangulations of an orbifold.
In Section 4 we reformulate our jump formula 
in view of the above correspondence to Stokes triangulations,
and show that the Voros symbols
mutate as the variables of generalized cluster algebras
at the mutation of Stokes graphs involving simple poles.

\bigskip
{\em Acknowledgements.} We greatly thank Dylan Thurston, who suggested 
us the link between simple poles and orbifold points at the workshop 
``Cluster algebras and related topics" held at Mathematisches 
Forschungsinstitut Oberwolfach in December, 2013.
We also thank Anna Felikson, Anne-Sophie Gleitz, 
Takahiro Kawai, Tatsuya Koike, Michael Shapiro 
and Yoshitsugu Takei for useful communications and discussions.
K.I is supported by Research Fellowships of 
JSPS KAKENHI Grant Number 13J02831. 

\section{Exact WKB analysis of Schr{\"o}dinger 
equation with simple poles}

\subsection{Schr{\"o}dinger equation and assumptions} 

Let us consider a Schr{\"o}dinger equation 
(i.e., a second order linear differential equation containing a 
large parameter $\eta$ which corresponds to the inverse of 
the Planck constant $\hbar$) for a complex function 
($(-{1}/{2})$-form, to be precise) $\psi(z,\eta)$ 
defined on a compact connected Riemann surface $\Sigma$ 
which also depends on $\eta$:
\begin{equation} \label{eq:Sch}
\left( \frac{d^{2}}{dz^{2}} - \eta^{2} Q(z,\eta) \right) \psi(z,\eta)= 0,
\end{equation}
\[
Q(z,\eta) = Q_{0}(z) + \eta^{-2} Q_{2}(z).
\]
Here \eqref{eq:Sch} is a local expression of our equation in 
a local coordinate $z$ of $\Sigma$. The coefficients 
$Q_{0}(z)$, $Q_{2}(z)$ are meromorphic functions.
If we take a coordinate transformation $z=z(\tilde{z})$, 
we obtain an equation of the same form as \eqref{eq:Sch} for
the coordinate $\tilde{z}$ as follows:
\begin{equation} \label{eq:Sch-w}
\left( \frac{d^{2}}{d\tilde{z}^{2}} - 
\eta^{2} \tilde{Q}(\tilde{z},\eta) \right) 
\tilde{\psi}= 0, \quad \tilde{\psi}(\tilde{z},\eta) 
= \psi\bigl(z(\tilde{z}),\eta\bigr)
\left(\frac{dz(\tilde{z})}{d\tilde{z}}\right)^{-1/2},
\end{equation}
\begin{equation} \label{eq:transformation-of-potentail}
\tilde{Q}(\tilde{z},\eta) = 
Q\bigl(z(\tilde{z}),\eta\bigr)
\left(\frac{dz(\tilde{z})}{d\tilde{z}}\right)^{2}-
\frac{1}{2}\eta^{-2}\{ z(\tilde{z}); \tilde{z}\}.
\end{equation}
Here $\{ z(\tilde{z}); \tilde{z}\}$ is the Schwarzian 
derivative:
\[
\{ z(\tilde{z}); \tilde{z}\} = 
\left({\frac{d^{3}z(\tilde{z})}{d\tilde{z}^{3}}}\biggl/
{\frac{dz(\tilde{z})}{d\tilde{z}}}\right)-\frac{3}{2}
\left({\displaystyle \frac{d^{2}z(\tilde{z})}{d\tilde{z}^{2}}}\biggl/
{\displaystyle \frac{dz(\tilde{z})}{d\tilde{z}}}\right)^{2}.
\]
In particular, the transformation law 
\begin{equation} \label{eq:transformation-Q0}
\tilde{Q}_{0}(\tilde{z}) = Q_{0}\bigl(z(\tilde{z})\bigr)
\left(\frac{dz}{d\tilde{z}}\right)^{2}
\end{equation}
of the principal terms of the potential of the Schr\"{o}dinger 
equations coincides with that of a {\em meromorphic quadratic differential}, 
that is, a meromorphic section of the line bundle 
$\omega_{\Sigma}^{\otimes2}$, where $\omega_{\Sigma}$ is the holomorphic 
cotangent bundle on $\Sigma$. 
\begin{defn}
\label{def:quad-diff}
{\em The quadratic differential associated with 
the Schr\"{o}dinger equation \eqref{eq:Sch}} 
is the meromorphic quadratic 
differential on $\Sigma$ which is locally given by
\begin{equation} \label{eq:quad-diff}
\phi = Q_{0}(z)dz^{\otimes2}.
\end{equation}
Here $Q_{0}(z)$ is the principal term of 
the potential $Q(z,\eta)$ of the Schr\"{o}dinger equation 
in a local coordinate $z$. 
\end{defn}

The zeros and simple poles of $\phi$ play important roles 
in exact WKB analysis. 
\begin{defn}
[{\cite[Definition 2.6]{Kawai05}, \cite{Koike00}}]
Let $\phi$ be the quadratic differential defined by \eqref{eq:quad-diff}.
\begin{itemize}
\item %
{\em Turning points} of \eqref{eq:Sch} are zeros of $\phi$.  
\item %
{\em Turning points of simple-pole type} of \eqref{eq:Sch}
are simple poles of $\phi$. 
We call these points {\em simple poles} of \eqref{eq:Sch} 
to distinguish them from 
the usual turning points defined above. 
\end{itemize}
\end{defn}

The set of zeros, simple poles and 
poles of order $\ge 2$ of $\phi$ are denoted by 
$P_0$, $P_{\rm s}$ and $P_{\infty}$, respectively.
We also set $P = P_0 \cup P_{\rm s} \cup P_{\infty}$. 

In our previous paper \cite{Iwaki14a} which disclosed a 
relationship between exact WKB analysis and cluster algebras, 
we assumed that $\phi$ does {\em not} have any simple poles. 
In this paper we will extend the results of \cite{Iwaki14a} 
to the case that $\phi$ may have simple poles. 

We impose a similar assumption as \cite{Iwaki14a}
for the potential $Q(z,\eta)$ in \eqref{eq:Sch}.
\begin{ass} \label{ass:zeros-and-poles}
Let $\phi$ be the quadratic differential associated with \eqref{eq:Sch}.
\begin{itemize}
\item %
$\phi$ has at least one zero, and at least one pole. %
\item %
All zeros of $\phi$ are simple. %
\item %
Suppose that a point $p \in \Sigma$ is a pole of $Q_{2}(z)$.
Then, $p$ is a pole of $\phi$. %
\item %
For any simple pole $s$, $Q_{2}(z)$ has a pole of order at most 2 at $s$. 
\item %
Suppose that a point $p \in P_{\infty}$ is a pole of $\phi$ of order $2$, 
and $z$ be a local coordinate around $p$ satisfying $z(p)=0$. 
Then, $Q_{2}(z)$ has a pole of order $2$ at $z=0$ and satisfies
\begin{equation}
Q_{2}(z) = -\frac{1}{4z^{2}} \left( 1 + O(z) \right) ~~
\text{as $z \rightarrow 0$}.
\end{equation} %
\item %
Suppose that a point $p \in P_{\infty}$ is a pole of $\phi$ 
of order $m \ge 3$. Then, $Q_2(z)$ may have pole at $p$ and 
\begin{equation}
(\text{pole order of $Q_2(z)$ at $p$}) < 1 + \frac{m}{2}.
\end{equation} 
\end{itemize}
\end{ass}

\subsection{WKB solutions}

For the equation \eqref{eq:Sch}, we can construct a pair 
of formal solutions, called the {\em WKB solutions}, 
in the following form (see \cite[Section 2]{Kawai05}):
\begin{equation} \label{eq:WKBsol}
\psi_{\pm}(z,\eta) = \frac{1}{\sqrt{S_{\rm odd}(z,\eta)}}
\exp\left(\pm\int^{z}S_{\rm odd}(z,\eta)~dz\right),
\end{equation}
where $S_{\rm odd}(z,\eta)$
is a formal (Laurent) series in $\eta^{-1}$ defined as 
the {\em odd part} of the formal solution 
$S(z,\eta) = \eta S_{-1}(z) + S_0(z) + \eta^{-1} S_1(z) + \cdots$ 
of the Riccati equation 
\begin{equation} \label{eq:Riccati}
\frac{dS}{dz}+S^{2} = \eta^{2}Q(z,\eta)
\end{equation}
associated with \eqref{eq:Sch}. Namely, if we denote by 
$S^{(\pm)}(z,\eta) = \pm \eta \sqrt{Q_{0}(z)} + \cdots$ 
the two formal solutions of \eqref{eq:Riccati}, 
then their odd part and even part are defined by
\begin{equation} \label{eq:Sodd-and-Seven}
S_{\rm odd}(z,\eta) = \frac{1}{2}\left( 
S^{(+)}(z,\eta) - S^{(-)}(z,\eta) \right),\quad
S_{\rm even}(z,\eta) = \frac{1}{2}
\left(S^{(+)}(z,\eta) + S^{(-)}(z,\eta) \right).
\end{equation}

It is known that the (formal series valued) 1-form 
$S_{\rm odd}(z,\eta)dz$ transforms as 
\begin{equation} \label{eq:Sodd-is-coordinate-free}
\tilde{S}_{\rm odd}(\tilde{z},\eta) = 
S_{\rm odd}\bigl( z(\tilde{z}),\eta \bigr) 
\frac{dz(\tilde{z})}{d\tilde{z}}
\end{equation}
under the coordinate transformation $z = z(\tilde{z})$. 
(See \cite[Corollary 2.17]{Kawai05}, \cite[Proposition 2.7 (b)]{Iwaki14a}.)
Here $\tilde{S}_{\rm odd}(\tilde{z},\eta)$ is the odd part of 
the formal solution of the Riccati equation associated 
with \eqref{eq:Sch-w}. 
Therefore, the 1-form $S_{\rm odd}(z,\eta)dz$ is globally defined 
(but multi-valued) on $\Sigma\setminus{P}$, and 
its integrals are independent of the choice of the local coordinate. 

Each coefficient of $S_{\rm odd}(z,\eta)$
may have singularities at points in $P$. 
However, for any $a \in P_{0}$ (resp., for any $s \in P_{\rm s}$), 
we can define the integral 
\[
\int_a^z S_{\rm odd}(z,\eta)dz \quad  
\left({\rm resp.,} \int_s^z S_{\rm odd}(z,\eta)dz  \right)
\]
in the sense of contour integral 
(see \cite[Section 2]{Kawai05}). 
On the other hand, we cannot define an integral of
$S_{\rm odd}(z,\eta)dz$ from any point $p \in P_{\infty}$ because 
the principal term $\eta \sqrt{Q_0(z)}dz$ is singular at $p$. 
However, subtracting the principal term, we can define an 
integral from $p$ due to Assumption \ref{ass:zeros-and-poles}.
\begin{prop} [{\cite[Proposition 2.8]{Iwaki14a}}]
\label{prop:Sodd-reg}
Under Assumption \ref{ass:zeros-and-poles}, 
for any point $p \in P_{\infty}$ and any local coordinate $z$ of 
$\Sigma$ around $p$ such that $z=0$ at $p$,  the formal power 
series valued 1-form 
\begin{equation} \label{eq:Sodd-reg}
S_{\rm odd}^{\rm reg}(z,\eta) dz := \left( S_{\rm odd}(z,\eta) 
- \eta \sqrt{Q_0(z)} \right) dz
\end{equation}
is integrable at $z = 0$. Namely, for any $n \ge 0$, 
there exists a real number $\ell > -1$ such that 
\begin{equation}
S_{{\rm odd},n}(z) = O(z^{\ell}) \quad \text{as $z\rightarrow 0$},
\end{equation}
where $S_{{\rm odd},n}(z)$ is the coefficient of $\eta^{-n}$ 
in $S^{\rm reg}_{\rm odd}(z,\eta)$. 
Moreover, all coefficients of $S_{\rm odd}^{\rm reg}(z,\eta)$
are holomorphic at $p$ if it is an even order pole of $\phi$.
\end{prop}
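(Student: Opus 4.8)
The plan is to work directly with the Riccati equation \eqref{eq:Riccati} and to track, order by order in $\eta^{-1}$, the vanishing order of $S_{\rm odd}$ at the pole $p$. First I would substitute $S = \sum_{n\ge -1}\eta^{-n}S_n$ into \eqref{eq:Riccati}, obtaining $S_{-1}=\pm\sqrt{Q_0}$, $S_0 = -\tfrac14 Q_0'/Q_0$, $S_1 = (Q_2 - S_0' - S_0^2)/(2S_{-1})$, and, crucially, for every $n\ge 2$ the homogeneous relation
\begin{equation} \label{eq:plan-rec}
S_n = -\frac{1}{2\sqrt{Q_0}}\Bigl( S_{n-1}' + \sum_{\substack{j+k=n-1\\ j,k\ge 0}} S_j S_k \Bigr),
\end{equation}
in which $Q_2$ no longer appears, so that $Q_2$ enters only through $S_1$. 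Writing $S^{(\pm)}=\sum_n \eta^{-n}S_n^{(\pm)}$, I would then invoke the parity relation $S_n^{(-)} = (-1)^n S_n^{(+)}$, which follows by induction from the recursion because differentiation preserves parity under $\sqrt{Q_0}\mapsto-\sqrt{Q_0}$, while $1/\sqrt{Q_0}$ is odd and $Q_2$ is even. By \eqref{eq:Sodd-and-Seven} the series $S_{\rm odd}$ then retains only its odd-index terms, and since the $n=-1$ term is exactly $\eta\sqrt{Q_0}$, the regularized object in \eqref{eq:Sodd-reg} equals $\sum_{n\ge 1,\ n\ \mathrm{odd}}\eta^{-n}S_n$. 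Thus it suffices to bound the order of $S_n$ for odd $n\ge 1$.

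Working in the local coordinate with $Q_0(z)\sim c\,z^{-m}$ ($c\ne 0$, $m\ge 2$), I set $\sigma_n := \mathrm{ord}_{z=0}(S_n)$, so that $\sigma_{-1} = -m/2$ and $\sigma_0 = -1$. Since multiplication is additive on orders, $1/\sqrt{Q_0}$ raises the order by $m/2$, and differentiation lowers the order by at most one, the relation \eqref{eq:plan-rec} gives the lower bound
\begin{equation} \label{eq:plan-ord}
\sigma_n \ \ge\ \frac{m}{2} + \min\Bigl( \sigma_{n-1} - 1,\ \min_{\substack{j+k=n-1\\ j,k\ge 1}}(\sigma_j + \sigma_k) \Bigr),\qquad n\ge 2,
\end{equation}
where I have already used that the endpoint split $j=0$ and the derivative term both contribute $\sigma_{n-1}-1$. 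Using $m/2\ge 1$, a short induction shows $\sigma_n \ge \sigma_1$ for all $n\ge 1$: the first branch gives $m/2 + (\sigma_{n-1}-1)\ge \sigma_1 + (m/2-1)\ge \sigma_1$, and the interior branch gives $m/2 + 2\sigma_1 \ge \sigma_1$, which holds because $\sigma_1 > -1 \ge -m/2$. Hence the integrability assertion $\sigma_n > -1$ reduces to the single base estimate $\sigma_1 > -1$.

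Establishing the base case is the main obstacle, since it is precisely where Assumption \ref{ass:zeros-and-poles} enters and where order-$2$ poles behave differently from the rest. From $S_1 = (Q_2 - S_0' - S_0^2)/(2\sqrt{Q_0})$ one has $\mathrm{ord}(S_0') = \mathrm{ord}(S_0^2) = -2$, so $\sigma_1 \ge m/2 - \max(2,\,q)$, where $q$ denotes the pole order of $Q_2$ at $p$. For $m\ge 3$ the hypothesis $q < 1 + m/2$ gives $\sigma_1 > m/2 - (1 + m/2) = -1$ at once. For $m=2$, however, $q=2$ and the naive bound only yields $\sigma_1\ge -1$; here I would insert the precise normalization $Q_2 = -\tfrac{1}{4z^2}(1+O(z))$ together with $S_0 = \tfrac1{2z}+O(1)$ and verify that the coefficient of $z^{-2}$ in $Q_2 - S_0' - S_0^2$ cancels, leaving a numerator of order $-1$ and hence $\sigma_1\ge 0$. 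This single cancellation is the only delicate computation in the argument.

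Finally, for an even-order pole $\sqrt{Q_0}$ is single-valued and the integrality of $q$ forces $q\le m/2$ (with the case $m=2$ covered by the cancellation above), so the base case improves to $\sigma_1\ge 0$. Rerunning the induction in \eqref{eq:plan-ord} with the stronger hypothesis $\sigma_n\ge 0$, again using $m/2\ge 1$, then shows every coefficient of $S_{\rm odd}^{\rm reg}$ is of nonnegative order, i.e.\ holomorphic at $p$, which gives the last assertion of the statement.
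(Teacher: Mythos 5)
Your argument is correct: the Riccati recursion, the parity relation isolating the odd-index terms, the order-counting induction reducing everything to $\sigma_1>-1$, and the explicit cancellation of the $z^{-2}$ term when $m=2$ are exactly the points that need to be checked, and you check them. This is essentially the same proof as the one the paper defers to in \cite[Proposition 2.8]{Iwaki14a} (the present paper gives no independent proof, only the citation), so there is nothing further to compare.
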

We call $S_{\rm odd}^{\rm reg}(z,\eta)$ the {\em regular part}
of $S_{\rm odd}(z,\eta)$. Proposition \ref{prop:Sodd-reg}
implies that the integral 
\[
\int_p^z S_{\rm odd}^{\rm reg}(z,\eta) dz 
= \int_p^z \left( S_{\rm odd}(z,\eta) - 
\eta \sqrt{Q_0(z)} \right) dz 
\]
from any point $p \in P_{\infty}$ is well-defined. 

\subsection{Borel resummation method}

By the formal series expansion, we have
\begin{equation} \label{eq:WKBsol3}
\psi_{\pm}(z,\eta)=\exp\left(\pm\eta\int^{z}
\sqrt{Q_{0}(z)}~dz\right) \eta^{-1/2}
\sum_{k=0}^{\infty} \eta^{-k} \psi_{\pm, k}(z). 
\end{equation}
The expansion \eqref{eq:WKBsol3} is a divergent series 
of $\eta^{-1}$ in general. 
To give \eqref{eq:WKBsol3} an
analytic interpretation, we employ the 
{\em Borel resummation method} (for a formal series of $\eta$). 
For the convenience of the readers, we give a rough definition 
of Borel resummation method. (See \cite{Costin09} for details.)
\begin{defn}
[{e.g., \cite[\S 1]{Kawai05}}] 
If the Laplace integral 
\begin{equation} \label{eq:Borel sum}
{\mathcal S}[\psi_{\pm}](z,\eta)=
\int_{\mp {\mathfrak s}(z)}^{\infty}
e^{-\eta \hspace{+.1em} y}\psi_{\pm,B}(z,y) dy
\end{equation}
is well-defined and becomes an analytic function 
of sufficiently large $\eta > 0$, and also becomes a 
holomorphic function of $z$ in a neighborhood 
of a point $z_{0} \in \Sigma\setminus P$, 
we call \eqref{eq:Borel sum} the {\em Borel sum} of 
the WKB solution $\psi_{\pm}(z,\eta)$. 
Here ${\mathfrak s}(z) = \int^{z} \sqrt{Q_{0}(z)}dz$ and 
\begin{equation}\label{eq:Bore-transform}
\psi_{\pm,B}(z,y) = \sum_{k=1}^{\infty}
\frac{\psi_{\pm,k}(z)}{\Gamma(k+1/2)}\hspace{+.1em}
(y\pm{\mathfrak s}(z))^{k-1/2}
\end{equation}
is the {\it Borel transform} of $\psi_{\pm}(z,\eta)$. 
The path of the integral \eqref{eq:Borel sum}
is taken along a straight line parallel to 
the positive real axis. 
\end{defn}

We also use a simplified notation $\Psi_{\pm} = {\mathcal S}[\psi_{\pm}]$ 
for the Borel sums of the WKB solutions.
The convergence of the Borel transform $\psi_{\pm,B}(z,y)$  
near $y=\mp{\mathfrak s}(z)$ can be shown easily 
(\cite[Lemma 2.5]{Kawai05}). 
The Borel summability of $\psi_{\pm}(z,\eta)$ demands that 
$\psi_{\pm,B}(z,y)$ can be extended to an analytic function of $y$ 
on a domain containing 
$\{y \in {\mathbb C}~|~ {\rm Re}(y) \ge 0 \}$, 
and grows at most exponentially when $|y| \rightarrow +\infty$ 
on the domain. 
If the Borel sum is well-defined, it is asymptotically expanded 
to the original WKB solution as $\eta \rightarrow +\infty$. 

\subsection{Stokes graphs, Stokes segments and Borel summability}
\label{section:Stokes-graph}

Many properties of the WKB solutions, such as the Borel summability
(i.e., well-definedness of the Borel sum \eqref{eq:Borel sum}),  
can be read off from the geometry of {\em Stokes graph},  
which is a graph described by {\em trajectories} of 
the quadratic differential $\phi$ defined in \eqref{eq:quad-diff}.
Here a trajectory of a quadratic differential $f(z)dz^{\otimes2}$ 
is a leaf of the foliation on $\Sigma$ defined by the equation 
\[
{\rm Im} \int^z \sqrt{f(z)} dz = {\rm constant}.
\]
See \cite{Strebel84} and \cite{Bridgeland13} for properties 
of trajectories of quadratic differentials.

\begin{defn}
[{\cite[Definition 2.6]{Kawai05}, \cite{Koike00}}]
\label{def:Stokes-curve}
Let $\phi$ be the quadratic differential 
associated with \eqref{eq:Sch}.
A {\em Stokes curve} of \eqref{eq:Sch} is a trajectory of $\phi$ 
one of whose end-points is a turning point or a simple pole 
of \eqref{eq:Sch}. Namely, in a local coordinate $z$ on $\Sigma$, 
Stokes curves emanating from $a \in P_0 \cup P_{\rm s}$ are defined as
\begin{equation} \label{eq:StokesCurves}
{\rm Im} \int_{a}^{z}\sqrt{Q_{0}(z)}~dz =0.
\end{equation}
\end{defn}

In the whole of the article, we assume the following:
\begin{ass} \label{ass:trajectory}
The quadratic differential $\phi$ associated with \eqref{eq:Sch} 
has {\em no recurrent trajectory}. Here a recurrent trajectory 
is a trajectory which is not a closed trajectory and has the 
limit set consisting of more than one point in at least one direction
(see \cite[Section 3.4]{Bridgeland13}). 
\end{ass}

Under Assumption \ref{ass:trajectory}, any Stokes curve emanates 
from a turning point or a simple pole flows into a pole of $\phi$ 
or a turning point (see \cite[Section 3.4]{Bridgeland13}). 
In other words, turning points, poles of $\phi$ and Stokes curves 
define a graph on the Riemann surface $\Sigma$. 

\begin{defn}
\begin{itemize}
\item %
The {\em Stokes graph} of \eqref{eq:Sch} is a graph on $\Sigma$ 
whose vertices consist of zeros and poles of $\phi$, 
and whose edges are Stokes curves. 
\item %
Faces of the Stokes graph are called {\em Stokes regions}. 
\end{itemize}
\end{defn}

  \begin{figure}
  \begin{minipage}{0.31\hsize}
  \begin{center}
  \includegraphics[width=35mm]{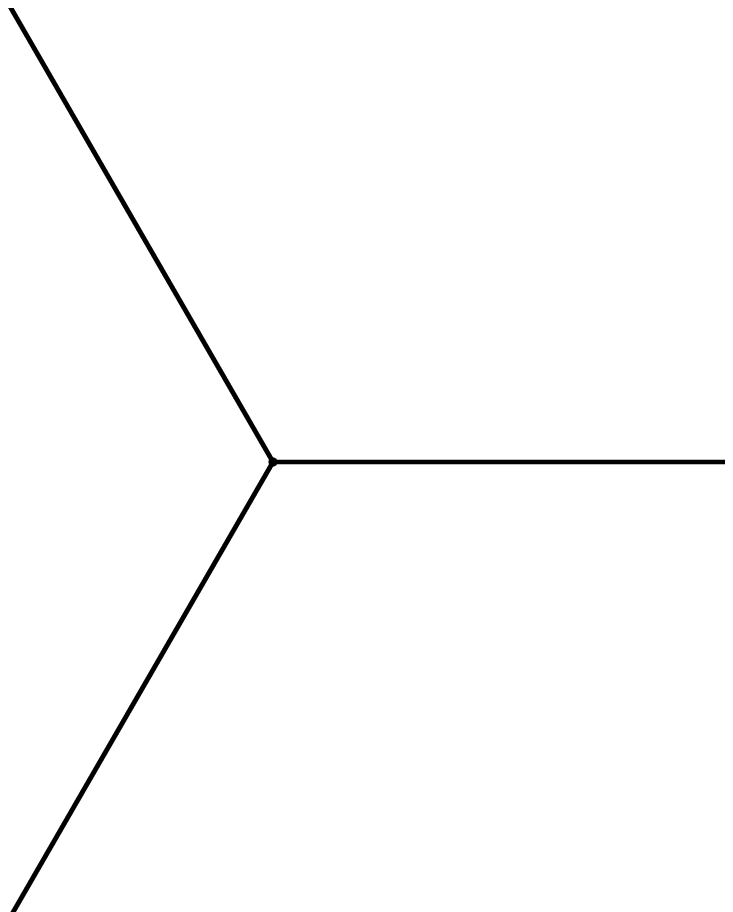} \\
  {\small (a) : $z$.}
  \end{center}
  \label{fig:Airy}
  \end{minipage} 
  \begin{minipage}{0.31\hsize}
  \begin{center}
  \includegraphics[width=35mm]{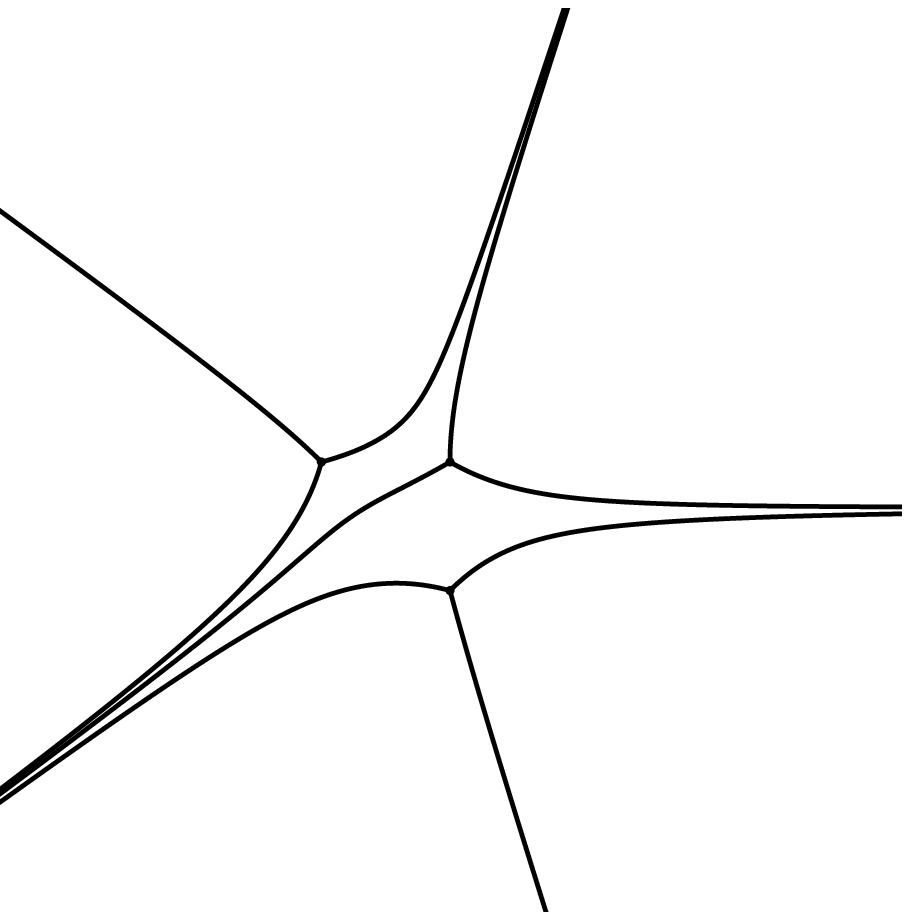} \\
  {\small (b) : $z(z+1)(z+i)$.}
  \end{center}
  \label{fig:cubic}
  \end{minipage}  
  \begin{minipage}{0.31\hsize}
  \begin{center}
  \includegraphics[width=35mm]{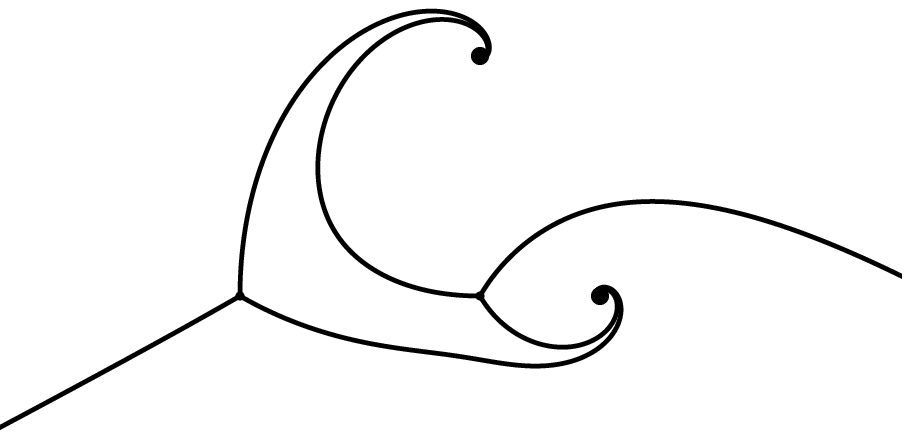} \\[+.2em]
  {\small (c) : $\displaystyle \frac{z(z+2)}{(z-1)^{2}(z-2i)^{2}}$.}
  \end{center}
  \label{fig:hypergeometric}
  \end{minipage}  
\vspace{+.5em}
  \begin{minipage}{0.31\hsize}
  \begin{center}
  \includegraphics[width=35mm]{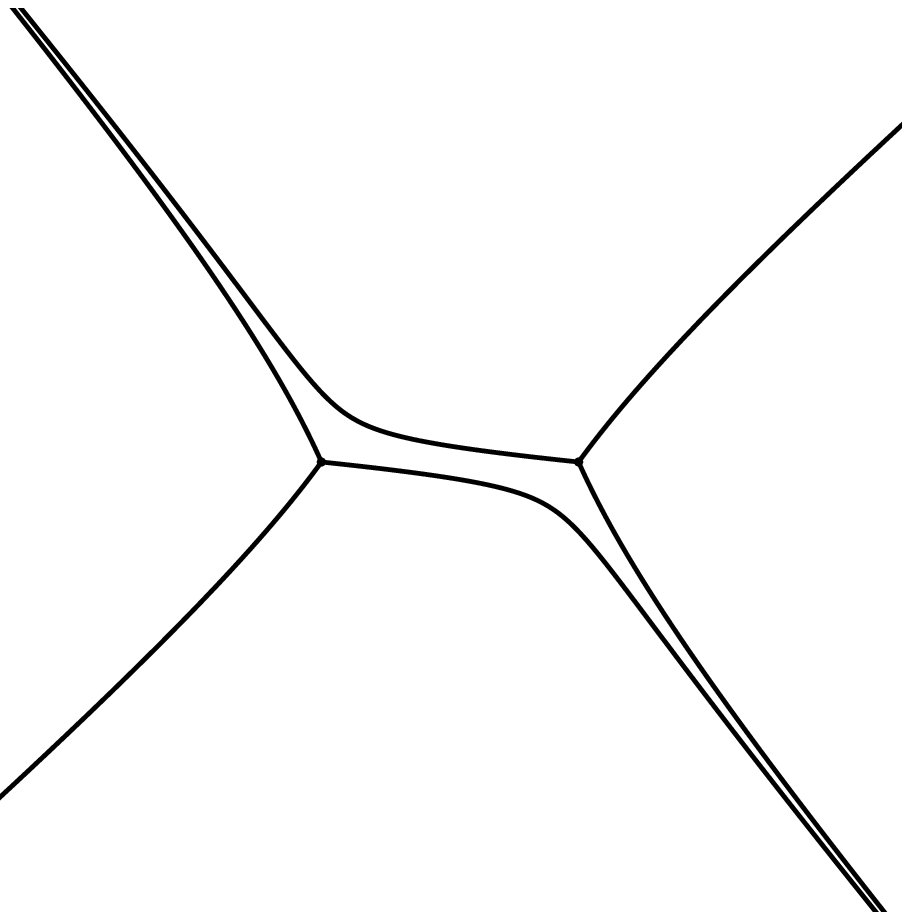} \\
  {\small (d) : $e^{+i\pi/10}(1-z^{2})$.}
  \end{center}
  \label{fig:Weber-minus}
  \end{minipage} 
  \begin{minipage}{0.31\hsize}
  \begin{center}
  \includegraphics[width=35mm]{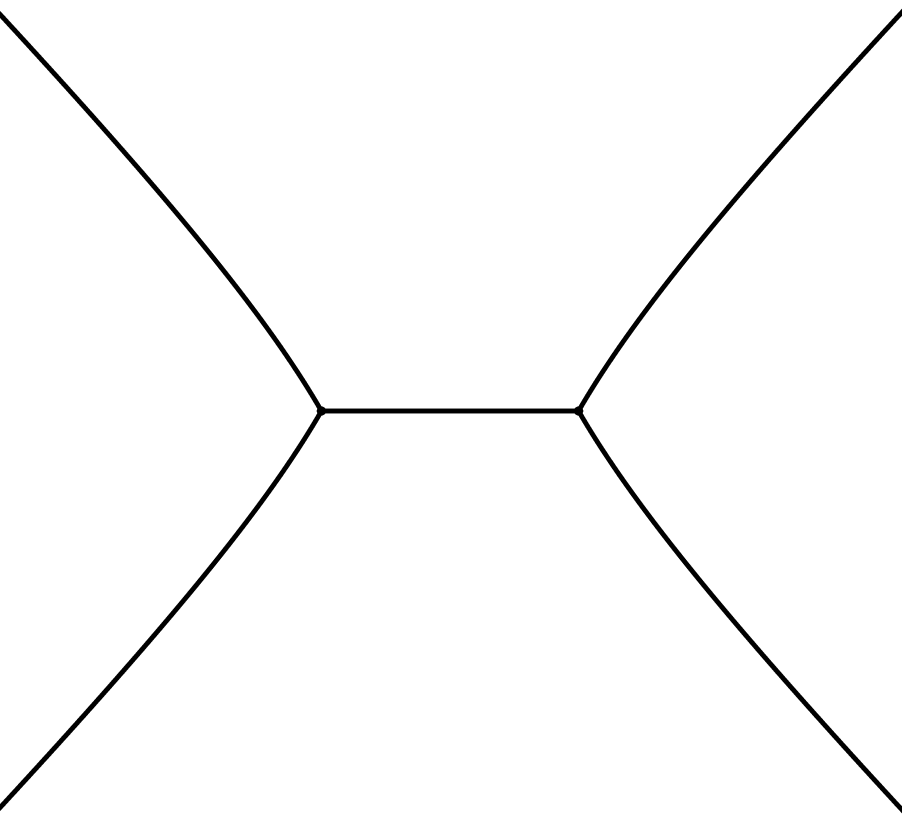} \\
  {\small (e) : $1-z^{2}$.}
  \end{center}
  \label{fig:Weber-0}
  \end{minipage} 
  \begin{minipage}{0.31\hsize}
  \begin{center}
  \includegraphics[width=35mm]{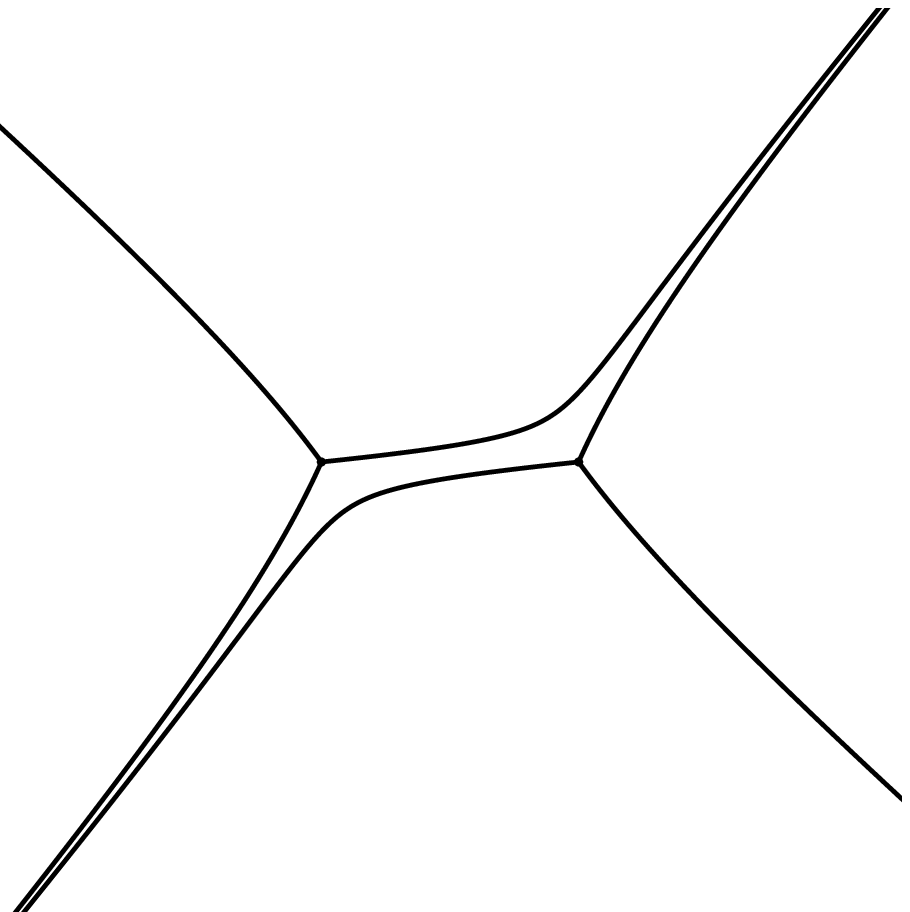} \\
  {\small (f) : $e^{-i\pi/10}(1-z^{2})$.}
  \end{center}
  \label{fig:Weber-plus}
  \end{minipage}   
\vspace{+.5em}
  \begin{minipage}{0.31\hsize}
  \begin{center}
  \includegraphics[width=40mm]{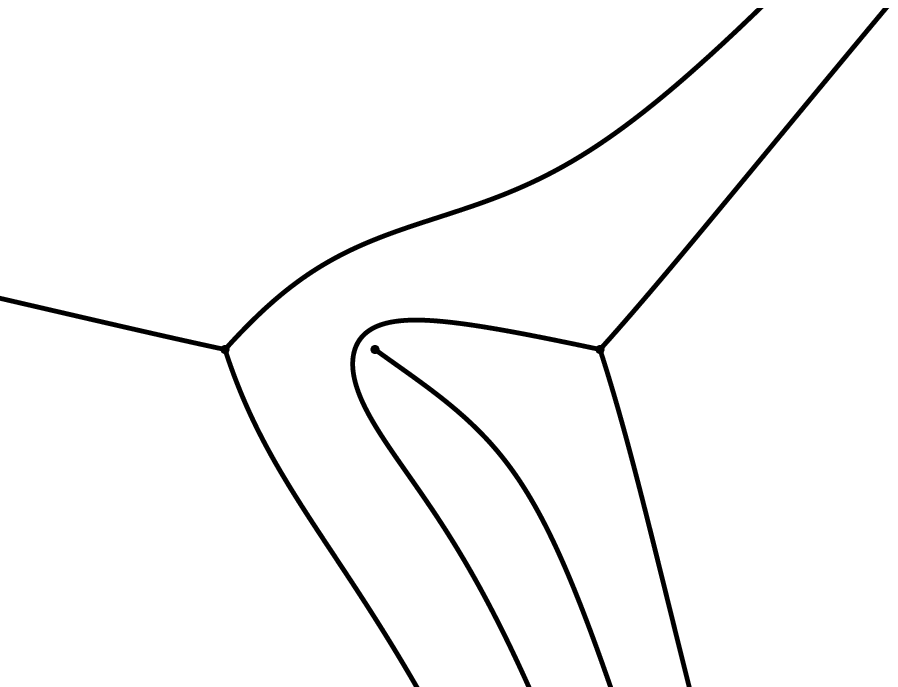} \\[+.3em]
  {\small (g) : $\displaystyle -\frac{e^{+\pi i/5}(z-2)(z+3)}{z+1}$}
  \end{center}
  \label{fig:saddle2-plus}
  \end{minipage} 
  \begin{minipage}{0.31\hsize}
  \begin{center}
  \includegraphics[width=40mm]{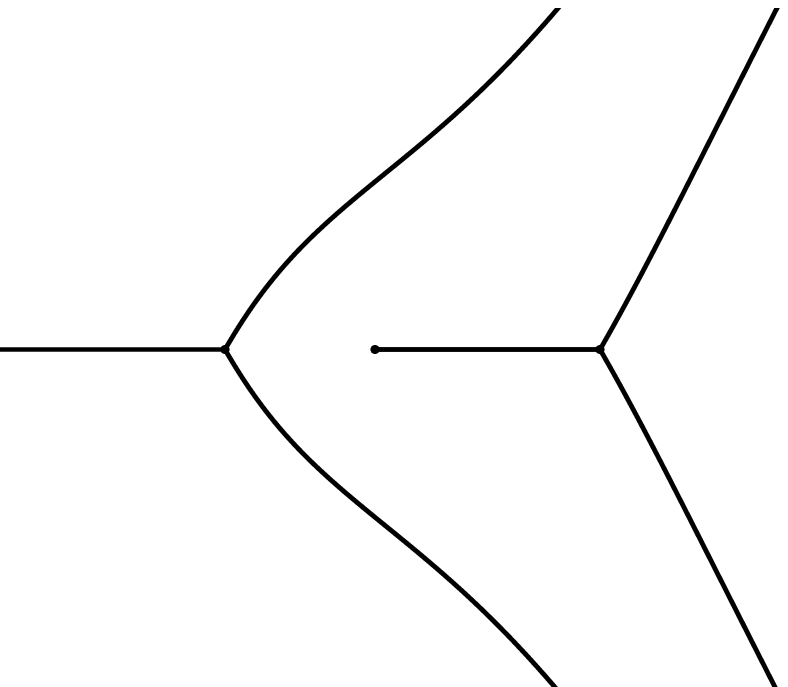} \\[+.3em]
   {\small (h) : $\displaystyle -\frac{(z-2)(z+3)}{z+1}$}
  \end{center}
  \label{fig:saddle2-0}
  \end{minipage} 
  \begin{minipage}{0.31\hsize}
  \begin{center}
  \includegraphics[width=40mm]{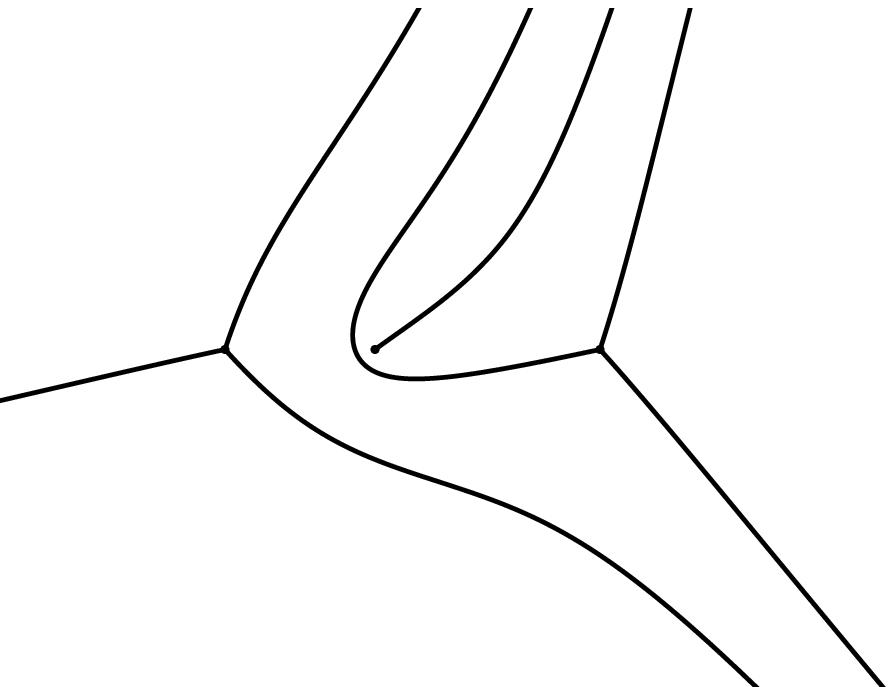} \\[+.3em]
  {\small (i) : $\displaystyle -\frac{e^{-\pi i/5}(z-2)(z+3)}{z+1}$}
  \end{center}
  \label{fig:saddle2-minus}
  \end{minipage}   
\caption{Examples of Stokes graphs. 
The rational functions represent $Q_0(z)$.}  
\label{fig:examples-of-Stokes-graphs}
 \end{figure}

The Stokes graph is defined only from the principal term 
$Q_0(z)$ in the potential (or from the quadratic differential $\phi$). 
We sometimes write $G=G(\phi)$ for the Stokes graph 
when we emphasize the dependence on $\phi$. 

\begin{figure}
\begin{center}
\begin{pspicture}(-1.73,-1.5)(6.7,2)
\psset{linewidth=0.5pt}
%
\psline(0,0)(0,2)
\psline(0,0)(1.73,-1)
\psline(0,0)(-1.73,-1)
\psline(4.63,0)(7.2,0)
\pscurve(1.35,-1)(0,-0.5)(-1.35,-1)
\pscurve(0.2,1.6)(0.4325,0.25)(1.53,-0.6)
\pscurve(-0.2,1.6)(-0.4325,0.25)(-1.53,-0.6)
\pscurve(6.75,0.8)(3.8,0)(6.75,-0.8)
\rput[c]{0}(0,0){$\times$}
\rput[c]{0}(4.5,0){$\otimes$}
\rput[c]{0}(0,-1.7){{(a) around a turning point}}
\rput[c]{0}(5.5,-1.7){{(b) around a simple pole}}
\end{pspicture}
\end{center}
\caption{Foliations around a turning point and a simple pole.}
\label{fig:foliation1}
\end{figure}
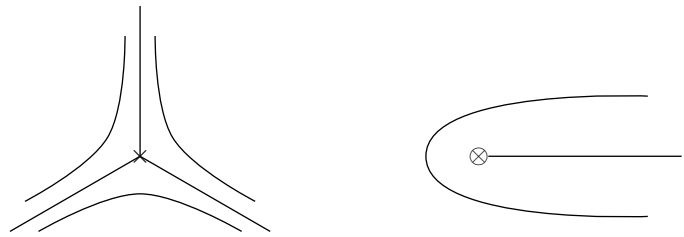

Figure \ref{fig:examples-of-Stokes-graphs} 
depicts examples of Stokes graphs on ${\mathbb P}^{1}$.
From each turning point (resp., simple pole) 
{\em three} Stokes curves (resp., {\em one} Stokes curve)
emanate. Trajectories behave as in Figure \ref{fig:foliation1} 
near a turning point and a simple pole. 
In what follows, we use the symbols $\times$ and $\otimes$
for a turning point and a simple pole, respectively. 
Also, the symbol $\bullet$ is used to represent a point in $P_{\infty}$.

\begin{defn}
\begin{itemize}
\item %
A Stokes curve connecting points $a$ and $b$ 
($a, b \in P_{0}\cup P_{\rm s}$)
is called a {\em Stokes segment}. %
\item %
If the Stokes graph does not contain any Stokes segment, 
the Stokes graph (or the quadratic differential $\phi$)
is said to be {\em saddle-free}. 
\end{itemize}
\end{defn}
A Stokes segment is nothing but a {\em saddle trajectory} of $\phi$ 
(see \cite{Strebel84}). Typical examples of Stokes segments 
are shown in Figure \ref{fig:Stokes-segments}: 
\begin{itemize}
\item %
A Stokes segment of {\em type I} connects two distinct turning points 
$a_1$ and $a_2$. Figure \ref{fig:examples-of-Stokes-graphs} (e) depicts 
an example of a type I Stokes segment. 
\item %
A Stokes segment of {\em type II} connects a simple pole $s$
and a turning point $a$. Figure \ref{fig:examples-of-Stokes-graphs} (h) 
depicts an example of a type II Stokes segment. 
\item %
A Stokes segment of {\em type III} forms a closed loop whose 
end point is a turning point $a$. 
It appears around a double pole $p$ of $\phi$. 
\end{itemize}

\begin{figure}
\begin{center}
\begin{pspicture}(-2,-1.5)(9.,1.2)
\psset{linewidth=0.5pt}
\psset{fillstyle=solid, fillcolor=black}
\psset{fillstyle=none}
\psline(-0.6,0)(0.6,0)
\psline(-0.6,0)(-1,0.8)
\psline(-0.6,0)(-1,-0.8)
\psline(0.6,0)(1,0.8)
\psline(0.6,0)(1,-0.8)
\rput[c]{0}(-0.6,0){$\times$}
\rput[c]{0}(0.6,0){$\times$}
\rput[c]{0}(-0.45,-0.3){$a_1$}
\rput[c]{0}(0.47,-0.3){$a_2$}
\rput[c]{0}(0,-1.3){type I}
\psset{fillstyle=solid, fillcolor=black}
\psset{fillstyle=none}
\rput[c]{0}(2.7,0){$\otimes$}
\rput[c]{0}(3.9,0){$\times$}
\rput[c]{0}(2.7,-0.3){$s$}
\rput[c]{0}(3.8,-0.3){$a$}
\rput[c]{0}(3.4,-1.3){type II}
\psline(2.83,0)(3.9,0)
\psline(3.9,0)(4.3,0.8)
\psline(3.9,0)(4.3,-0.8)
\psset{fillstyle=solid, fillcolor=black}
\psset{fillstyle=none}
\rput[c]{0}(6.1,0){$\bullet$}
\rput[c]{0}(7.1,0){$\times$}
\rput[c]{0}(6.1,-0.3){$p$}
\rput[c]{0}(7.1,-0.3){$a$}
\rput[c]{0}(6.6,-1.3){type III}
\psline(7.1,0)(7.6,0)
\pscurve(7.1,0)(6.6,0.4)(6.0,0.5)(5.6,0)
(6.0,-0.5)(6.6,-0.4)(7.1,0)
\end{pspicture}
\end{center}
\caption{Examples of Stokes segments. Each figure designates 
a part of the Stokes graph.}
\label{fig:Stokes-segments}
\end{figure}
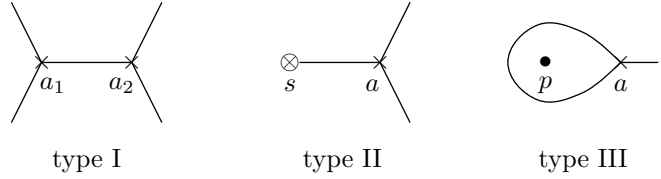

\begin{thm}
[\cite{Koike-Schafke}]\label{thm:summability}
Suppose that the potential $Q(z,\eta)$ of \eqref{eq:Sch} satisfies 
Assumptions \ref{ass:zeros-and-poles} and \ref{ass:trajectory}. 
If a path $\beta$ on $\Sigma\setminus (P_{0} \cup P_{\rm s})$ 
{\em never} intersects with any Stokes segment in the Stokes graph, 
then the formal power series 
$\int_{\beta} S_{\rm odd}^{\rm reg}(z,\eta) dz$ is Borel summable.  
Moreover, if the Stokes graph is {\em saddle-free}, 
then the WKB solutions are Borel summable
on each Stokes region. The Borel sum of the WKB solutions 
give analytic solutions of the Schr{\"o}dinger equation \eqref{eq:Sch}, 
and they are asymptotically expanded to the original WKB solutions 
when $\eta \rightarrow +\infty$. 
\end{thm}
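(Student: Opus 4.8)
The plan is to prove Borel summability by controlling the singularities of the Borel transform in the Borel ($y$-)plane and then establishing the required exponential bounds, reducing the global statement to exact local models via the saddle-free structure. The starting point is that $S_{\rm odd}(z,\eta)$ is built from the Riccati equation \eqref{eq:Riccati}, which upon Borel transformation in $\eta$ becomes a convolution (integro-differential) equation for the Borel transform of $S_{\rm odd}^{\rm reg}\,dz$. Solving this equation by successive approximation, one checks inductively that the only singularities produced in the $y$-plane are located at the action values
\[
y = 2\int_{a}^{z}\sqrt{Q_0(z)}\,dz
\]
for turning points and simple poles $a$ reachable from $z$, together with the sums of such values generated by the convolution product. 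The imaginary part of such an action value vanishes precisely along the Stokes curves emanating from $a$, so the hypothesis that $\beta$ meets no Stokes segment guarantees that no singularity lands on the positive real axis as $z$ traverses $\beta$; this is the geometric content of the no-crossing condition. I would carry this out by tracking, for each point of $\beta$, the nearest Borel singularity and verifying it stays off the ray $\mathrm{Re}(y)\ge 0$, $\mathrm{Im}(y)=0$.

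Second, I would establish analytic continuability of the Borel transform past the endpoint singularity at $y=\mp{\mathfrak s}(z)$, together with at-most-exponential growth as $|y|\to\infty$ on a neighborhood of the positive real axis. Away from the singular locus of $\phi$ this follows from the convolution estimates, but near the singular points the argument must be anchored to exact models: at a simple turning point one compares with the Airy equation, whose WKB Borel summability is classical; at a simple pole with the Bessel-type model governing Koike's connection formula \cite{Koike00}; and at poles of order $\ge 3$ with the models permitted by Assumption \ref{ass:zeros-and-poles}. Proposition \ref{prop:Sodd-reg} ensures that subtracting the principal term renders $S_{\rm odd}^{\rm reg}\,dz$ integrable at the poles of order $\ge 2$, so that the Laplace integral is well-defined up to these points. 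The exponential bound is then propagated along $\beta$ by a Gronwall-type inequality applied to the successive-approximation series.

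Third, for the second assertion I would use that when $\phi$ is saddle-free each Stokes region is a horizontal strip or half-plane domain foliated by trajectories, on which $\mathrm{Im}\int^{z}\sqrt{Q_0}\,dz$ is monotone; consequently any two points of a region are joined by a path meeting no Stokes segment, and the first part applies. Combining the Borel summability of $\int_\beta S_{\rm odd}^{\rm reg}\,dz$ with that of the prefactor $S_{\rm odd}^{-1/2}$ and of the principal exponential (summable because its exponent is linear in $\eta$), and using that products and exponentials of Borel-summable series with compatible singularity directions remain Borel summable, yields Borel summability of $\psi_\pm$ on each Stokes region. That the Borel sums solve \eqref{eq:Sch} follows because the Laplace transform intertwines the formal WKB construction with the differential operator, and the asymptotic expansion is Watson's lemma applied to \eqref{eq:Borel sum}.

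The main obstacle will be the uniform control of the Borel transform near the singular points of $\phi$, especially the simple poles, where the local exponents are non-integer and the naive convolution estimates degenerate; handling this requires the precise local normal form behind Koike's connection formula and a matching argument gluing the local Borel-summable models to the global convolution solution. A secondary difficulty is verifying that the singularity-tracking of the first step is robust under the full $z$-dependence of the moving singularities, i.e.\ that no singularity arising from multi-fold convolutions crosses the contour even when the primary ones do not.
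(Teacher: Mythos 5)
The paper does not actually prove Theorem \ref{thm:summability}: it is imported wholesale from \cite{Koike-Schafke} (a reference listed as ``in preparation''), and the only argument the paper itself supplies is the observation in the remark following the theorem that the Koike--Sch\"afke proof uses only local properties within a Stokes region, so that the result extends from $\Sigma=\bbP^1$ to a general compact Riemann surface. There is therefore no internal proof to compare your sketch against; what you have written is an outline of the kind of Borel-plane analysis that the cited work carries out, and it cannot be judged ``the same as'' or ``different from'' anything in this paper.

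Taken on its own terms, your outline has one genuine logical gap worth flagging. In your first step you assert that the singularities of the Borel transform sit at $y=2\int_a^z\sqrt{Q_0}\,dz$, that the imaginary part of such a value vanishes precisely along the Stokes curves emanating from $a$, and that therefore the hypothesis ``$\beta$ meets no Stokes \emph{segment}'' keeps all singularities off the positive real axis. This conflates Stokes curves with Stokes segments. In a saddle-free Stokes graph a path $\beta$ will generically cross many ordinary Stokes curves, and at each such crossing the movable singularity $y=\pm 2\mathfrak{s}(z)$ does land on the real axis; this is exactly the mechanism behind the connection formulas of Theorems \ref{thm:Voros-formula} and \ref{thm:Koike-formula} and does not obstruct summability of the \emph{integrated} quantity $\int_\beta S_{\rm odd}^{\rm reg}\,dz$. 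The singularities that actually matter for the Voros coefficient are the fixed ones located at the actions $\oint_{\gamma_0}\sqrt{Q_0}\,dz$ of saddle trajectories, and it is these that are real precisely when $\beta$ crosses a Stokes segment; your argument needs to separate the movable endpoint singularities (harmless for the path integral, relevant only for the pointwise WKB solution at $z$ in the open Stokes region) from the fixed action singularities (the ones the no-Stokes-segment hypothesis controls). Beyond that, the analytically hard content --- uniform analytic continuation and exponential bounds for the continued Borel transform near turning points, simple poles, and higher-order poles --- is deferred in your sketch to ``Gronwall-type'' and ``matching'' arguments; that is precisely the part supplied by \cite{Koike-Schafke} and not reproducible from the material in this paper.
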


\begin{rem}
Theorem \ref{thm:summability} was proved in \cite{Koike-Schafke} 
when the underlying Riemann surface $\Sigma$ is the 
Riemann sphere ${\mathbb P}^1$. Since the proof by 
\cite{Koike-Schafke} only use local properties in 
a Stokes region, the statement can be generalized to 
the case where $\Sigma$ is a general compact Riemann surface.
\end{rem}

As is inferred from Theorem \ref{thm:summability}, the existence 
of a Stokes segment may break the Borel summability of 
the WKB solutions. Furthermore, a Stokes segment causes a ``jump"
for the WKB solutions etc. (See \cite[Section 7]{Voros83}; 
see also Section \ref{section:stokes-auto} below.)

\subsection{Connection formulas on Stokes curves}
\label{section:connection-formula}

In this subsection we assume that the Stokes graph $G$ is saddle-free. 
Then, by Theorem \ref{thm:summability}, we have the Borel sums of 
the WKB solutions on each Stokes region. 
Here we discuss {\em connection formulas} describing the relations 
between the Borel sums given in adjacent Stokes regions. 
The connection formulas will be used 
in the proof of Theorem \ref{thm:Stokes-auto-II} below. 

First, we recall {Voros' connection formula} on a Stokes curve 
emanating from a {\em turning point}. 
Let $a \in P_{0}$ be a turning point, and $C$ be a Stokes curve 
emanating from $a$. Take any two points $z_1$ and $z_2$ near $C$ 
satisfying the following conditions 
(see Figure \ref{fig:conection-problem1} (a)):
\begin{itemize}
\item %
$z_1$ and $z_2$ are contained in some Stokes regions $D_1$ and $D_2$, 
respectively. The Stokes curve $C$ is a common boundary of $D_1$ 
and $D_2$.
\item %
$D_2$ comes next to $D_1$ in the anticlockwise direction
with the reference point $a$. 
\item %
The line segment connecting $z_1$ and $z_2$ intersects with 
the Stokes graph at exactly one point $z_0$ which lies on $C$. 
\end{itemize}
Take the WKB solutions {\em normalized at the turning point $a$} 
(see \cite[Section 2]{Kawai05}):
\begin{equation} \label{eq:WKB-TP-normalized}
\psi_{\pm,a}(z,\eta) = \frac{1}{\sqrt{S_{\rm odd}(z,\eta)}}
\exp\left(\pm\int_{a}^{z} S_{\rm odd}(z,\eta)dz\right).
\end{equation}
Here we assume that the integral in \eqref{eq:WKB-TP-normalized} 
is defined for $z$ which lies in a small neighborhood 
of $z_1$ or $z_2$, and the path from $a$ to $z$ is given by 
a composition of the following two paths; 
one is the path from $a$ to $z_0$ along $C$, and the other one 
is the straight path from $z_0$ to $z$. 
In order to fix the normalization \eqref{eq:WKB-TP-normalized}
completely, we fix a branch of the square root $\sqrt{Q_0(z)}$ 
on $C$ after taking a branch cut as indicated 
Figure \ref{fig:conection-problem1} (a).
For $j = 1, 2$, let $\Psi_{\pm,a}^{D_j}$ be the Borel sum of 
$\psi_{\pm,a}$ defined in a neighborhood of $z_j$ 
included in the Stokes region $D_j$. We also denote by 
the same symbol $\Psi_{\pm,a}^{D_j}$ its analytic continuation 
to the whole $D_j$. Under the situation, we have the following
connection formula. 

\begin{thm}
[{\cite[Section 6]{Voros83}, \cite[Section 2]{Aoki91}}]
\label{thm:Voros-formula}
The analytic continuations of $\Psi_{\pm,a}^{D_1}$ to $D_2$
across the Stokes curve $C$ satisfy one of 
the following equations:
\begin{align}
\label{eq:Voros-formula-1} ~ \\ \nonumber
(i) : 
\begin{cases} 
\Psi^{D_1}_{+,a} = \Psi^{D_2}_{+,a} + i \Psi^{D_2}_{-,a} \\ 
\Psi^{D_1}_{-,a} = \Psi^{D_2}_{-,a}.
\end{cases}
\quad
(ii) : 
\begin{cases} 
\Psi^{D_1}_{+,a} = \Psi^{D_2}_{+,a}  \\ 
\Psi^{D_1}_{-,a} = \Psi^{D_2}_{-,a} + i \Psi^{D_2}_{+,a}.
\end{cases}
\end{align}
Here the case (i) occurs when 
${\rm Re}(\int_a^z \sqrt{Q_0(z)}dz) > 0$ on $C$, while 
the case (ii) occurs when 
${\rm Re}(\int_a^z \sqrt{Q_0(z)}dz) < 0$ on $C$.
\end{thm}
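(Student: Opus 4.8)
The plan is to reduce the connection problem to the canonical Airy equation and then to read off the jump from the explicit Stokes structure of the Airy function. First I would invoke the \emph{exact WKB transformation theory} near a simple turning point (see \cite[Section 2]{Kawai05}, \cite{Aoki91}): since $a \in P_0$ is a simple zero of $Q_0$, there is a formal coordinate transformation $z \mapsto w(z,\eta)$ together with a gauge factor carrying \eqref{eq:Sch} into the Airy equation $(d^2/dw^2 - \eta^2 w)u = 0$ in a neighborhood of $a$, with $a$ mapped to the Airy turning point $w=0$. The transformation is a formal power series in $\eta^{-1}$ whose construction is compatible with Borel resummation, so that it intertwines the Borel sums $\Psi_{\pm,a}^{D_j}$ with the corresponding Borel-summed Airy WKB solutions normalized at $w=0$. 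The three Stokes curves emanating from $a$ correspond to the three Stokes lines of Airy, and the two regions $D_1, D_2$ to adjacent Airy sectors.

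Second I would establish the formula in the Airy case directly. The WKB solutions of the Airy equation normalized at $w=0$ are Borel summable on each sector by Theorem \ref{thm:summability}, and their Borel sums are, up to the elementary prefactor $1/\sqrt{S_{\rm odd}}$, expressible through the classical Airy function and its rotations, whose Stokes phenomenon is explicit: crossing one Stokes line in the anticlockwise sense adds $i$ times the solution that is \emph{recessive} there to the one that is \emph{dominant}, leaving the recessive one unchanged. Reading off the asymptotics $\psi_{\pm,a} \sim \exp(\pm \eta \int_a^z \sqrt{Q_0}\, dz)$ and recalling that the action $\int_a^z \sqrt{Q_0}\, dz$ is real on $C$, its sign determines which of $\psi_{+,a}, \psi_{-,a}$ is recessive. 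This yields precisely case (i) when ${\rm Re}(\int_a^z \sqrt{Q_0}\, dz) > 0$ (then $\psi_{-,a}$ is recessive and unchanged while $\psi_{+,a}$ jumps by $i\Psi^{D_2}_{-,a}$) and case (ii) in the opposite sign, matching the anticlockwise convention fixed in the statement.

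Finally I would transport the Airy connection formula back through the transformation of the first step. Because the transformation respects both the normalization at the turning point and the Borel sums, and because the common gauge factor and coordinate change act identically on $\psi_{+,a}$ and $\psi_{-,a}$, the linear relations between the Borel sums are preserved verbatim, giving \eqref{eq:Voros-formula-1}. An alternative route avoiding the canonical form is to analyze directly the singularity of the Borel transform $\psi_{\pm,B}$ nearest the positive real axis, whose location is governed by the action difference $2\int_a^z \sqrt{Q_0}\, dz$, and to compute the discontinuity of the Laplace integral as this singularity crosses the contour upon passing from $D_1$ to $D_2$.

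The main obstacle is the first step: rigorously constructing the transformation to the Airy canonical form at the level of Borel sums, i.e.\ proving that the formal transformation series is Borel summable in a full neighborhood of $a$ and that it genuinely intertwines the analytic Borel sums and not merely the formal WKB series. This is the substantive analytic input due to Aoki, Kawai, and Takei. The remaining bookkeeping — matching the anticlockwise orientation, fixing the branch of $\sqrt{Q_0}$ across the branch cut as in the figure, and identifying the recessive solution with the correct sign of ${\rm Re}\int_a^z \sqrt{Q_0}\, dz$ — is routine once the reduction is in place.
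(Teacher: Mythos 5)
Your proposal follows essentially the same route that the paper attributes to \cite{Aoki91} and sketches in Remark 2.14: a formal local transformation to the Airy canonical equation near the simple turning point, made rigorous by the Borel summability of the transformation series (the input from \cite{Kamimoto11}), followed by reading off the explicit Airy Stokes structure. You correctly identify the Borel summability of the intertwining transformation as the substantive analytic step, so the outline is sound and consistent with the paper's (cited) proof.
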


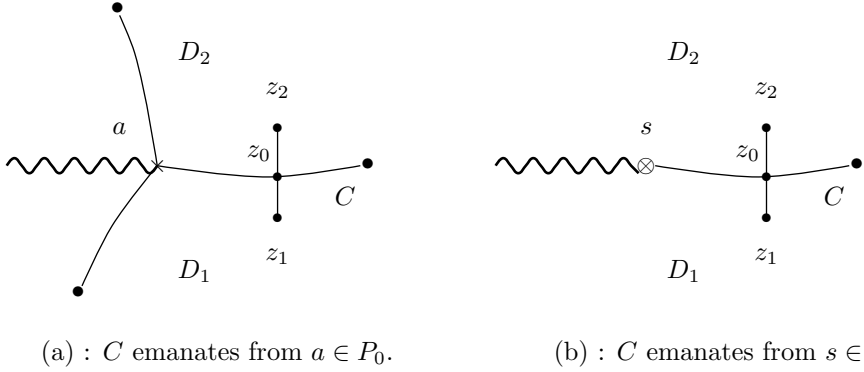
\begin{figure}
\begin{center}
\begin{pspicture}(0,-2.5)(12,3)
%
%
\psset{fillstyle=solid, fillcolor=black}
\psset{fillstyle=none}
\rput[c]{0}(2,0){\small $\times$}
\rput[c]{0}(4.5,-0.4){$C$}
\rput[c]{0}(1.5,0.5){$a$}
\rput[c]{0}(2.5,-1.4){$D_{1}$}
\rput[c]{0}(2.5,1.5){$D_{2}$}
\rput[c]{0}(4.8,0.02){$\bullet$}
\rput[c]{0}(1.47,2.1){$\bullet$}
\rput[c]{0}(0.95,-1.68){$\bullet$}
\rput[c]{0}(3.6,-0.7){\footnotesize $\bullet$}
\rput[c]{0}(3.6,+0.5){\footnotesize $\bullet$}
\rput[c]{0}(3.6,-0.15){\footnotesize $\bullet$}
\rput[c]{0}(3.6,-1.2){$z_1$}\rput[c]{0}(3.6,+1.0){$z_2$}
\rput[c]{0}(3.35,0.17){$z_0$}
\psset{linewidth=0.5pt}
\psline(3.6,-0.7)(3.6,+0.5)
\rput[c]{0}(2.8,-2.5)
{(a) : $C$ emanates from $a \in P_0$.}
\rput[c]{0}(8.5,0){\small $\otimes$}
\rput[c]{0}(11,-0.4){$C$}
\rput[c]{0}(8.5,0.5){$s$}
\rput[c]{0}(9.0,-1.4){$D_{1}$}
\rput[c]{0}(9.0,1.5){$D_{2}$}
\rput[c]{0}(11.3,0.02){$\bullet$}
\rput[c]{0}(10.1,-0.7){\footnotesize $\bullet$}
\rput[c]{0}(10.1,+0.5){\footnotesize $\bullet$}
\rput[c]{0}(10.1,-0.15){\footnotesize $\bullet$}
\rput[c]{0}(10.1,-1.2){$z_1$}\rput[c]{0}(10.1,+1.0){$z_2$}
\rput[c]{0}(9.85,0.17){$z_0$}
\psset{linewidth=0.5pt}
\psline(10.1,-0.7)(10.1,+0.5)
\rput[c]{0}(9.6,-2.5)
{(b) : $C$ emanates from $s \in P_{\rm s}$.}
\psset{linewidth=1pt}
\pscurve(2,0)(1.9,-0.1)(1.8,0)(1.7,0.1)(1.6,0)(1.5,-0.1)(1.4,0)
(1.3,0.1)(1.2,0)(1.1,-0.1)(1,0)(0.9,0.1)(0.8,0)(0.7,-0.1)(0.6,0)
(0.5,0.1)(0.4,0)(0.3,-0.1)(0.2,0)(0.1,0.1)(0,0)
\pscurve(8.4,-0.1)(8.3,0)(8.2,0.1)(8.1,0)(8,-0.1)(7.9,0)
(7.8,0.1)(7.7,0)(7.6,-0.1)(7.5,0)(7.4,0.1)(7.3,0)(7.2,-0.1)(7.1,0)
(7,0.1)(6.9,0)(6.8,-0.1)(6.7,0)(6.6,0.1)(6.5,0)
\psset{linewidth=0.5pt}
\pscurve(2,0)(3.5,-0.15)(4.7,0)
\pscurve(2,0)(1.7,1.5)(1.5,2)
\pscurve(2,0)(1.4,-0.8)(1,-1.6)
\pscurve(8.62,0)(10,-0.15)(11.2,0)
%
\end{pspicture}
\end{center}
\caption{Connection problems on Stokes curves. 
The wiggly lines designate branch cuts to define the 
branch of $\sqrt{Q_0(z)}$.} 
\label{fig:conection-problem1}
\end{figure}

A similar connection formula on a Stokes curve emanating from 
a {\em simple pole} is discovered by Koike. 
Suppose that we have the same situation as above after replacing 
``a turning point $a$" to ``a simple pole $s$" 
as indicated in Figure \ref{fig:conection-problem1} (b). 
Take the WKB solutions {\em normalized at the simple pole $s$} 
(\cite[Section 2]{Koike00})
\begin{equation}
\psi_{\pm,s}(z,\eta) = \frac{1}{\sqrt{S_{\rm odd}(z,\eta)}}\exp\left(
\pm\int_{s}^{z} S_{\rm odd}(z,\eta)dz\right), 
\end{equation}
and define their Borel sums $\Psi^{D_j}_{\pm,s}$ defined on $D_j$ 
($j = 1, 2$), by the same manner as above. 
Then, we have the following connection formula.
 
\begin{thm}
[{\cite[Theorem 2.1]{Koike00}}]
\label{thm:Koike-formula}
The analytic continuations of $\Psi_{\pm,a}^{D_1}$ to $D_2$
across the Stokes curve $C$ satisfy one of 
the following equations:
\begin{align}
\label{eq:Koike-formula-1} ~ \\ \nonumber
(i) : 
\begin{cases} 
\Psi^{D_1}_{+,s} = \Psi^{D_2}_{+,s} 
+ i (t + t^{-1}) \Psi^{D_1}_{-,s} \\[+.5em] 
\Psi^{D_1}_{-,s} = \Psi^{D_2}_{-,s}.
\end{cases}
\quad
(ii) : 
\begin{cases} 
\Psi^{D_1}_{+,s} = \Psi^{D_2}_{+,s}  \\[+.5em] 
\Psi^{D_1}_{-,s} = \Psi^{D_2}_{-,s} + i (t + t^{-1}) \Psi^{D_2}_{+,s}.
\end{cases}
\end{align}
Here 
\begin{equation} \label{eq:t}
t = t(s) = \exp (\pi i \sqrt{1+4b(s)}), \quad
b(s) = \lim_{z\rightarrow s}((z-s)^2 Q_2(z)),
\end{equation}
and the case (i) occurs when 
${\rm Re}(\int_s^z \sqrt{Q_0(z)}dz) > 0$ on $C$, while 
the case (ii) occurs when 
${\rm Re}(\int_s^z \sqrt{Q_0(z)}dz) < 0$ on $C$.
\end{thm}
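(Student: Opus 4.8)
The plan is to imitate the proof of Voros' connection formula (Theorem \ref{thm:Voros-formula}), in which the equation near a turning point is reduced to the Airy equation and the connection coefficient $i$ is read off from the Stokes phenomenon of the Airy function; here the role of the Airy equation is played by the (modified) Bessel equation, and the factor $i(t+t^{-1})$ should emerge from the Bessel connection relations. First I would localize at the simple pole, setting $s=0$ and writing $Q_0(z)=c/z+O(1)$ and, by Assumption \ref{ass:zeros-and-poles}, $Q_2(z)=b/z^2+O(z^{-1})$ with $b=b(s)$. Since $\sqrt{Q_0}$ has a square-root branch point at $s$, the natural WKB variable is $\int_s^z\sqrt{Q_0}\,dz\sim 2\sqrt{c}\sqrt{z}$, so I would pass to the coordinate $x$ with $z=x^2$. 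Under \eqref{eq:transformation-of-potentail} the leading potential $\tilde{Q}_0$ becomes the constant $4c$, while the double-pole part of $Q_2$ together with the Schwarzian correction $-\tfrac12\eta^{-2}\{z;x\}$ combine into a single $x^{-2}$ term; a direct computation gives
\begin{equation}
\tilde{Q}(x,\eta)=4c+\eta^{-2}\,\frac{\nu^2-1/4}{x^2}+(\text{lower order}),\qquad \nu=\sqrt{1+4b(s)},
\end{equation}
so that the transformed equation is, to leading order, a modified Bessel equation of order $\nu$. This already explains the appearance of $\nu=\sqrt{1+4b(s)}$ in the definition \eqref{eq:t} of $t$, since $t=e^{\pi i\nu}$.

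Next I would upgrade this leading reduction to an exact one. Following the WKB-theoretic transformation theory of Aoki--Kawai--Takei, I would construct a formal coordinate change $x=x(z,\eta)=x_0(z)+\eta^{-1}x_1(z)+\cdots$, together with the induced gauge factor, that transforms \eqref{eq:Sch} order by order in $\eta^{-1}$ into the canonical Bessel equation with constant leading potential and a pure $\nu^2$ term, in such a way that $S_{\rm odd}^{\rm reg}(z,\eta)\,dz$ is carried to the corresponding object of the canonical equation. I would then invoke the Borel summability available in a saddle-free neighborhood (Theorem \ref{thm:summability}) to argue that this transformation is compatible with Borel resummation, so that the Borel sums $\Psi_{\pm,s}^{D_j}$ correspond to the Borel sums of the canonical WKB solutions. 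The connection problem for the canonical equation is classical: its WKB solutions are expressed through the modified Bessel functions $I_{\pm\nu}$, $K_\nu$, whose Stokes multipliers across a Stokes curve are governed by the relation $K_\nu=\tfrac{\pi}{2}(I_{-\nu}-I_\nu)/\sin\pi\nu$, producing precisely the factor $2\cos\pi\nu=t+t^{-1}$. Transporting this back through the transformation, and matching the normalization $\int_s^z S_{\rm odd}$ with the Bessel normalization, should yield \eqref{eq:Koike-formula-1}, with the dichotomy (i)/(ii) dictated by the sign of $\mathrm{Re}\int_s^z\sqrt{Q_0}\,dz$ exactly as in Theorem \ref{thm:Voros-formula}.

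The main obstacle is the \emph{exact} (as opposed to merely formal) content of the middle step: one must show that the formal WKB-theoretic transformation near the simple pole is itself Borel summable and that Borel resummation intertwines it with the connection formula of the canonical equation, rather than only matching asymptotic expansions as $\eta\to+\infty$. A second, more bookkeeping-type difficulty is the careful tracking of the square-root branch of $\sqrt{Q_0}$ across the branch cut drawn in Figure \ref{fig:conection-problem1}(b), together with the attendant factors of $i$, so that both the coefficient $i(t+t^{-1})$ and the assignment of cases (i) and (ii) come out with the correct signs.
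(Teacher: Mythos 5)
Your proposal is correct and follows essentially the same route as the proof the paper relies on: Theorem \ref{thm:Koike-formula} is quoted from \cite{Koike00}, whose argument is exactly the reduction you describe — a formal WKB-theoretic transformation to a Bessel-type canonical equation at the simple pole (your computation $\nu^2-\tfrac14=4b+\tfrac34$ matches), with the connection coefficient $i(t+t^{-1})=2i\cos\pi\nu$ read off from the classical Bessel relations. The ``main obstacle'' you flag, the Borel summability of the formal transformation series, is precisely the point the paper disposes of by citing \cite{Kamimoto11}, where it is deduced from the Borel summability of the WKB solutions.
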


\begin{rem} \normalfont
Theorems \ref{thm:Voros-formula} and \ref{thm:Koike-formula}
are proved by \cite{Voros83}, \cite{Aoki91} and \cite{Koike00}, 
respectively, in the case that $\Sigma = \bbP^1$ and the potential 
is a rational function. 
The proofs of \cite{Aoki91} and \cite{Koike00} are based on 
a formal local transformation to a certain canonical equation 
near a turning point and a simple pole. 
(We also need the Borel summability of the formal transformation 
series, which follows from the Borel summability of the WKB solutions
as is shown in \cite{Kamimoto11}.)
Since the transformations are constructed locally, 
the same discussion of \cite{Aoki91} and \cite{Koike00}
is applicable to case that $\Sigma$ 
is a general compact Riemann surface.  
Therefore, together with Theorem \ref{thm:summability}, 
Theorems \ref{thm:Voros-formula} and \ref{thm:Koike-formula} 
are valid when $\Sigma$ is a general compact Riemann surface.
\end{rem}

The connection formulas in Theorems \ref{thm:Voros-formula} 
and \ref{thm:Koike-formula} are quite effective for 
the study of global properties of solutions of the Schr{\"o}dinger equation. 
See \cite[Section 3]{Kawai05} for computation of the monodromy 
of a Fuchsian Schr{\"o}dinger equation via the above connection formulas.

\subsection{Voros symbols} \label{section:Voros-symbols}

Here we introduce an important notion, called {\em Voros symbols}. 
Voros symbols are formal series defined as the integral of 
$S_{\rm odd}(z,\eta)dz$ along a path or a cycle in the Riemann surface
$\hat{\Sigma}$ of $\sqrt{\phi}$. The Riemann surface $\hat{\Sigma}$
is a double cover of $\Sigma$ branching at 
odd order zeros and odd order poles of $\phi$.  

Denote by $\hat{P}_0$, $\hat{P}_{\rm s}$ and $\hat{P}_{\infty}$ 
the lift of $P_0$, $P_{\rm s}$ and $P_{\infty}$ on $\hat{\Sigma}$, 
respectively.  We also set 
$\hat{P} = \hat{P}_0 \cup \hat{P}_{\rm s} \cup \hat{P}_{\infty}$ and
\begin{eqnarray} 
\label{eq:H1-path}
H_1(\hat{\Sigma}\setminus(\hat{P}_0\cup\hat{P}_{\rm s}),\hat{P}_{\infty}) 
& := & 
H_1(\hat{\Sigma}\setminus(\hat{P}_0\cup\hat{P}_{\rm s}),\hat{P}_{\infty};\bbZ), \\
H_1(\hat{\Sigma}\setminus\hat{P}) 
& := &
H_1(\hat{\Sigma}\setminus\hat{P};\bbZ).
\end{eqnarray}
Here the one in \eqref{eq:H1-path} is a relative homology group. 
We call an element 
$\beta \in H_1(\hat{\Sigma}\setminus(\hat{P}_0\cup\hat{P}_{\rm s}),
\hat{P}_{\infty})$
and 
$\gamma \in H_1(\hat{\Sigma}\setminus\hat{P})$
a {\em path} and a {\em cycle}, respectively, to distinguish them. 
There is a natural inclusion 
$H_1(\hat{\Sigma}\setminus\hat{P}) \hookrightarrow 
H_1(\hat{\Sigma}\setminus(\hat{P}_0\cup\hat{P}_{\rm s}),\hat{P}_{\infty})$.

\begin{defn}
[{\cite[Section 2.1]{Delabaere93}}]
\begin{itemize}
\item %
Let $\beta \in H_1(\hat{\Sigma}\setminus(\hat{P}_0\cup\hat{P}_{\rm s}),
\hat{P}_{\infty})$ be a path. The formal power series 
\begin{equation} \label{eq:Voros-path}
W_{\beta}(\eta) = \int_{\beta} S^{\rm reg}_{\rm odd}(z,\eta) dz
\end{equation}
is called the {\em Voros coefficient for the path $\beta$}.
(Note that the integral of $S_{\rm odd}^{\rm reg}(z,\eta) dz$
along a path $\beta$ is well-defined by Proposition \ref{prop:Sodd-reg}.)
The formal series $e^{W_\beta(\eta)}$ is called 
the {\em Voros symbol for the path $\beta$}. 
\item %
Let $\gamma \in H_1(\hat{\Sigma}\setminus\hat{P})$ 
be a cycle. The formal series 
\begin{equation} \label{eq:Voros-cycle}
V_{\gamma}(\eta) = \oint_{\gamma} S_{\rm odd}(z,\eta) dz
\end{equation}
is called the {\em Voros coefficient for the cycle $\gamma$}.
The formal series $e^{V_{\gamma}(\eta)}$ is called 
the {\em Voros symbol for the cycle $\gamma$}. 
\end{itemize}
\end{defn}

Theorem \ref{thm:summability} gives a criterion 
for the Borel summability of the Voros symbols. 
\begin{prop}[{\cite{Koike-Schafke}, 
see also \cite[Corollary 2.21]{Iwaki14a}}]
\label{prop:Voros-Borel-sum} 
If a path $\beta \in H_1(\hat{\Sigma}\setminus
(\hat{P}_0\cup\hat{P}_{\rm s}),\hat{P}_{\infty})$ 
(resp., a cycle $\gamma \in H_1(\hat{\Sigma}\setminus\hat{P})$) 
never intersects with a Stokes segment in the Stokes graph $G$, 
then the Voros symbol for the path $\beta$ 
(resp., for the cycle $\gamma$) is Borel summable. 
In particular, if the Stokes graph $G$ is saddle-free, 
all Voros symbols are Borel summable. 
\end{prop}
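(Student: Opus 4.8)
The plan is to deduce this proposition directly from Theorem \ref{thm:summability}, together with the fact that the class of Borel summable series in $\eta^{-1}$ is stable under the exponential map. First consider a path $\beta$ that does not meet any Stokes segment. By \eqref{eq:Voros-path} the Voros coefficient is $W_{\beta}(\eta)=\int_{\beta}S_{\rm odd}^{\rm reg}(z,\eta)\,dz$, and Theorem \ref{thm:summability} asserts precisely that this formal series is Borel summable under Assumptions \ref{ass:zeros-and-poles} and \ref{ass:trajectory}. Since the leading term $\eta\sqrt{Q_0}$ has been removed in \eqref{eq:Sodd-reg}, we have $W_{\beta}(\eta)=O(\eta^{-1})$, so $W_{\beta}$ is an honest formal power series in $\eta^{-1}$ and the Voros symbol $e^{W_{\beta}(\eta)}$ is obtained from it by applying the entire function $\exp$. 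Hence the one substantive step is to invoke the closure property: the exponential (indeed the composition with any entire function) of a Borel summable series is again Borel summable. This is exactly the argument behind \cite[Corollary 2.21]{Iwaki14a}, so I would either cite it or recall its proof, which checks that the Borel transform of $e^{W_{\beta}}$ is built from that of $W_{\beta}$ by convolution exponentials and inherits both the analytic continuation to a neighborhood of $\{\,{\rm Re}(y)\ge 0\,\}$ and the at-most-exponential growth.

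For a cycle $\gamma\in H_1(\hat{\Sigma}\setminus\hat{P})$ not meeting any Stokes segment, I would first split off the leading term. Using $S_{\rm odd}=\eta\sqrt{Q_0}+S_{\rm odd}^{\rm reg}$ and linearity of the integral in \eqref{eq:Voros-cycle},
\[
V_{\gamma}(\eta)=\oint_{\gamma}S_{\rm odd}(z,\eta)\,dz
=\eta\oint_{\gamma}\sqrt{Q_0(z)}\,dz+\oint_{\gamma}S_{\rm odd}^{\rm reg}(z,\eta)\,dz
=\eta\,c_{\gamma}+\widetilde{W}_{\gamma}(\eta),
\]
where $c_{\gamma}=\oint_{\gamma}\sqrt{Q_0}\,dz$ is a constant period and $\widetilde{W}_{\gamma}$ is the Voros coefficient attached to $\gamma$ viewed as a path through the natural inclusion $H_1(\hat{\Sigma}\setminus\hat{P})\hookrightarrow H_1(\hat{\Sigma}\setminus(\hat{P}_0\cup\hat{P}_{\rm s}),\hat{P}_{\infty})$. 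Theorem \ref{thm:summability} shows $\widetilde{W}_{\gamma}$ is Borel summable, whence $e^{\widetilde{W}_{\gamma}}$ is Borel summable by the same closure property. The remaining factor $e^{\eta c_{\gamma}}$ is an exponential prefactor of order $\eta^{+1}$ carrying no $\eta^{-1}$-series content; it does not affect Borel summability, precisely as the WKB exponential $\exp(\pm\eta\int^{z}\sqrt{Q_0})$ is treated in \eqref{eq:WKBsol3}. Thus $e^{V_{\gamma}}=e^{\eta c_{\gamma}}e^{\widetilde{W}_{\gamma}}$ is Borel summable.

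The final ``in particular'' clause is then immediate: if $G$ is saddle-free it contains no Stokes segment by definition, so every path $\beta$ and every cycle $\gamma$ vacuously satisfies the non-intersection hypothesis, and the two cases above apply to all of them.

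I expect the only genuinely nontrivial point to be the stability of the Borel summable class under $\exp$. Borel summability here is a statement about the analytic continuation and growth of the Borel transform, and one must verify that these properties survive the passage from $W_{\beta}$ to $e^{W_{\beta}}$: the Borel transform of a product becomes a convolution, and one checks that a convolution of functions holomorphic near the closed right half-plane with exponential growth is again of this type. This is routine within resurgence theory but is the step requiring input beyond Theorem \ref{thm:summability}, so I would state it carefully or defer to the corresponding result in \cite{Iwaki14a}.
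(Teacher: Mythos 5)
Your argument is correct and is essentially the route the paper takes: Proposition \ref{prop:Voros-Borel-sum} is stated there without proof as a direct consequence of Theorem \ref{thm:summability} applied to the Voros coefficients, combined with the closure of Borel summability under exponentiation established in \cite[Corollary 2.21]{Iwaki14a} (with the cycle case handled by splitting off the classical period $\eta\oint_{\gamma}\sqrt{Q_0}\,dz$, exactly as you do). Your identification of the $\exp$-stability step as the only substantive input beyond Theorem \ref{thm:summability} matches the paper's citation structure.
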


The Borel sums of Voros symbols naturally appear in 
the expression of global connection formula for 
the WKB solutions (\cite[Section 3]{Kawai05}; 
see also Section \ref{section:proof-of-Stokes-auto} below).

\subsection{Mutation of Stokes graphs and jump formula for Voros symbols}
\label{section:stokes-auto}

The saddle-free condition is essential in the above results. 
Now we discuss effects of Stokes segments. 
Let us consider the situation that the Stokes graph 
$G(\phi)$ has a unique Stokes segment $\ell_0$. 
We restrict our discussion to the case that $\ell_0$ is of {\em type II} 
in Figure \ref{fig:Stokes-segments}. (The Stokes segment of type I 
(resp., type III) has already been analyzed by \cite{Delabaere93, Aoki09} 
(resp., by \cite{Aoki14, Iwaki14a}).) 

For the purpose, following \cite[Section 3.6]{Iwaki14a}, let us 
consider the 
{\em $S^1$-family} of potentials of the Schr{\"o}dinger equation 
defined by 
\begin{equation} \label{eq:S1-potentials}
Q^{(\theta)}(z,\eta) = e^{2i\theta}Q(z,e^{i\theta}\eta) \quad 
(\theta \in {\mathbb R}).
\end{equation}
Here $Q(z,\eta) = Q^{(0)}(z,\eta)$ is the original potential 
of \eqref{eq:Sch}. Denote by 
\begin{equation}
\label{eq:G-theta}
G_{\theta} = G(e^{2i\theta}\phi) 
\end{equation}
the Stokes graph of the Schr{\"o}dinger equation 
with the potential \eqref{eq:S1-potentials}. 
Here we assume that the potential \eqref{eq:S1-potentials} 
satisfies Assumption \ref{ass:trajectory} for any $\theta$.

The $S^1$-family \eqref{eq:S1-potentials} gives a {\em reduction} 
of the Stokes segment in the following sense. 
Note that, if a Stokes segment appears in the original
Stokes graph $G_0 = G(\phi)$ and it connects two points 
$a, b \in P_0 \cup P_{\rm s}$, then the equality
\begin{equation}
\int_{a}^{b} \sqrt{Q_0(z)} dz \in {\mathbb R}_{\ne 0}
\end{equation}
holds by the definition \eqref{eq:StokesCurves} of a Stokes curve. 
Hence, for any sufficiently small $\delta>0$, the Stokes segment 
$\ell_0$ is reduced (to two Stokes curves) 
in the Stokes graphs $G_{\pm \delta} = G(e^{\pm 2i \delta} \phi)$, 
and they become saddle-free (c.f., \cite[Section 5]{Bridgeland13}). 
Moreover, the topology of $G_{+\delta}$ and that of $G_{-\delta}$
are different as in Figure \ref{fig:saddle-reduction}. 
We call such a discontinuous change of the topology of 
Stokes graphs (caused by a reduction of a Stokes segment)
the {\em mutation of Stokes graphs}. 
This is a key phenomenon that relates exact WKB analysis 
to cluster algebras. 

\begin{figure}
\begin{center}
\begin{pspicture}(-6.2,-3.8)(6.2,-0.5)
%
%
%
%
%
\rput[c]{0}(0,-1.4){$G_0$}\rput[c]{0}(-4,-1.4){$G_{+\delta}$}
\rput[c]{0}(3.8,-1.4){$G_{-\delta}$}\rput[c]{0}(0,-2.85){$\ell_0$}
\rput[c]{0}(1.2,-1.2){$\bullet$}\rput[c]{0}(1.2,-3.6){$\bullet$}
\rput[c]{0}(-2.8,-1.2){$\bullet$}\rput[c]{0}(-2.85,-3.6){$\bullet$}
\rput[c]{0}(5.15,-1.2){$\bullet$}\rput[c]{0}(5.2,-3.6){$\bullet$}
\rput[c]{0}(-0.7,-2.4){$\otimes$}\rput[c]{0}(0.7,-2.4){$\times$}
\rput[c]{0}(-4.7,-2.4){$\otimes$}\rput[c]{0}(-3.3,-2.4){$\times$}
\rput[c]{0}(3.3,-2.4){$\otimes$}\rput[c]{0}(4.7,-2.4){$\times$}
\psset{linewidth=2.5pt}
\psline(-0.58,-2.4)(0.7,-2.4)
\psset{linewidth=0.5pt}
\psline(0.7,-2.4)(1.2,-1.2)\psline(0.7,-2.4)(1.2,-3.6)
\pscurve(-4.58,-2.4)(-3.5,-2.8)(-2.85,-3.6)
\pscurve(-3.3,-2.4)(-4.9,-2.0)(-5.1,-2.6)(-3.8,-3.1)(-2.9,-3.6)
\psline(-3.3,-2.4)(-2.8,-1.2)\psline(-3.3,-2.4)(-2.8,-3.6)
\pscurve(3.42,-2.4)(4.5,-2.0)(5.15,-1.2)
\pscurve(4.7,-2.4)(3.1,-2.8)(2.9,-2.2)(4.2,-1.7)(5.1,-1.2)
\psline(4.7,-2.4)(5.2,-1.2)\psline(4.7,-2.4)(5.2,-3.6)
\end{pspicture}
\end{center}
\caption{Reduction of a Stokes segment of type II and 
the mutation of Stokes graphs. 
The thick line designates the Stokes segment.}
\label{fig:saddle-reduction}
\end{figure}
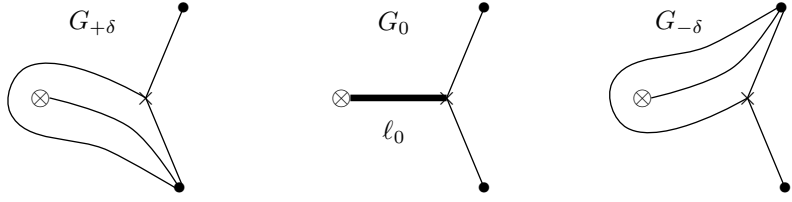

Denote by $e^{W^{(\theta)}_{\beta}} = e^{W^{(\theta)}_{\beta}(\eta)}$ 
and $e^{V^{(\theta)}_{\gamma}} = e^{V^{(\theta)}_{\gamma}(\eta)}$
the Voros symbols for the Schr{\"o}dinger equation with 
the potential \eqref{eq:S1-potentials}. 
As is shown in \cite[Lemma 3.8]{Iwaki14a}, we have
\begin{equation} \label{eq:WV-theta}
e^{W^{(\theta)}_{\beta}(\eta)} = 
e^{W_{\beta}(e^{i \theta}\eta)}, \quad
e^{V^{(\theta)}_{\gamma}(\eta)} = 
e^{V_{\gamma}(e^{i \theta}\eta)},
\end{equation}
where $e^{W_{\beta}(\eta)}$ and $e^{V_{\gamma}(\eta)}$ 
are the Voros symbols of the original equation \eqref{eq:Sch}. 
(Note that the Riemann surfaces $\hat{\Sigma}^{(\theta)}$ 
of $e^{i \theta}\sqrt{\phi}$ are common for all $\theta$. 
Hence we can identify the paths and cycles on $\hat{\Sigma}^{(\theta)}$ 
for different $\theta$ naturally.)
Since the Stokes graphs $G_{\pm \delta}$ are saddle-free,
Proposition \ref{prop:Voros-Borel-sum} implies that 
$e^{W^{(\pm \delta)}_{\beta}}$ and $e^{V^{(\pm \delta)}_{\gamma}}$
are Borel summable for any sufficiently small $\delta>0$. 
As is shown in \cite[Section 3.6]{Iwaki14a}, the limits 
$\delta \rightarrow +0$ of these Borel sums exist, 
and they are given by
\begin{equation} \label{eq:limits-Voros}
\lim_{\delta \rightarrow +0}{\mathcal S}[e^{W^{(\pm \delta)}_{\beta}}](\eta) 
= {\mathcal S}_{\pm}[e^{W_{\beta}}](\eta), 
\quad 
\lim_{\delta \rightarrow +0}{\mathcal S}[e^{V^{(\pm \delta)}_{\gamma}}](\eta)
= {\mathcal S}_{\pm}[e^{V_{\gamma}}](\eta).
\end{equation}
Here ${\mathcal S}_{\pm}[\bullet]$ means the 
{\em Borel sum in the direction $\pm \varepsilon$} 
for a sufficiently small $\varepsilon>0$; 
it is defined as a Laplace integral of the same form as 
\eqref{eq:Borel sum} whose integration path is taken 
along a half line $L_{\pm}$, which has a small angle $\mp \varepsilon$
as indicated in Figure \ref{fig:Borel-plane}. 
The Borel sums ${\mathcal S}_{\pm}[e^{W_{\beta}}]$ and 
${\mathcal S}_{\pm}[e^{V_{\gamma}}]$ give analytic functions 
of $\eta$ defined on $\{\eta \in {\mathbb R} ~|~ \eta \gg 1\}$.

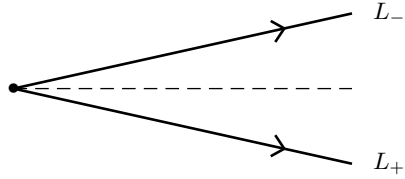
\begin{figure}
\begin{center}
\begin{pspicture}(-0.73,-1.2)(6.7,1.5)
\psset{linewidth = 0.5pt, linestyle = dashed}
\psline(0,0)(4.5,0)
\psset{linewidth=1pt, linestyle = solid}
\psline(0,0)(4.5,1)\psline(3.6,0.8)(3.42,0.9)\psline(3.6,0.8)(3.48,0.62)
\psline(0,0)(4.5,-1)\psline(3.6,-0.8)(3.42,-0.9)\psline(3.6,-0.8)(3.48,-0.62)
\rput[c]{0}(0,0){\small $\bullet$} 
\rput[c]{0}(5.,1){\small $L_{-}$} 
\rput[c]{0}(5.,-1){\small $L_{+}$} 
\end{pspicture}
\end{center}
\caption{Rotated paths of Laplace integral for 
${\mathcal S}_{\pm}[\bullet]$ on $y$-plane. The dashed line 
represents the half line with no angle.} 
\label{fig:Borel-plane}
\end{figure}

What is important here is that, the mutation of Stokes graphs 
yields a discrepancy (or jump) for these Borel sums: 
Namely, ${\mathcal S}_{+}[e^{W_{\beta}}] \ne {\mathcal S}_{-}[e^{W_{\beta}}]$ 
etc.\ may hold as analytic functions on 
$\{\eta \in {\mathbb R} ~|~ \eta \gg 1\}$ 
(c.f., \cite{Voros83, Delabaere93}). 

For the mutation of Stokes graphs relevant to a Stokes segment 
of {\em type I} and {\em type III}, the jump formulas describing 
the relationships between these Borel sums of the Voros symbols 
are known (see \cite{Delabaere93, Aoki14}). 
We find the following formula for a Stokes segment of {\em type II}, 
which is the first main result of this paper.

\begin{thm}
\label{thm:Stokes-auto-II}
Assume that the Stokes graph has a unique 
Stokes segment $\ell_0$, which is of type II, 
connecting a turning point and a simple pole $s$. 
Then, the limits of Borel sums \eqref{eq:limits-Voros} 
of the Voros symbols for any path $\beta$ and any cycle $\gamma$ 
satisfy the following relations as analytic functions of $\eta$ 
on $\{\eta \in {\mathbb R} ~|~ \eta \gg 1\}$: 
\begin{eqnarray} ~ \label{eq:Stokes-auto-II} \\ \nonumber 
\begin{cases}
{\mathcal S}_{-}[e^{{W}_{\beta}}] = {\mathcal S}_{+}[e^{{W}_{\beta}}]
\Bigl( 1 + (t + t^{-1}) {\mathcal S}_{+}[e^{{V}_{\gamma_0}}] + 
{\mathcal S}_{+}[e^{2{V}_{\gamma_0}}] 
\Bigr)^{-\langle\gamma_0,\beta\rangle} \\[+.7em]
{\mathcal S}_{-}[e^{{V}_{\gamma}}] = {\mathcal S}_{+}[e^{{V}_{\gamma}}]
\Bigl( 1 + (t + t^{-1}) {\mathcal S}_{+}[e^{{V}_{\gamma_0}}] + 
{\mathcal S}_{+}[e^{2{V}_{\gamma_0}}] \Bigr)^{-(\gamma_0,\gamma)}.
\end{cases}
\end{eqnarray}
Here 
\begin{itemize}
\item %
$t = t(s)$ is given by \eqref{eq:t}, 
\item %
$\gamma_0 \in H_1(\hat{\Sigma}\setminus\hat{P})$ 
is a cycle which surrounds the Stokes segment $\ell_0$
(see Figure \ref{fig:saddle-classes}) 
whose orientation is given by the condition
\begin{equation} \label{eq:saddle-class}
\oint_{\gamma_0} \sqrt{Q_0(z)} dz \in {\mathbb R}_{<0},
\end{equation}
\item %
$\langle~,~\rangle$ is the intersection form 
on the homology groups:
\begin{equation} \label{eq:intersection-form}
\langle~,~\rangle : H_1(\hat{\Sigma}\setminus\hat{P}) \times 
H_1(\hat{\Sigma}\setminus\hat{P}_{0},\hat{P}_{\infty}) \rightarrow \bbZ,
\end{equation}
normalized as 
\begin{equation}
\langle \text{$x$-axis}, \text{$y$-axis} \rangle = +1,
\end{equation}
\item %
$(~,~)$ is the restriction of $\langle~,~\rangle$ on 
$H_1(\hat{\Sigma}\setminus\hat{P}) \times 
H_1(\hat{\Sigma}\setminus\hat{P})$.
\end{itemize}  
\end{thm}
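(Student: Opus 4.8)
The plan is to adapt the argument used for the type~I and type~III segments in \cite{Iwaki14a} (and, classically, in \cite{Delabaere93}), replacing Voros' formula by Koike's formula (Theorem~\ref{thm:Koike-formula}) at the simple-pole end of $\ell_0$. The discrepancy between $\mathcal{S}_{+}$ and $\mathcal{S}_{-}$ is a local effect supported at $\ell_0$: by \eqref{eq:WV-theta} the rotated Voros symbols are mere $\eta$-rotations of the original ones, and by Proposition~\ref{prop:Voros-Borel-sum} the sole source of a jump is the family of Borel singularities generated by the reducing segment. The transition from $\mathcal{S}_{+}$ to $\mathcal{S}_{-}$ is therefore governed by a single \emph{Stokes automorphism}, which is an automorphism of the algebra generated by the Voros symbols. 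Since it is multiplicative and $W_\beta$, $V_\gamma$ are additive under concatenation of paths and cycles, it suffices to determine its effect on a class meeting $\ell_0$ transversally exactly once; the exponents $-\langle\gamma_0,\beta\rangle$ and $-(\gamma_0,\gamma)$ in \eqref{eq:Stokes-auto-II} then follow from the bilinearity of the intersection form, once the orientation of $\gamma_0$ is fixed by \eqref{eq:saddle-class}. Write $v := \mathcal{S}_{+}[e^{V_{\gamma_0}}]$ and $P := 1 + (t+t^{-1})\,v + v^{2}$; the target is to show that one transverse crossing contributes the scalar factor $P^{-1}$.

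To this end I would first record the local Stokes geometry near $\ell_0$ in the two saddle-free graphs $G_{\pm\delta}$ of Figure~\ref{fig:saddle-reduction}, label the Stokes regions adjacent to the segment, and take the WKB solutions normalized at the turning point $a$ and at the simple pole $s$. By \eqref{eq:limits-Voros} the limits $\mathcal{S}_{\pm}$ are Laplace integrals along the rotated rays $L_{\pm}$ of Figure~\ref{fig:Borel-plane}, so passing from $\mathcal{S}_{+}$ to $\mathcal{S}_{-}$ sweeps the summation direction across the ray aligned with $\ell_0$. Along this sweep the Borel-summed WKB solutions in neighbouring regions are identified by the connection formulas of Theorem~\ref{thm:Voros-formula} at $a$ and of Theorem~\ref{thm:Koike-formula} at $s$, while a passage across the branch cut of $\sqrt{Q_0}$ introduces a diagonal factor recording the period $V_{\gamma_0}$, i.e.\ a power of $v$ in the two-dimensional solution space.

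The heart of the proof is the composition of these identifications around a small loop encircling $\ell_0$. The crossing at $a$ contributes a unipotent Voros factor carrying $i$, the crossing at $s$ contributes Koike's unipotent factor carrying $i(t+t^{-1})$, and the period contributes the diagonal factor above; the claim is that their product collapses to the scalar $(1+t\,v)(1+t^{-1}v) = P$, so that the Voros symbol of a once-crossing class is multiplied by $P^{-1}$. This factorization is exactly the point at which the parameter $t$ of \eqref{eq:t} and the \emph{degree-two} polynomial enter, in contrast with the degree-one factor $1+v$ of the type~I case; the symmetric appearance of $t$ and $t^{-1}$ reflects that a simple pole is a branch point of $\hat{\Sigma}$ with local data $t$. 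Assembling over all crossings via $\langle\gamma_0,\beta\rangle$ and $(\gamma_0,\gamma)$, and matching the sign of the exponent to the orientation \eqref{eq:saddle-class}, yields \eqref{eq:Stokes-auto-II}.

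I expect the main obstacle to be precisely this composition step: verifying that the Voros factor, Koike's factor $i(t+t^{-1})$, and the period factors telescope to $(1+t\,v)(1+t^{-1}v)$ rather than to some other degree-two expression, with all signs and with $t$, $t^{-1}$ entering symmetrically. A subsidiary but delicate difficulty is the bookkeeping of the branch of $\sqrt{Q_0}$ and of the two sheets of $\hat{\Sigma}$ near the branch point $s$, which must be tracked so that the intersection pairings and the sign of the exponent come out correctly. Finally one must justify interchanging the $\delta\to +0$ limits of \eqref{eq:limits-Voros} with this local computation and the Borel summability away from $\ell_0$; this is supplied by Theorem~\ref{thm:summability} and Proposition~\ref{prop:Voros-Borel-sum}, but should be stated with care to make the local-to-global reduction rigorous.
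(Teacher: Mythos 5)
Your proposal is correct and follows essentially the same route as the paper: one compares the single Voros-type crossing in $G_{+\delta}$ with the composition of two Voros-type crossings at the turning point and one Koike-type crossing (Theorem \ref{thm:Koike-formula}) at the simple pole in $G_{-\delta}$, obtains the factor $1+(t+t^{-1})\mathcal{S}_{+}[e^{V_{\gamma_0}}]+\mathcal{S}_{+}[e^{2V_{\gamma_0}}]$ for a once-crossing class, and extends by bilinearity of the intersection pairing. The one step you leave implicit — how the $\mathcal{S}_{+}$ and $\mathcal{S}_{-}$ limits are actually equated — is handled in the paper by renormalizing the WKB solutions at a pole $p\in P_{\infty}$ whose normalization path avoids $\ell_0$, so that their Borel sums are continuous through $\delta=0$ and the jump is read off from the ratio of the connection multipliers.
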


\begin{figure}
\begin{center}
\begin{pspicture}(2,-1)(5.3,1.2)
\psset{linewidth=0.5pt}
\psset{fillstyle=solid, fillcolor=black}
\psset{fillstyle=none}
%
%
%
%
%
\psset{linewidth=0.5pt}
\psset{fillstyle=solid, fillcolor=black}
\psset{fillstyle=none}
\rput[c]{0}(3,0){$\otimes$}
\rput[c]{0}(4.2,0){$\times$}
\rput[c]{0}(3.6,0.7){$\gamma_0$}
%
\psline(3.12,0)(4.2,0)
\psline(4.2,0)(4.7,1)
\psline(4.2,0)(4.7,-1)
\psset{linewidth=1.5pt}
\pscurve(4.2,0)(4.3,0.1)(4.4,0)(4.5,-0.1)(4.6,0)(4.7,0.1)
(4.8,0)(4.9,-0.1)(5.0,0)(5.1,0.1)
\pscurve(2.9,0.1)(2.8,0)(2.7,-0.1)(2.6,0)(2.5,0.1)
(2.4,0)(2.3,-0.1)(2.2,0)(2.1,0.1)
%
\psset{linewidth=1.pt, linecolor=magenta}
\pscurve(2.8,0)(2.9,0.23)(3.6,0.4)(4.3,0.23)(4.4,0)
\psset{linestyle=dashed, linecolor=magenta}
\pscurve(2.8,0)(2.9,-0.23)(3.6,-0.4)(4.3,-0.23)(4.4,0)
\end{pspicture}
\end{center}
\caption{The cycle $\gamma_0$ associated with a Stokes segment 
of type II. The solid part of $\gamma_0$ lies on 
the first sheet of $\hat{\Sigma}$, while the dotted part of 
$\gamma_0$ lies on the second sheet of $\hat{\Sigma}$.}
\label{fig:saddle-classes}
\end{figure}
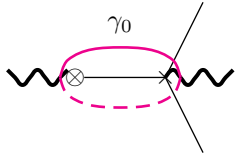

Theorem \ref{thm:Stokes-auto-II} is proved in next subsection. 

\begin{rem} \label{remark:known-results}
Koike and Takei \cite{Koike11} has already derived 
the same formula for the Whittaker equation, 
which is a particular example having a Stokes segment of type II. 
Theorem \ref{thm:Stokes-auto-II} generalizes the result 
of \cite{Koike11} to the general case.
Kamimoto et al. \cite{Kamimoto10} also analyze the singularity 
structure of Borel transform of the WKB solutions 
(via a ``WKB theoretic transformation") when a Stokes segment 
of type II appears. Theorem \ref{thm:Stokes-auto-II}
follows from the result of \cite{Kamimoto10} if their 
formal transformation series is Borel summable (see \cite{Kamimoto11}). 
\end{rem}

\begin{rem} \label{rem:Stokes-Voros} \label{rem:on-Stokes-phenomenon}
The jump \eqref{eq:Stokes-auto-II} for Voros symbols 
can be interpreted as the {\em Stokes phenomenon} 
(in $\eta \rightarrow +\infty$) for the Voros symbols since the both 
${\mathcal S}_{+}[e^{W_{\beta}}]$ and ${\mathcal S}_{-}[e^{W_{\beta}}]$ 
has $e^{W_{\beta}}$ as the asymptotic expansion for 
$\eta \rightarrow +\infty$.
Note that, {\em not} all Voros symbols jump; if a path $\beta$ or 
a cycle $\gamma$ never intersects with the Stokes segment $\ell_0$, 
the jump formula becomes trivial (i.e.,   
${\mathcal S}_{-}[e^{W_{\beta}}] = {\mathcal S}_{+}[e^{W_{\beta}}]$ etc.\/)
since the intersection number of such paths or cycles 
and $\gamma_0$ is equal to $0$.
\end{rem}

\subsection{Proof of Theorem \ref{thm:Stokes-auto-II}}
\label{section:proof-of-Stokes-auto}

The strategy of the proof is similar to the one for 
a type I or type III Stokes segment 
presented in \cite[Appendix A, B]{Iwaki14a}.
Let us consider two connection problems for 
certain WKB solutions indicated in 
Figure \ref{fig:two-connection-problems}.
The first one (A) is the connection problem 
from  the Stokes region $D_1^{+}$ to $D_2^{+}$ in 
the Stokes graph $G_{+\delta}$, while the second one (B)
is the connection problem from  
the Stokes region $D_1^{-}$ to $D_2^{-}$ 
(through the intermediate Stokes regions $D_3^{-}$ and $D_4^{-}$)
in the Stokes graph $G_{-\delta}$ along the thick paths 
depicted in Figure \ref{fig:two-connection-problems}. 
Here, after taking a branch cut as in 
Figure \ref{fig:two-connection-problems},
we have specified the branch of $\sqrt{Q_0(z)}$ so that 
the {\em signs} of Stokes curves are assigned as in 
Figure \ref{fig:two-connection-problems}, where the sign 
is defined as follows. 
To a Stokes curve emanating from $b \in P_{0} \cup P_{\rm s}$ 
in the Stokes graph $G_{\pm \delta}$, 
we assign $\oplus$ (resp., $\ominus$) if the sign of 
${\rm Re}( e^{\pm i\delta} \int_{b}^{z} \sqrt{Q_0(z)} dz)$ 
on the Stokes curve near its end-point is positive (resp., negative). 
We can show the desired formula \eqref{eq:Stokes-auto-II} 
in the same manner as presented here if the signs are assigned in 
the opposite way. We also note that, since the Stokes graph 
$G_{\pm \delta}$ is saddle-free, the point $p$ in Figure 
\ref{fig:two-connection-problems}, which is the end-point of 
a Stokes curve emanating from $a$, is a point in $P_{\infty}$.

\begin{figure}
\begin{center}
\begin{pspicture}(-5.2,-4.3)(2.5,0)
%
\rput[c]{0}(-3.85,-4.3){${\rm (A)} : G_{+\delta}$}
\rput[c]{0}(1.2,-4.3){${\rm (B)} : G_{-\delta}$} 
\rput[c]{0}(-2.75,-1.05){$\oplus$}
\rput[c]{0}(-2.78,-3.73){$\oplus$}
\rput[c]{0}(2.2,-1.05){$\oplus$}
\rput[c]{0}(2.25,-3.7){$\oplus$}
\rput[c]{0}(-4.7,-2.4){$\otimes$}\rput[c]{0}(-3.3,-2.4){$\times$}
\rput[c]{0}(0.3,-2.4){$\otimes$}\rput[c]{0}(1.7,-2.4){$\times$}
\rput[c]{0}(-2.1,-1.9){$D_1^{+}$}\rput[c]{0}(-4.2,-1.0){$D_2^{+}$}
\rput[c]{0}(2.9,-1.9){$D_1^{-}$}\rput[c]{0}(0.9,-0.8){$D_2^{-}$}
\rput[c]{0}(1.2,-2.5){$D_3^{-}$}\rput[c]{0}(1.0,-1.9){$D_4^{-}$}
\rput[c]{0}(-2.95,-2.7){$a$}\rput[c]{0}(2.05,-2.7){$a$}
\rput[c]{0}(-4.7,-2.72){$s$}\rput[c]{0}(0.3,-2.72){$s$}
\rput[c]{0}(-2.4,-0.85){$p$}\rput[c]{0}(2.6,-0.85){$p$}
\psset{linewidth=0.5pt}
\pscurve(-4.58,-2.4)(-3.5,-2.8)(-2.85,-3.6)
\pscurve(-3.3,-2.4)(-4.9,-2.0)(-5.1,-2.6)(-3.8,-3.1)(-2.9,-3.6)
\psline(-3.3,-2.4)(-2.8,-1.2)\psline(-3.3,-2.4)(-2.8,-3.6)
\pscurve(0.42,-2.4)(1.5,-2.0)(2.18,-1.2)
\pscurve(1.7,-2.4)(0.1,-2.8)(-0.1,-2.2)(1.2,-1.7)(2.1,-1.2)
\psline(1.7,-2.4)(2.25,-1.2)\psline(1.7,-2.4)(2.2,-3.6)
\psset{linewidth=1.5pt}
\pscurve(-3.3,-2.4)(-3.2,-2.5)(-3.1,-2.4)(-3.0,-2.3)(-2.9,-2.4)
(-2.8,-2.5)(-2.7,-2.4)(-2.6,-2.3)(-2.5,-2.4)
\pscurve(-4.8,-2.5)(-4.9,-2.4)(-5.0,-2.3)(-5.1,-2.4)
(-5.2,-2.5)(-5.3,-2.4)(-5.4,-2.3)(-5.5,-2.4)(-5.6,-2.5)(-5.7,-2.4)
\pscurve(1.7,-2.4)(1.8,-2.5)(1.9,-2.4)(2.0,-2.3)(2.1,-2.4)
(2.2,-2.5)(2.3,-2.4)(2.4,-2.3)(2.5,-2.4)
\pscurve(0.2,-2.5)(0.1,-2.4)(0,-2.3)(-0.1,-2.4)
(-0.2,-2.5)(-0.3,-2.4)(-0.4,-2.3)(-0.5,-2.4)(-0.6,-2.5)(-0.7,-2.4)
\psset{linewidth=2.5pt}
\psline[arrows=->](-2.6,-1.7)(-3.8,-1.2) 
\psline[arrows=->](2.4,-1.7)(1.1,-1.2) 
\end{pspicture}
\end{center}
\caption{Two connection problems. 
}
\label{fig:two-connection-problems}
\end{figure}
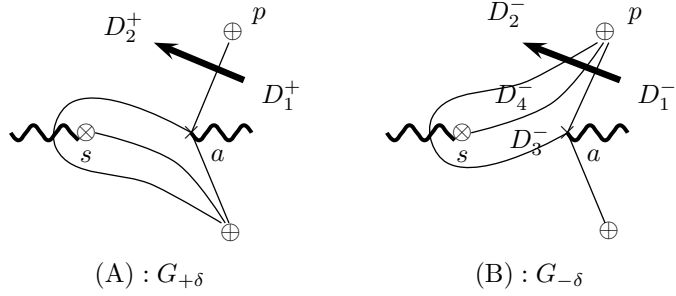

If $\psi_{\pm}(z,\eta)$ are the WKB solutions of \eqref{eq:Sch}, then 
\begin{equation} \label{eq:WKB-theta}
\psi_{\pm}^{(\theta)}(z,\eta) = \psi_{\pm}(z,e^{i\theta}\eta)
\end{equation}
are the WKB solutions for $S^1$-family of Sch{\"o}dinger equations 
with potential \eqref{eq:S1-potentials}. Take the WKB solutions 
\begin{equation} \label{eq:WKB-a}
\psi_{\pm,a}(z,\eta) = 
\frac{1}{\sqrt{S_{\rm odd}(z,\eta)}}
\exp\left(\pm\int_{a}^{z}
S_{\rm odd}(z,\eta) dz\right)
\end{equation}
of \eqref{eq:Sch}, which is normalized at the turning point $a$ 
depicted in Figure \ref{fig:two-connection-problems}.
We will compare the two connection formulas for 
$\psi^{(+\delta)}_{\pm,a}$ 
(in Figure \ref{fig:two-connection-problems} (A))
and for  $\psi^{(-\delta)}_{\pm,a}$ 
(in Figure \ref{fig:two-connection-problems} (B)).

We denote by $\Psi_{\pm,a}^{(\theta), D}$ the Borel sum of 
$\psi^{(\theta)}_{\pm,a}(z,\eta)$ in a Stokes region $D$. 
Using Theorem \ref{thm:Voros-formula}, we have 
the following formula for the first connection problem (A).
\begin{eqnarray}
{\rm (A)} : 
\begin{cases} \label{eq:conn-formula-plus}
\Psi^{(+\delta), D_1^{+}}_{+,a} = \Psi^{(+\delta), D_2^+}_{+,a} 
+ i \Psi^{(+\delta), D_2^+}_{-,a} \\[+.5em] 
\Psi^{(+\delta), D_1^+}_{-,a} = \Psi^{(+\delta), D_2^+}_{-,a}. 
\end{cases}
\end{eqnarray}

On the other hand, in the second connection problem (B)
we have to cross three Stokes curves emanating from 
$a$ and $s$ as in Figure \ref{fig:two-connection-problems}. 
The resulting connection formula is given as follows.
\begin{lem}
\label{lemma:connection2}
In the second connection problem (B) we have
\begin{align} 
~ \label{eq:conn-formula-minus} \\ \nonumber 
{\rm (B)} : 
\begin{cases} 
\Psi^{(-\delta), D_1^-}_{+,a} = \Psi^{(-\delta), D_2^-}_{+,a} 
+ i \left(1 + (t + t^{-1}) {\mathcal S}[e^{V^{(-\delta)}_{\gamma_0}} ]
+ {\mathcal S}[e^{2V^{(-\delta)}_{\gamma_0}} ] \right)
\Psi^{(-\delta), D_2^-}_{-,a} \\[+.5em] 
\Psi^{(-\delta), D_1^-}_{-,a} = \Psi^{(-\delta), D_2^-}_{-,a}.
\end{cases}
\end{align}
\end{lem}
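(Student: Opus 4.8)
The plan is to realize the analytic continuation from $D_1^-$ to $D_2^-$ in Figure \ref{fig:two-connection-problems} (B) as a composition of three elementary crossings: along the thick path the $a$-normalized solutions first cross a Stokes curve emanating from the turning point $a$, then cross the (wrapped) Stokes curve emanating from the simple pole $s$, and finally cross a second Stokes curve emanating from $a$ before entering $D_2^-$, passing through $D_3^-$ and $D_4^-$ on the way. Since $G_{-\delta}$ is saddle-free, Theorem \ref{thm:summability} and Proposition \ref{prop:Voros-Borel-sum} ensure that all the Borel sums $\Psi_{\pm,a}^{(-\delta),D}$ and the Voros symbol $\mathcal{S}[e^{V_{\gamma_0}^{(-\delta)}}]$ are honest analytic functions, so that the computation below is an identity between analytic functions rather than between formal series. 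I record each crossing as a $2\times2$ connection matrix acting on $(\Psi_{+,a},\Psi_{-,a})^{T}$: at the two $a$-curves I apply Voros' formula (Theorem \ref{thm:Voros-formula}) and at the $s$-curve I apply Koike's formula (Theorem \ref{thm:Koike-formula}). The branch cut and the $\oplus$ signs fixed in Figure \ref{fig:two-connection-problems} select case (i) of each formula, so that every connection matrix is upper-unipotent; this already forces the second relation $\Psi_{-,a}^{(-\delta),D_1^-}=\Psi_{-,a}^{(-\delta),D_2^-}$ in \eqref{eq:conn-formula-minus}, and reduces the whole problem to computing the single off-diagonal entry of the product.

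The decisive point is that Koike's formula is written for solutions normalized at $s$, whereas the $a$-crossings use solutions normalized at $a$. I would therefore rewrite the $s$-crossing matrix in the $a$-normalization by conjugating with the diagonal change-of-normalization matrix $\mathrm{diag}(e^{w},e^{-w})$, where $w=\int_a^s S_{\rm odd}\,dz$ is the constant relating the two normalizations. A direct computation shows that the conjugation multiplies the off-diagonal entry $i(t+t^{-1})$ of Koike's matrix by $e^{2w}$, and the key geometric identity is that, because $S_{\rm odd}$ changes sign on the second sheet of $\hat{\Sigma}$, the cycle $\gamma_0$ of Figure \ref{fig:saddle-classes} encircling $\ell_0$ satisfies $\oint_{\gamma_0}S_{\rm odd}\,dz = 2w$; hence $e^{2w}=e^{V_{\gamma_0}}$. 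The orientation of $\gamma_0$ fixed by \eqref{eq:saddle-class} is precisely the one making $\mathrm{Re}\,V_{\gamma_0}<0$ for $\eta\gg1$, so that $\mathcal{S}[e^{V_{\gamma_0}^{(-\delta)}}]$ is exponentially small, which is what one expects of a correction term. In the same way, the local $a$-normalization used at the second $a$-curve differs from the normalization valid on $D_1^-$ by the monodromy accumulated in winding once around the configuration, so that, after conjugating it back to the common normalization, the off-diagonal entry of the second $a$-crossing is multiplied by $e^{2V_{\gamma_0}}$, while the first $a$-crossing stays undressed.

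Multiplying the three upper-unipotent matrices, the off-diagonal entries simply add, and collecting the three dressings gives the off-diagonal entry
\[
i\Bigl(1+(t+t^{-1})\,\mathcal{S}[e^{V_{\gamma_0}^{(-\delta)}}]+\mathcal{S}[e^{2V_{\gamma_0}^{(-\delta)}}]\Bigr),
\]
which is exactly \eqref{eq:conn-formula-minus}; here the three summands are produced respectively by the first $a$-crossing, the $s$-crossing, and the (doubly dressed) second $a$-crossing, and the expected factorization $1+(t+t^{-1})X+X^2=(1+tX)(1+t^{-1}X)$ with $X=\mathcal{S}[e^{V_{\gamma_0}^{(-\delta)}}]$ reflects the two half-monodromies $t^{\pm1}$ of the simple pole supplied by Koike's formula. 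I expect the main obstacle to be exactly this bookkeeping of normalizations: one must verify, keeping track of the branch cut of $\sqrt{Q_0}$ and of the sheet of $\hat{\Sigma}$ on which each portion of the path lies, that the exponential dressings accumulate with the correct signs and powers, namely $e^{0}$, $e^{V_{\gamma_0}}$, and $e^{2V_{\gamma_0}}$ at the three crossings, with the orientation of $\gamma_0$ consistent with \eqref{eq:saddle-class}; a sign error here would replace $e^{2V_{\gamma_0}}$ by $e^{-2V_{\gamma_0}}$ and destroy the required symmetry of the answer.
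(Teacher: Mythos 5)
Your proposal is correct and follows essentially the same route as the paper: the crossing is decomposed into the three elementary connection problems $D_1^-\leadsto D_3^-\leadsto D_4^-\leadsto D_2^-$, each recorded as an upper-unipotent matrix (Voros at the two $a$-curves, Koike at the $s$-curve), and the normalization bookkeeping via $\int_a^s S_{\rm odd}\,dz=\tfrac12 V_{\gamma_0}$ and the rerouted normalization path produces exactly the dressings $1$, $e^{V_{\gamma_0}}$, $e^{2V_{\gamma_0}}$ used in \eqref{eq:conn-formula-minus2} and \eqref{eq:conn-formula-minus3}. The only cosmetic difference is that the paper implements your ``monodromy of the local normalization'' at the third crossing by explicitly introducing the auxiliary solutions $\varphi_{\pm,a}$ normalized along the path $\gamma^{(2)}_z$ of Figure \ref{fig:path-and-cycles} (b) and relating them to $\psi_{\pm,a}$ by $e^{\pm V_{\gamma_0}}$, which is precisely the conjugation you describe.
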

\begin{proof}
Let us describe the connection formulas for WKB solutions 
near the three crossing points. 

\underline{$\bullet$ Connection formula for $D_1^{-} \leadsto D_3^{-}$.}~
Using Theorem \ref{thm:Voros-formula} again, we have 
\begin{eqnarray}
\begin{cases} \label{eq:conn-formula-minus1}
\Psi^{(-\delta), D_1^-}_{+,a} = \Psi^{(-\delta), D_3^-}_{+,a} 
+ i \Psi^{(-\delta), D_3^-}_{-,a} \\ 
\Psi^{(-\delta), D_1^-}_{-,a} = \Psi^{(-\delta), D_3^-}_{-,a}.
\end{cases}
\end{eqnarray}

\underline{$\bullet$ Connection formula for $D_3^{-} \leadsto D_4^{-}$.}~
Here we need to use Theorem \ref{thm:Koike-formula} which describes 
the connection formula on a Stokes curve emanating from the simple pole $s$.
In order to use Theorem \ref{thm:Koike-formula}, 
we need to change the normalization of 
the WKB solutions since Theorem \ref{thm:Koike-formula}
is only valid for the WKB solutions normalized at a simple pole.
For the WKB solution 
\begin{equation} \label{eq:WKB-s}
\psi_{\pm,s}(z,\eta) = 
\frac{1}{\sqrt{S_{\rm odd}(z,\eta)}}
\exp\left(\pm\int_{s}^{z}
S_{\rm odd}(z,\eta)dz\right)
\end{equation}
of \eqref{eq:Sch} normalized at $s$, the following formula 
holds on the Stokes curve in question:
\begin{eqnarray}
\begin{cases} \label{eq:conn-formula-simple-pole}
\Psi^{(-\delta), D_3^-}_{+,s} = \Psi^{(-\delta), D_4^-}_{+,s} 
+ i \hspace{+.1em} (t + t^{-1}) 
\hspace{+.1em} \Psi^{(-\delta), D_4^-}_{-,s} \\[+.5em] 
\Psi^{(-\delta), D_3^-}_{-,s} = \Psi^{(-\delta), D_4^-}_{-,s}.
\end{cases}
\end{eqnarray}
Here $\Psi_{\pm,s}^{(-\delta), D_3^-}$ etc.\ are the Borel sum of 
$\psi^{(-\delta)}_{\pm,s}(z,\eta)$ in the Stokes region $D_3^-$ etc.
Then, using the relation 
\begin{equation}
\psi^{(\theta)}_{\pm, a}(z,\eta)=
\exp\left(\pm\int_{a}^{s}
S_{\rm odd}(z,e^{i\theta}\eta)dz\right)\psi^{(\theta)}_{\pm, s}(z,\eta)  
= e^{\pm \frac{1}{2} V^{(\theta)}_{\gamma_0}(\eta)}
\psi^{(\theta)}_{\pm, s}(z,\eta)
\end{equation}
(here the cycle $\gamma_0$ satisfying \eqref{eq:saddle-class} 
has the orientation as depicted in 
Figure \ref{fig:path-and-cycles} (a), and 
we have used the equality \eqref{eq:WV-theta}), 
the formula \eqref{eq:conn-formula-simple-pole} is translated to 
a formula for the Borel sum of the WKB solutions 
$\psi^{(-\delta)}_{\pm,a}$ as follows:
\begin{eqnarray}
\begin{cases} \label{eq:conn-formula-minus2}
\Psi^{(-\delta), D_3^-}_{+,a} = \Psi^{(-\delta), D_4^-}_{+,a} 
+ i \hspace{+.1em} (t + t^{-1}) {\mathcal S}[e^{V^{(-\delta)}_{\gamma_0}}] 
\hspace{+.1em} \Psi^{(-\delta), D_4^-}_{-,a} \\[+.5em] 
\Psi^{(-\delta), D_3^-}_{-,a} = \Psi^{(-\delta), D_4^-}_{-,a}.
\end{cases}
\end{eqnarray}

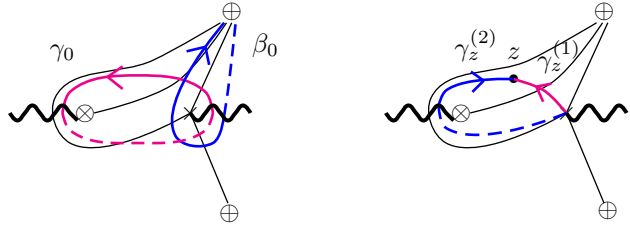
\begin{figure}
\begin{center}
\begin{pspicture}(-5.2,-4.3)(2.5,0)
%
\rput[c]{0}(-4.2,-4.5){(a) The cycle $\gamma_0$ and the path $\beta_0$.}
\rput[c]{0}(1.7,-4.5){(b) The path $\gamma^{(1)}_z$ and $\gamma^{(2)}_z$.}
\rput[c]{0}(-2.75,-1.05){$\oplus$}\rput[c]{0}(-2.78,-3.73){$\oplus$}
\rput[c]{0}(2.2,-1.05){$\oplus$}\rput[c]{0}(2.25,-3.7){$\oplus$}
\rput[c]{0}(-4.7,-2.4){$\otimes$}\rput[c]{0}(-3.3,-2.4){$\times$}
\rput[c]{0}(0.3,-2.4){$\otimes$}\rput[c]{0}(1.7,-2.4){$\times$}
\rput[c]{0}(-5,-1.6){$\gamma_0$}
\rput[c]{0}(-2.3,-1.5){$\beta_0$}
\rput[c]{0}(1,-1.95){\footnotesize $\bullet$}
\rput[c]{0}(1,-1.6){$z$}
\rput[c]{0}(1.6,-1.65){$\gamma^{(1)}_z$}
\rput[c]{0}(0.5,-1.5){$\gamma^{(2)}_z$}
\psset{linewidth=0.5pt}
\pscurve(0.42,-2.4)(1.5,-2.0)(2.18,-1.2)
\pscurve(1.7,-2.4)(0.1,-2.8)(-0.1,-2.2)(1.2,-1.7)(2.1,-1.2)
\psline(1.7,-2.4)(2.25,-1.2)\psline(1.7,-2.4)(2.2,-3.6)
\pscurve(-4.58,-2.4)(-3.5,-2.0)(-2.82,-1.2)
\pscurve(-3.3,-2.4)(-4.9,-2.8)(-5.1,-2.2)(-3.8,-1.7)(-2.9,-1.2)
\psline(-3.3,-2.4)(-2.75,-1.2)\psline(-3.3,-2.4)(-2.8,-3.6)
\psset{linewidth=1.pt, linecolor=magenta}
\psline(-4.4,-1.92)(-4.2,-2.08) \psline(-4.4,-1.92)(-4.2,-1.78)
\pscurve(-5.0,-2.4)(-4.9,-2.12)(-4,-1.9)(-3.1,-2.12)(-3.0,-2.4)
\psset{linewidth=1.pt, linecolor=blue}
\psline(-3.1,-1.52)(-3.05,-1.76) \psline(-3.1,-1.52)(-3.35,-1.58)
\pscurve(-2.85,-2.43)(-3,-2.8)(-3.5,-2.7)(-2.85,-1.2)
\psset{linewidth=1.pt, linecolor=magenta}
\psline(1.3,-2.05)(1.39,-2.25) \psline(1.3,-2.05)(1.5,-2.01)
\pscurve(1.7,-2.4)(1.4,-2.1)(1,-1.95)
\psset{linewidth=1.pt, linecolor=blue}
\psline(0.6,-1.97)(0.45,-2.12) \psline(0.6,-1.97)(0.43,-1.85)
\pscurve(0,-2.3)(0.1,-2.1)(1,-1.95)
\psset{linestyle=dashed}
\psset{linewidth=1.pt, linecolor=magenta}
\pscurve(-5.0,-2.4)(-4.9,-2.63)(-4,-2.8)(-3.1,-2.63)(-3.0,-2.4)
\psset{linewidth=1.pt, linecolor=blue}
\pscurve(-2.7,-1.22)(-2.8,-2.1)(-2.85,-2.43)
\psset{linewidth=1.pt, linecolor=blue}
\pscurve(1.7,-2.4)(0.1,-2.6)(0,-2.3)
\psset{linestyle=solid}
\psset{linewidth=1.5pt, linecolor=black}
\pscurve(-3.3,-2.4)(-3.2,-2.5)(-3.1,-2.4)(-3.0,-2.3)(-2.9,-2.4)
(-2.8,-2.5)(-2.7,-2.4)(-2.6,-2.3)(-2.5,-2.4)
\pscurve(-4.8,-2.5)(-4.9,-2.4)(-5.0,-2.3)(-5.1,-2.4)
(-5.2,-2.5)(-5.3,-2.4)(-5.4,-2.3)(-5.5,-2.4)(-5.6,-2.5)(-5.7,-2.4)
\pscurve(1.7,-2.4)(1.8,-2.5)(1.9,-2.4)(2.0,-2.3)(2.1,-2.4)
(2.2,-2.5)(2.3,-2.4)(2.4,-2.3)(2.5,-2.4)
\pscurve(0.2,-2.5)(0.1,-2.4)(0,-2.3)(-0.1,-2.4)
(-0.2,-2.5)(-0.3,-2.4)(-0.4,-2.3)(-0.5,-2.4)(-0.6,-2.5)(-0.7,-2.4)
\end{pspicture}
\end{center}
\caption{The paths and cycles.}
\label{fig:path-and-cycles}
\end{figure}

\underline{$\bullet$ Connection formula for $D_4^{-} \leadsto D_2^{-}$.}~
Let us introduce another WKB solutions 
\begin{equation}\label{eq:WKB-b}
\varphi_{\pm,a}(z,\eta) = \frac{1}{\sqrt{S_{\rm odd}(z,\eta)}}
\exp\left(\pm\int_{a}^{z}
S_{\rm odd}(z,\eta) dz\right),
\end{equation}
where the path of integration is taken along 
the path $\gamma^{(2)}_z$ depicted in Figure \ref{fig:path-and-cycles} (b). 
(The path for the original WKB solutions $\psi_{\pm,a}(z,\eta)$ is taken 
along the path $\gamma^{(1)}_z$ in Figure \ref{fig:path-and-cycles} (b).)
Then, Theorem \ref{thm:Voros-formula} is valid for the Borel sum 
of $\varphi^{(-\delta)}_{\pm,a}$ on the Stokes curve in question 
(see Section \ref{section:connection-formula}):
\begin{eqnarray}
\begin{cases} \label{eq:conn-formula-minus3-pre}
\Phi^{(-\delta), D_4^-}_{+,a} = \Phi^{(-\delta), D_2^-}_{+,a} 
+ i \hspace{+.1em} \Phi^{(-\delta), D_2^-}_{-,a} \\[+.5em] 
\Phi^{(-\delta), D_4^-}_{-,a} = \Phi^{(-\delta), D_2^-}_{-,a}.
\end{cases}
\end{eqnarray}
Here $\Phi_{\pm,a}^{(-\delta), D_4^-}$ etc.\ are the Borel sum of 
$\varphi^{(-\delta)}_{\pm,a}(z,\eta)$ in the Stokes region $D_4^-$ etc.
Since $\psi^{(\theta)}_{\pm,a}$ and $\varphi^{(\theta)}_{\pm,a}$ 
are related as
\begin{equation}
\psi^{(\theta)}_{\pm, a}(z,\eta) 
= e^{\pm V^{(\theta)}_{\gamma_0}(\eta)}
\varphi^{(\theta)}_{\pm, a}(z,\eta),
\end{equation}
we obtain the following formula for the Borel sum of 
the original WKB solutions $\psi^{(-\delta)}_{\pm, a}$ from 
\eqref{eq:conn-formula-minus3-pre}:
\begin{eqnarray}
\begin{cases} \label{eq:conn-formula-minus3}
\Psi^{(-\delta), D_4^-}_{+,a} = \Psi^{(-\delta), D_2^-}_{+,a} 
+ i {\mathcal S}[e^{2V^{(-\delta)}_{\gamma_0}}] 
\hspace{+.1em} \Psi^{(-\delta), D_2^-}_{-,a} \\[+.5em] 
\Psi^{(-\delta), D_4^-}_{-,a} = \Psi^{(-\delta), D_2^-}_{-,a}.
\end{cases}
\end{eqnarray}

Therefore, combining \eqref{eq:conn-formula-minus1}, 
\eqref{eq:conn-formula-minus2} and \eqref{eq:conn-formula-minus3}, 
we have the connection formula \eqref{eq:conn-formula-minus} 
for the second connection problem (B).
\end{proof}

Next, let us rewrite the formulas 
\eqref{eq:conn-formula-plus} and 
\eqref{eq:conn-formula-minus} to the formulas 
for the WKB solutions 
\begin{equation} \label{eq:WKB-p}
\psi_{\pm,p}(z,\eta) = 
\frac{1}{\sqrt{S_{\rm odd}(z,\eta)}}
\exp\left\{ \pm\left(\eta\int_{a}^{z}
\sqrt{Q_{0}(z)}dz+\int_{p}^z
S_{\rm odd}^{\rm reg}(z,\eta)dz\right) \right\}
\end{equation}
of \eqref{eq:Sch} normalized at $p \in P_{\infty}$, 
which is an end-point of a Stokes curve emanating from $a$ 
as depicted in Figure \ref{fig:two-connection-problems}.
The WKB solutions $\psi^{(\theta)}_{\pm, a}$ and 
$\psi^{(\theta)}_{\pm,p}$ are related as 
\begin{equation} \label{eq:change-of-normalization-app}
\psi^{(\theta)}_{\pm, a}(z,\eta) = 
\exp\left(\pm \int_{a}^{p}
S^{\rm reg}_{\rm odd}(z,e^{i \theta}\eta)dz \right)\psi^{(\theta)}_{\pm,p}
= e^{\pm \frac{1}{2} W^{(\theta)}_{\beta_0}(\eta)}
\psi^{(\theta)}_{\pm,p}(z,\eta),
\end{equation}
where $\beta_0$ is the path designated in Figure 
\ref{fig:path-and-cycles} (a). Therefore, thanks to 
\eqref{eq:change-of-normalization-app},
\eqref{eq:conn-formula-plus} and 
\eqref{eq:conn-formula-minus}, we obtain the following equalities:
\begin{eqnarray} 
\label{eq:conn-formula-p1-plus} ~ \\ \nonumber
& {\rm (A)}: & 
\begin{cases} 
\Psi^{(+\delta), D_1^+}_{+,p} = \Psi^{(+\delta), D_2^+}_{+,p} + 
i {\mathcal S}[e^{-W^{(+\delta)}_{\beta_0}}]
\Psi^{(+\delta), D_2^+}_{-,p} \\[+.5em] 
\Psi^{(+\delta), D_1^{+}}_{-,p} = \Psi^{(+\delta), D_2^{+}}_{-,p}.
\end{cases} \\[+.5em]
\label{eq:conn-formula-p1-minus} ~ \\ \nonumber
& {\rm (B)}: &
\begin{cases} 
\Psi^{(-\delta), D_1^-}_{+,p} = \Psi^{(-\delta), D_2^-}_{+,p}  \\[+.3em] 
\hspace{+2.em}
+ i \left(1 + (t + t^{-1}) {\mathcal S}[e^{V^{(-\delta)}_{\gamma_0}} ]
+ {\mathcal S}[e^{2V^{(-\delta)}_{\gamma_0}} ] \right)
{\mathcal S}[e^{-W^{(-\delta)}_{\beta_0}}]  ~
\Psi^{(-\delta), D_2^-}_{-,p} \\[+.7em] 
\Psi^{(-\delta), D_1^{-}}_{-,p} = \Psi^{(-\delta), D_2^{-}}_{-,p}.
\end{cases}
\end{eqnarray}
Taking $\delta>0$ sufficiently small, 
we may assume that, for a fixed $z_1 \in D_{1}^{+} \cap D_1^{-}$ 
(resp., $z_2 \in D_{2}^{+} \cap D_2^{-}$) the path from $p$ to $z$,
which normalizes the WKB solution \eqref{eq:WKB-p} 
when $z$ lies in a neighborhood of $z_1$ (resp., $z_2$), 
never intersects with the Stokes curves in $G_{\theta}$ 
for any $\theta$ satisfying $-\delta \le \theta \le + \delta$. 
Therefore, Theorem \ref{thm:summability} implies that 
$\psi^{(\theta)}_{\pm, p}$ is Borel summable 
for any $\theta$ satisfying $-\delta \le \theta \le + \delta$. 
Then, the same discussion in the proof of 
\cite[Proposition 2.23, Lemma 3.9]{Iwaki14a} 
shows that  
\begin{equation} \label{eq:no-PSP-formula-app1}
\lim_{\delta \rightarrow +0}
\Psi^{(+\delta), D_1^{+}}_{\pm,p} = 
\lim_{\delta \rightarrow +0}
\Psi^{(-\delta), D_1^{-}}_{\pm,p}
\end{equation}
holds as analytic functions of both $z$ (near $z_1$)
and $\eta$ (for a sufficiently large $\eta > 0$). 
Similarly, 
\begin{equation} \label{eq:no-PSP-formula-app2}
\lim_{\delta \rightarrow +0}
\Psi^{(+\delta), D_2^{+}}_{\pm,p} = 
\lim_{\delta \rightarrow +0}
\Psi^{(-\delta), D_2^{-}}_{\pm,p}
\end{equation}
also holds when $z$ lies near $z_2$. 
Therefore, comparing the connection multipliers 
in \eqref{eq:conn-formula-p1-plus} and 
\eqref{eq:conn-formula-p1-minus} after taking the limit 
$\delta \rightarrow +0$, we obtain 
\begin{equation}\label{eq:Stokes-auto-II-beta0}
{\mathcal S}_{-}[e^{{W}_{\beta_0}}] = 
{\mathcal S}_{+}[e^{{W}_{\beta_0}}]
\Bigl( 1 + (t + t^{-1}) {\mathcal S}_{+}[e^{{V}_{\gamma_0}}] 
+ {\mathcal S}_{+}[e^{2{V}_{\gamma_0}}] \Bigr).
\end{equation}
Here we have used the equality
\begin{equation} \label{eq:no-jump-gamma-0}
{\mathcal S}_{-}[e^{{V}_{\gamma_0}}] = 
{\mathcal S}_{+}[e^{{V}_{\gamma_0}}],
\end{equation}
which follows from the fact that $\gamma_0$ does not intersect with 
the Stokes segment $\ell_0$ (see \cite[Proposition 2.23]{Iwaki14a} 
and Remark \ref{rem:on-Stokes-phenomenon}).
The equality \eqref{eq:Stokes-auto-II-beta0} is the desired 
formula \eqref{eq:Stokes-auto-II} for the path $\beta_0$. 
(Note that $\langle \gamma_0, \beta_0 \rangle = -1$.) 
The formula \eqref{eq:Stokes-auto-II} 
for a general path $\beta$ and a general cycle $\gamma$
can be derived from \eqref{eq:Stokes-auto-II-beta0} by the same manner 
as in the proof of \cite[Theorem 3.4 (Appendix A)]{Iwaki14a}.
This completes the proof of Theorem \ref{thm:Stokes-auto-II}.

\begin{rem}
Since the WKB solutions $\psi_{\pm, a}$ defined by \eqref{eq:WKB-a} 
are normalized along a path which intersects with the Stokes segment 
$\ell_0$, we {\em cannot} expect similar equalities as 
\eqref{eq:no-PSP-formula-app1} and \eqref{eq:no-PSP-formula-app2} 
hold for the Borel sums of $\psi_{\pm, a}$. 
\end{rem}

In our previous paper \cite{Iwaki14a}, we found that 
the jump formulas for Stokes segments of type I and type III
obtained in \cite{Delabaere93, Aoki14} can be formulated as 
a {\em (signed) mutation} in cluster algebras. 
We will see that the jump formula for Voros symbols 
for a type II Stokes segment shown in Theorem \ref{thm:Stokes-auto-II}
is recognized as a {\em (signed) mutation} in 
{\em generalized cluster algebras} in the sense of \cite{Chekhov11} 
in Section \ref{section:main-result}. 


\section{Orbifold triangulations and Stokes graphs}

\subsection{Ideal triangulations of orbifolds}
In this subsection we recall some basic notions
and properties of ideal triangulations of orbifolds 
introduced in \cite{Felikson11}.
Here we concentrate on the orbifold points 
of {\em weight 2\/} therein.

\begin{defn}
Let $\bfS$ be any compact connected oriented surface possibly with boundary.
Let $\bfM$ be a finite set of {\em marked points\/} on $\bfS$ that is
nonempty and includes at least one marked point on each boundary component 
of $\bfS$ and possibly some interior points of $\bfS$.
Let $\bfQ$ be a finite nonempty set of interior points of $\bfS$ such that
$\bfM\cap \bfQ=\emptyset$. The triplet  $\bfO=(\bfS,\bfM,\bfQ)$ 
is called a  {\em bordered orbifold with marked points},
or simply, an {\em orbifold}.
An interior point of $\bfM$ is called a {\em puncture}.
A point of $\bfQ$ is called an {\em orbifold point}.
In figures, a marked point and an orbifold point
is shown by $\bullet$ and  $\otimes$, respectively.
See Figure \ref{fig:annulus1} for an example.
\end{defn}

\begin{figure}
\begin{center}
\begin{pspicture}(0,0)(5,5.2)
%
\psset{linewidth=0.5pt}
\pscircle(2.5,2.5){2.5}
\pscircle[fillstyle=solid, fillcolor=lightgray](2.5,2.5){0.5}
\pscircle[fillstyle=solid, fillcolor=black](0,2.5){0.08} 
\pscircle[fillstyle=solid, fillcolor=black](5,2.5){0.08} 
\pscircle[fillstyle=solid, fillcolor=black](2.5,0){0.08} 
\pscircle[fillstyle=solid, fillcolor=black](2.5,5){0.08}
\pscircle[fillstyle=solid, fillcolor=black](2.5,3){0.08}
\pscircle[fillstyle=solid, fillcolor=black](2.1,2.22){0.08}
\pscircle[fillstyle=solid, fillcolor=black](2.9,2.22){0.08}
\pscircle[fillstyle=solid, fillcolor=black](1.2,2){0.08}
\pscircle[fillstyle=solid, fillcolor=black](3.6,4){0.08}
\rput[c]{0}(0.9,2.8){$\otimes$}
\rput[c]{0}(1.5,2.8){$\otimes$}
\rput[c]{0}(3.6,2.5){$\otimes$}
\psline(1.2,2)(1.47,2.7)
\psline(1.2,2)(0.93,2.7)
\psline(2.9,2.22)(3.5,2.46)
\pscurve(1.2,2)(0.6,2.9)(1.2,3.4)(1.8,2.9)(1.2,2)
\pscurve(1.2,2)(0.6,2.2)(0,2.5)
\pscurve(1.2,2)(1.67,1)(2.5,0)
\pscurve(1.2,2)(1.6,2.1)(2.1,2.22)
\pscurve(2.5,0)(2.2,1.5)(2.1,2.22)
\pscurve(2.5,0)(2.8,1.5)(2.9,2.22)
\psline(2.5,5)(3.6,4)
\pscurve(2.5,5)(3.8,4.3)(4,3.65)(3.4,3.6)(2.5,5)
\pscurve(2.9,2.22)(3.8,2.05)(5,2.5)
\pscurve(2.9,2.22)(3.7,2.8)(5,2.5)
\pscurve(2.5,5)(3.2,3.6)(4.5,2.9)(5,2.5)
\pscurve(2.5,3)(3.4,3.1)(4.4,2.85)(5,2.5)
\psline(2.5,3)(2.5,5)
\pscurve(1.2,2)(0.5,2.55)(0.8,3.6)(1.75,3.35)(1.95,2.6)(2,2.3)(2.1,2.22)
\pscurve(1.2,2)(0.45,2.4)(0.8,3.8)(1.8,3.6)(2.5,3)
\pscurve(0,2.5)(0.95,4.1)(1.75,3.8)(2.5,3)
\end{pspicture}
\end{center}
\caption{Example of an ideal triangulation of
an annulus with two punctures and three orbifold points.} 
\label{fig:annulus1}
\end{figure}
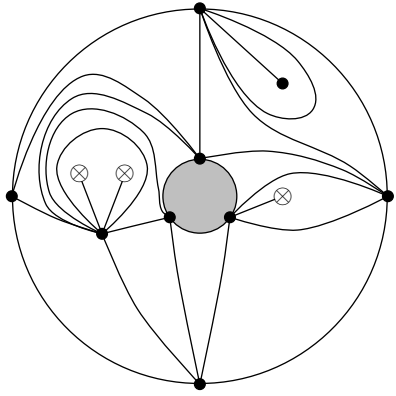

\begin{defn}
\label{defn:arc1}
 An {\em arc\/} $\alpha$ in an orbifold $\bfO$ is a curve
in $\bfS$ such that
\begin{itemize}
\item one of the following holds:
\begin{itemize}
\item
both endpoints of $\alpha$ are marked points  (then $\alpha$ is called an
{\em ordinary arc\/}),
\item
one end point of $\alpha$ is a marked point and the other end point is
an orbifold point (then $\alpha$ is called a
{\em pending arc\/}),
\end{itemize}
\item $\alpha$ does not intersect itself except for the endpoints,
\item $\alpha$ is away from punctures, orbifold points, and the boundary except 
for the endpoints,
\item $\alpha$ is not contractible into a marked point or onto a boundary of $\bfS$,
\item $\alpha$ does not cut out a monogon containing exactly one 
orbifold point.
\end{itemize}
Furthermore, each arc is considered 
{\em up to isotopy\/}  in the class of such curves.
\end{defn}

\begin{defn}
\label{defn:compat1}
Two arcs are said to be {\em compatible\/} if
\begin{itemize}
\item
 there are representatives in their respective isotopy classes such that
 they do not intersect each other in the interior of $\bfS$,
 \item
they do not share an orbifold point as a common end point.
 \end{itemize}
\end{defn}

\begin{defn}
A {\em labeled ideal triangulation\/} $T$ 
of an orbifold $\bf O$ is an $n$-tuple of arcs $(\alpha_i)_{i=1}^{n}$ 
in ${\bf O}$ which gives a maximal set of
distinct pairwise compatible arcs.
\end{defn}

\begin{caut}
This should not be confused with a {\em tagged triangulation\/} 
of an orbifold also studied in \cite{Felikson11}.
We do not use tagged triangulations in this paper. 
\end{caut}

Figure \ref{fig:annulus1} shows an example of a labeled ideal triangulation
of an annulus with two punctures and three orbifold points, where the labels
of arcs are omitted for simplicity.

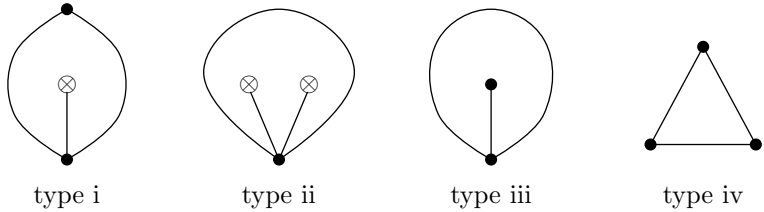
\begin{figure}
\begin{center}
\begin{pspicture}(0,-0.5)(2,2.2)
%
\psset{linewidth=0.5pt}
\pscircle[fillstyle=solid, fillcolor=black](1,0){0.08} 
\pscircle[fillstyle=solid, fillcolor=black](1,2){0.08} 
\rput[c]{0}(1,1){$\otimes$}
\rput[c]{0}(1,-0.5){type i}
\psline(1,0.9)(1,0)
\pscurve(1,0)(0.3,0.6)(0.3,1.4)(1,2)
\pscurve(1,0)(1.7,0.6)(1.7,1.4)(1,2)
\end{pspicture}
\hskip20pt
\begin{pspicture}(0,-0.5)(2,2.2)
%
\psset{linewidth=0.5pt}
\pscircle[fillstyle=solid, fillcolor=black](1,0){0.08} 
\rput[c]{0}(0.6,1){$\otimes$}
\rput[c]{0}(1.4,1){$\otimes$}
\rput[c]{0}(1,-0.5){type ii}

\psline(0.62,0.9)(1,0)
\psline(1.38,0.9)(1,0)
\pscurve(1,0)(0,1.2)(1,2)(2,1.2)(1,0)
\end{pspicture}
\hskip20pt
\begin{pspicture}(0,-0.5)(2,2.2)
%
\psset{linewidth=0.5pt}
\pscircle[fillstyle=solid, fillcolor=black](1,0){0.08} 
\pscircle[fillstyle=solid, fillcolor=black](1,1){0.08} 
\rput[c]{0}(1,-0.5){type iii}

\psline(1,1)(1,0)
\pscurve(1,0)(0.3,0.6)(0.3,1.6)(1,2)(1.7,1.6)(1.7,0.6)(1,0)
\end{pspicture}
\hskip20pt
\begin{pspicture}(0,-0.5)(2,2.2)
%
\psset{linewidth=0.5pt}
\pscircle[fillstyle=solid, fillcolor=black](0.3,0.2){0.08} 
\pscircle[fillstyle=solid, fillcolor=black](1.7,0.2){0.08} 
\pscircle[fillstyle=solid, fillcolor=black](1,1.5){0.08} 
\rput[c]{0}(1,-0.5){type iv}

\psline(0.3,0.2)(1.7,0.2)
\psline(0.3,0.2)(1,1.5)
\psline(1.7,0.2)(1,1.5)
\end{pspicture}
\end{center}
\caption{Pieces of labeled ideal triangulations of orbifolds.} 
\label{fig:piece1}
\end{figure}

It  was explained in \cite{Felikson11} that any labeled 
ideal triangulation of an orbifold is built by glueing the edges 
of four kinds of ``puzzle pieces'' depicted in Figure \ref{fig:piece1}.
We call them
{\em type i, type ii, type iii, and type iv pieces}, respectively,
as specified in Figure \ref{fig:piece1}. We remark that, there are 
actually infinitely many  distinct type ii pieces of given 
two orbifold points and  a puncture. See Figure \ref{fig:pending2}.
Note that orbifold points and pending arcs are always inside 
type i or type ii pieces.

\begin{defn} For a given labeled ideal triangulation 
$T=(\alpha_i)_{i=1}^n$ of an orbifold $\bfO$, 
a {\em flip of $T$ at $k$} ($k=1,\dots,n$) is another labeled
ideal triangulation $T'=(\alpha'_i)_{i=1}^n$ of $\bfO$ 
such that $\alpha'_i=\alpha_i$ for $i\neq k$
and $\alpha'_k \neq \alpha_k$. 
\end{defn}

\begin{lem}[{\cite{Fomin08,Felikson11}}]
\label{lem:flip1}
For a given triangulation $T=(\alpha_i)_{i=1}^n$ of an 
orbifold $\bfO$, the flip $T'=\mu_k(T)$ exists if and only if
the arc $\alpha_k$ is not inside a type iii piece of $T$;
moreover, under the same condition $T'$ exists uniquely.
In particular, any pending arcs can be  flipped.
\end{lem}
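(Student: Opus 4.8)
The plan is to prove Lemma \ref{lem:flip1} by reducing the global statement to a local analysis of the puzzle pieces, since every labeled ideal triangulation of an orbifold is assembled by glueing the four kinds of pieces in Figure \ref{fig:piece1}. First I would recall from \cite{Fomin08} the classical fact that in an ordinary ideal triangulation (no orbifold points) every arc can be flipped uniquely, except when the arc is a side of a \emph{self-folded} triangle, and that the obstruction is precisely the presence of such a configuration. The strategy is then to identify the orbifold analogue of the self-folded triangle: I claim the only obstruction to flipping $\alpha_k$ is that $\alpha_k$ sits inside a type iii piece, which is the direct orbifold counterpart of the self-folded triangle (a pending arc enclosed by a loop based at a single marked point, with the loop bounding a monogon around the orbifold point).

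The key steps, in order, are as follows. (1) Classify, for each arc $\alpha_k$, the local configuration of the triangulation in a neighborhood of $\alpha_k$, i.e.\ which puzzle pieces are glued along $\alpha_k$. An ordinary arc is shared by (the edges of) two triangular pieces, while a pending arc lies inside a single type i or type ii piece. (2) For an \emph{ordinary} arc, argue that flipping proceeds exactly as in \cite{Fomin08}: one removes $\alpha_k$, obtains a quadrilateral (possibly degenerate) whose two diagonals are $\alpha_k$ and $\alpha_k'$, and replaces it by the other diagonal; one must check that the resulting $\alpha_k'$ is a legitimate arc in the sense of Definition \ref{defn:arc1} (in particular it does not cut out a monogon around an orbifold point and is not contractible), and that it is compatible with the remaining $\alpha_i$. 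The only case where no valid $\alpha_k'$ exists is the self-folded situation, which in the orbifold setting is exactly the type iii piece. (3) For a \emph{pending} arc, which lies inside a type i or a type ii piece, exhibit the flip explicitly: inside a type i piece the pending arc can be rotated around the orbifold point to the other ``side'' of the bigon (the two type ii pieces of Figure \ref{fig:pending2} realize the distinct outcomes), and inside a type ii piece the pending arc flips to the pending arc at the other orbifold point; in both cases the result is again a type i or type ii piece, so any pending arc is flippable. (4) Finally, verify \emph{uniqueness}: since $\alpha_i'=\alpha_i$ for $i\neq k$ is fixed, the complement $\bfS\setminus\bigcup_{i\neq k}\alpha_i$ is a disjoint union of pieces, and $\alpha_k'$ must be an arc completing this partial triangulation to a maximal one; a local count in the relevant piece shows there is exactly one choice of $\alpha_k'\neq\alpha_k$.

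The main obstacle will be the careful bookkeeping in step (2) to show that the type iii piece is genuinely the \emph{only} obstruction, and conversely that when $\alpha_k$ lies in a type iii piece no flip exists. For the ``no flip'' direction one argues that removing the pending arc from a type iii piece leaves a monogon enclosing the orbifold point, and the only arc one could draw to re-triangulate it is forbidden by the last clause of Definition \ref{defn:arc1} (an arc may not cut out a monogon containing exactly one orbifold point) together with the isotopy identification, so the only admissible completion is the original $\alpha_k$ itself; hence no distinct $\alpha_k'$ exists. For the converse, one must rule out any other degenerate gluings creating a hidden obstruction, which is where appealing to the puzzle-piece decomposition of \cite{Felikson11} does the real work: because orbifold points and pending arcs are confined to type i and type ii pieces, the enumeration of local cases is finite and the type iii configuration is seen to be the unique offending one.

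The existence and uniqueness for ordinary arcs are inherited essentially verbatim from \cite{Fomin08}, so the substance of the argument is the local verification for pending arcs and the identification of the type iii obstruction; I expect this local casework, rather than any global topological input, to be the crux. Once steps (1)--(4) are assembled, the statement ``the flip $T'=\mu_k(T)$ exists if and only if $\alpha_k$ is not inside a type iii piece, and is then unique, so in particular every pending arc is flippable'' follows immediately.
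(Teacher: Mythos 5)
The paper does not actually prove this lemma: it is imported wholesale from \cite{Fomin08} and \cite{Felikson11} (the statement carries those citations and no proof follows it), so there is no in-paper argument to compare against. Your overall architecture --- reduce to the puzzle-piece decomposition, identify the type iii piece as the orbifold analogue of the self-folded-triangle obstruction, exhibit the flips of pending arcs explicitly, and check uniqueness locally --- is the standard and correct route. However, two of your concrete local claims are wrong, and they sit exactly where you yourself locate the crux of the argument.

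First, you describe the type iii piece as ``a pending arc enclosed by a loop based at a single marked point, with the loop bounding a monogon around the orbifold point.'' This is not what a type iii piece is. As Figure \ref{fig:piece1} shows, a type iii piece contains \emph{no} orbifold point: it is the ordinary self-folded triangle, a loop based at a marked point enclosing a \emph{puncture}, with the inner arc being an \emph{ordinary} arc joining the base point to the puncture. Indeed the configuration you describe cannot occur at all, since an arc is forbidden from cutting out a monogon containing exactly one orbifold point (last clause of Definition \ref{defn:arc1}); and the paper states explicitly that orbifold points and pending arcs always lie inside type i or type ii pieces. Your misidentification would make a pending arc potentially unflippable, directly contradicting the final clause of the lemma; correcting it is also what makes that clause immediate (pending arcs are never in type iii pieces, hence always flippable).

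Second, your step (3) asserts that inside a type ii piece ``the pending arc flips to the pending arc at the other orbifold point.'' This cannot be right: two arcs sharing an orbifold point as an endpoint are incompatible (Definition \ref{defn:compat1}), and the other pending arc of the piece is retained by the flip, so the new arc must keep its own orbifold endpoint. What actually changes is the isotopy class --- the new pending arc winds differently around the other pending arc, as in Figure \ref{fig:pending2}. This error also undermines your uniqueness argument for pending arcs: since there are infinitely many non-isotopic pending arcs ending at a given orbifold point inside the piece (the paper remarks that there are infinitely many distinct type ii pieces on the same data), the ``local count'' giving exactly one candidate $\alpha'_k\neq\alpha_k$ must invoke compatibility with the \emph{other} pending arc of the piece to cut the infinite family down to one; your sketch does not supply this.
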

Note that $T'=\mu_k(T)$ if and only if $T=\mu_k(T')$. Namely,
$\mu_k$ is an involution.

\begin{ex} Flips of pending arcs are shown in Figures \ref{fig:pending1}
and \ref{fig:pending2}.
\end{ex}

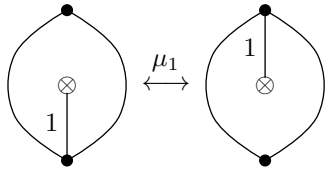
\begin{figure}
\begin{center}
\begin{pspicture}(0,0)(2,2.2)
%
\psset{linewidth=0.5pt}
\pscircle[fillstyle=solid, fillcolor=black](1,0){0.08} 
\pscircle[fillstyle=solid, fillcolor=black](1,2){0.08} 
\rput[c]{0}(1,1){$\otimes$}
\rput[c]{0}(0.8,0.5){$1$}
\psline(1,0.9)(1,0)
\pscurve(1,0)(0.3,0.6)(0.3,1.4)(1,2)
\pscurve(1,0)(1.7,0.6)(1.7,1.4)(1,2)
\end{pspicture}
\begin{pspicture}(-0.2,0)(0.2,2.2)
\rput[c]{0}(0,1.3){$\mu_1$}
\rput[c]{0}(0,1){$\longleftrightarrow$}
\end{pspicture}
\begin{pspicture}(0,0)(2,2.2)
%
\psset{linewidth=0.5pt}
\pscircle[fillstyle=solid, fillcolor=black](1,0){0.08} 
\pscircle[fillstyle=solid, fillcolor=black](1,2){0.08} 
\rput[c]{0}(1,1){$\otimes$}
\rput[c]{0}(0.8,1.5){$1$}
\psline(1,1.1)(1,2)
\pscurve(1,0)(0.3,0.6)(0.3,1.4)(1,2)
\pscurve(1,0)(1.7,0.6)(1.7,1.4)(1,2)
\end{pspicture}
\end{center}
\caption{Flips of pending arcs in type i pieces.} 
\label{fig:pending1}
\end{figure}
\begin{figure}
\begin{center}
%
%
\begin{pspicture}(0,0)(2,2.2)
%
\psset{linewidth=0.5pt}
\pscircle[fillstyle=solid, fillcolor=black](1,0){0.08} 
\rput[c]{0}(0.6,1){$\otimes$}
\rput[c]{0}(1.4,1){$\otimes$}
\rput[c]{0}(0.85,0.8){1}
\rput[c]{0}(1,1.2){2}
\pscurve(1.34,1.07)(1,1.4)(0.4,1.1)(1,0)
\psline(0.62,0.9)(1,0)
\pscurve(1,0)(0,1.2)(1,2)(2,1.2)(1,0)
\end{pspicture}
\begin{pspicture}(-0.3,0)(0.3,2.2)
\rput[c]{0}(0,1.3){$\mu_2$}
\rput[c]{0}(0,1){$\longleftrightarrow$}
\end{pspicture}
%
\begin{pspicture}(0,0)(2,2.2)
%
\psset{linewidth=0.5pt}
\pscircle[fillstyle=solid, fillcolor=black](1,0){0.08} 
\rput[c]{0}(0.6,1){$\otimes$}
\rput[c]{0}(1.4,1){$\otimes$}
\rput[c]{0}(0.85,0.8){1}
\rput[c]{0}(1.15,0.8){2}
\psline(0.62,0.9)(1,0)
\psline(1.38,0.9)(1,0)
\pscurve(1,0)(0,1.2)(1,2)(2,1.2)(1,0)
\end{pspicture}
\begin{pspicture}(-0.3,0)(0.3,2.2)
\rput[c]{0}(0,1.3){$\mu_1$}
\rput[c]{0}(0,1){$\longleftrightarrow$}
\end{pspicture}
%
\begin{pspicture}(0,0)(2,2.2)
%
\psset{linewidth=0.5pt}
\pscircle[fillstyle=solid, fillcolor=black](1,0){0.08} 
\rput[c]{0}(0.6,1){$\otimes$}
\rput[c]{0}(1.4,1){$\otimes$}
\rput[c]{0}(1.15,0.8){2}
\rput[c]{0}(1,1.2){1}
\pscurve(0.66,1.07)(1,1.4)(1.6,1.1)(1,0)
\psline(1.38,0.9)(1,0)
\pscurve(1,0)(0,1.2)(1,2)(2,1.2)(1,0)
\end{pspicture}
\begin{pspicture}(-0.3,0)(0.3,2.2)
\rput[c]{0}(0,1.3){$\mu_2$}
\rput[c]{0}(0,1){$\longleftrightarrow$}
\end{pspicture}
%
\begin{pspicture}(0,0)(2,2.2)
%
\psset{linewidth=0.5pt}
\pscircle[fillstyle=solid, fillcolor=black](1,0){0.08} 
\rput[c]{0}(0.6,1){$\otimes$}
\rput[c]{0}(1.4,1){$\otimes$}
\rput[c]{0}(1.2,0.7){2}
\rput[c]{0}(1,1.2){1}
\pscurve(0.66,1.07)(1,1.4)(1.6,1.1)(1,0)
\pscurve(1.31,0.95)(0.5,0.7)(0.5,1.3)(1.75,1.3)(1,0)
\pscurve(1,0)(0,1.2)(1,2)(2,1.2)(1,0)
\end{pspicture}
%
\end{center}
\caption{Flips of pending arcs in type ii pieces.} 
\label{fig:pending2}
\end{figure}
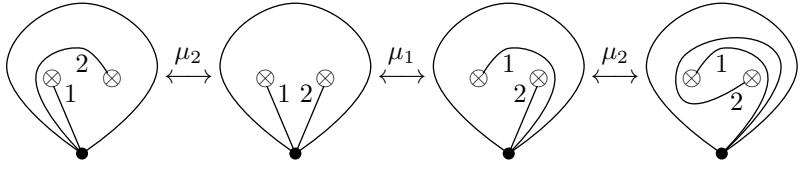

The following property is a  consequence of the transitivity  of flips 
on (unlabeled) ideal triangulations of an orbifold 
given by \cite[Theorem 4.2]{Felikson11}.

\begin{prop}[{\cite[Theorem 4.2]{Felikson11}}]
\label{prop:total1}
Let $\bfO$ be an orbifold, and let
$T=(\alpha_i)_{i=1}^n$ be a labeled ideal triangulation of $\bfO$.
Then, the total number ($=n$) of arcs of  $T$
 and the total number of pieces in $T$ do not depend on $T$.
\end{prop}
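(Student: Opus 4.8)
The plan is to derive both invariances from the transitivity of flips. By \cite[Theorem 4.2]{Felikson11}, any two (unlabeled) ideal triangulations of $\bfO$ are joined by a finite sequence of flips, and the choice of labeling plays no role in counting arcs or pieces. Thus it suffices to show that a single flip $T \mapsto \mu_k(T)$ changes neither the number of arcs nor the number of pieces: both counts are then constant along every flip sequence, and transitivity spreads this constancy across all ideal triangulations of $\bfO$.

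The invariance of the number of arcs is immediate from the definition of a flip. By definition $\mu_k$ carries the $n$-tuple $(\alpha_i)_{i=1}^n$ to an $n$-tuple $(\alpha'_i)_{i=1}^n$ agreeing with it in every slot except the $k$-th, so $n$ is unchanged.

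For the number of pieces I would argue that a flip is a local move and check that it preserves the piece count in each admissible case. A flip at $k$ modifies $T$ only inside the union of the pieces meeting $\alpha_k$; away from $\alpha_k$ the triangulations $T$ and $\mu_k(T)$ coincide, so it is enough to compare the two local retilings. By Lemma \ref{lem:flip1} the flipped arc $\alpha_k$ is either an ordinary arc (not lying inside a type iii piece) or a pending arc. When $\alpha_k$ is a pending arc it sits inside a type i or type ii piece, and Figures \ref{fig:pending1} and \ref{fig:pending2} list all such flips: a type i piece is sent to a type i piece and a type ii piece to a type ii piece, so no piece is created or destroyed. When $\alpha_k$ is an ordinary arc, the flip is the usual diagonal exchange inside the polygon formed by the pieces adjacent to $\alpha_k$, which again returns the same number of pieces. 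Hence the total count is preserved.

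The main obstacle is not conceptual but lies in the bookkeeping for the number of pieces. One must first know that the puzzle-piece decomposition is canonically attached to $T$, so that ``the number of pieces in $T$'' is well defined, and then verify that Figure \ref{fig:piece1} together with Figures \ref{fig:pending1} and \ref{fig:pending2} really exhausts the local configurations of a flippable arc. The delicate points are the pending-arc flips inside a type ii piece, where the piece changes shape but not type (there being infinitely many type ii pieces, cf.\ Figure \ref{fig:pending2}), and the ordinary-arc flips along edges bordering a type i or type ii piece, which must be confirmed to leave the piece count intact. Once these local checks are in place, transitivity of flips yields the proposition.
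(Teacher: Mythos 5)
Your proposal is correct and follows the same route the paper indicates: the paper offers no written proof beyond the remark that the statement ``is a consequence of the transitivity of flips on (unlabeled) ideal triangulations'' from \cite[Theorem 4.2]{Felikson11}, which is exactly the reduction you carry out (transitivity plus invariance of both counts under a single flip). Your local case-checks for the piece count, including the flagged delicate cases, are consistent with Lemma \ref{lem:flip1} and Figures \ref{fig:piece1}--\ref{fig:pending2}, so nothing further is needed.
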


\subsection{Signed adjacency matrix}
To each labeled ideal triangulation $T=(\alpha_i)_{i=1}^n$ 
of an orbifold, we attach a skew-symmetrizable integer matrix
$B=B(T)=(b_{ij})_{i,j=1}^n$ called the {\em signed adjacency matrix of $T$} 
(see \cite[Section 4]{Fomin08} and \cite[Table 3.4]{Felikson11}). 
In this paper we do not need its full definition, which is 
somewhat cumbersome to describe, and only the submatrices 
of $B$ corresponding to type i and type ii pieces of $T$ are essential.
They are explicitly given as follows:
 \par
{\em Type i piece:}
\begin{align}
\label{eq:b1}
\raisebox{-30pt}{
\mbox{
\begin{pspicture}(0,-0.2)(2,2.2)
%
\psset{linewidth=0.5pt}
\pscircle[fillstyle=solid, fillcolor=black](1,0){0.08} 
\pscircle[fillstyle=solid, fillcolor=black](1,2){0.08} 
\rput[c]{0}(1,1){$\otimes$}
\rput[c]{0}(0.8,0.5){1}
\rput[c]{0}(-0,1){2}
\rput[c]{0}(2,1){3}
\psline(1,0.9)(1,0)
\pscurve(1,0)(0.3,0.6)(0.3,1.4)(1,2)
\pscurve(1,0)(1.7,0.6)(1.7,1.4)(1,2)
\end{pspicture}
}
}
\hskip30pt
B=
\left(
\begin{matrix}
0&2&-2\\
-1&0&1\\
1&-1&0\\
\end{matrix}
\right)
\end{align}

\par
{\em Type ii piece:} The matrix $B$ depends only on the relative 
position of arcs near the marked point, and it is independent of
how they reach to orbifold points.
\begin{align}
\label{eq:b2}
\raisebox{-30pt}{
\mbox{
\begin{pspicture}(0,-0.2)(2,2.2)
%
\psset{linewidth=0.5pt}
\pscircle[fillstyle=solid, fillcolor=black](1,0){0.08} 
\rput[c]{0}(0.6,1){$\otimes$}
\rput[c]{0}(1.4,1){$\otimes$}
\rput[c]{0}(0.85,0.8){1}
\rput[c]{0}(1.15,0.8){2}
\rput[c]{0}(-0.2,1){3}
\psline(0.62,0.9)(1,0)
\psline(1.38,0.9)(1,0)
\pscurve(1,0)(0,1.2)(1,2)(2,1.2)(1,0)
\end{pspicture}
}
}
\hskip30pt
B=
\left(
\begin{matrix}
0&-2&2\\
2&0&-2\\
-1&1&0\\
\end{matrix}
\right)
\end{align}

We set the {\em weight $d_i$} of the arc $\alpha_i$ in $T$ as
\begin{align}
\label{eq:weight1}
d_i=
\begin{cases}
1 & \mbox{$\alpha_i$ is an ordinary arc}\\
2 & \mbox{$\alpha_i$ is a pending arc.}
\end{cases}
\end{align}
Introduce the diagonal matrix $D=(d_i\delta_{ij})_{i,j=1}^n$.
Then, $BD$ and $\tilde{B}=D^{-1} B$ are skew-symmetric and integer matrices.
For example, in the above two cases, we have respectively
\begin{align}
\label{eq:bd1}
BD=\left(
\begin{matrix}
0&2&-2\\
-2&0&1\\
2&-1&0\\
\end{matrix}
\right),
\quad
\tilde{B}=D^{-1}B=\left(
\begin{matrix}
0&1&-1\\
-1&0&1\\
1&-1&0\\
\end{matrix}
\right),\\
\label{eq:bd2}
BD=\left(
\begin{matrix}
0&-4&2\\
4&0&-2\\
-2&2&0\\
\end{matrix}
\right),
\quad
\tilde{B}=D^{-1}B=\left(
\begin{matrix}
0&-1&1\\
1&0&-1\\
-1&1&0\\
\end{matrix}
\right).
\end{align}

The following is  the most basic notion in cluster algebra theory \cite{Fomin03a}.
\begin{defn}
For a skew-symmetrizable matrix $B=(b_{ij})_{i,j=1}^n$,
the {\em mutation of $B$ at $k$}
($k=1,\dots,n$) is another skew-symmetrizable matrix 
$B'=\mu_k(B)=(b'_{ij})_{i,j=1}^n$
defined by the following relation:
 \begin{align}
\label{eq:bmut}
b'_{ij}&=
\begin{cases}
-b_{ij}& \mbox{$i=k$ or $j=k$}\\
b_{ij}+[- b_{ik}]_+ b_{kj} + b_{ik}[ b_{kj}]_+
& \mbox{$i,j\neq k$,}\\
\end{cases}
\end{align}
where $[a]_+=\max(a,0)$.
\end{defn}

The flips of labeled ideal triangulations of an orbifold and 
the mutations of their signed adjacency matrices are compatible 
in the following sense.

\begin{prop}[{\cite[Theorem 4.19]{Felikson11}}]
\label{prop:bT1}
For any labeled ideal triangulation $T=(\alpha_i)_{i=1}^n$ of an 
orbifold $\bfO$ and any $k$ such that the arc $\alpha_k$ is not 
inside a type iii piece of $T$, we have $B(\mu_k(T))=\mu_k(B(T))$.
\end{prop}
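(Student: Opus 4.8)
The plan is to reduce the global identity $B(\mu_k(T))=\mu_k(B(T))$ to a finite list of purely local computations, exploiting that both sides are governed only by the combinatorics of the pieces meeting the flipped arc $\alpha_k$. First I would record the two localization principles. On the geometric side, flipping $\alpha_k$ alters the triangulation only inside the union of pieces that contain $\alpha_k$ as a side; every arc $\alpha_i$ disjoint from this region keeps all of its incidences, so $B(\mu_k(T))$ and $B(T)$ can differ only in entries $b_{ij}$ for which $\alpha_i$ or $\alpha_j$ shares a piece with $\alpha_k$. On the algebraic side, the rule \eqref{eq:bmut} changes $b_{ij}$ only when $i=k$, $j=k$, or both $b_{ik}$ and $b_{kj}$ are nonzero; since, by construction of the signed adjacency matrix, $b_{ik}\neq 0$ forces $\alpha_i$ to share a piece with $\alpha_k$, the two supports coincide and it suffices to match the entries indexed by $\alpha_k$ and by the arcs bounding a common piece with it.

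Second, I would make the additive structure of $B(T)$ explicit: $B(T)=\sum_{\pi} B^{\pi}$, where $\pi$ runs over the pieces of $T$ and $B^{\pi}$ is the local contribution read off from the tables, the relevant instances being \eqref{eq:b1} for a type i piece and \eqref{eq:b2} for a type ii piece (an ordinary triangle contributing exactly as in the surface case of \cite{Fomin08}). Throughout, the weights \eqref{eq:weight1} and the diagonal matrix $D$ must be carried along, since $B$ is only skew-symmetrizable and a pending arc produces entries of absolute value $2$; it is convenient to record everything in the skew-symmetric companion $BD$, which is likewise additive over pieces, as \eqref{eq:bd1} and \eqref{eq:bd2} illustrate. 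By Proposition \ref{prop:total1} the number and types of pieces are invariants of $\bfO$, so the tally of pieces created and destroyed by the flip is unambiguous.

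Third comes the case analysis, organized by the nature of $\alpha_k$ and of the adjacent pieces, where Lemma \ref{lem:flip1} guarantees the flip exists precisely when $\alpha_k$ does not lie in a type iii piece. When $\alpha_k$ is an ordinary arc bounded by ordinary triangles, the verification is identical to the surface computation of \cite{Fomin08} and may be quoted. The genuinely new configurations are those in which a type i or type ii piece participates: $\alpha_k$ a pending arc inside a type i piece (flip of Figure \ref{fig:pending1}), $\alpha_k$ a pending arc inside a type ii piece (flips of Figure \ref{fig:pending2}), and $\alpha_k$ an ordinary boundary arc of such a piece. For each of these finitely many local pictures I would extract the relevant $B$-submatrix before and after the flip directly from \eqref{eq:b1}--\eqref{eq:b2}, apply \eqref{eq:bmut} to the same submatrix, and check that the two results agree; the weight-$2$ entries $\pm 2$ carried by the pending arc in \eqref{eq:b1}--\eqref{eq:b2} are precisely the feature, absent in the surface case, that these checks must accommodate in the product terms $b_{ik}[b_{kj}]_+$.

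The main obstacle I anticipate is bookkeeping rather than conceptual. Two points demand care. First, an ordinary arc $\alpha_k$ can belong to two distinct pieces simultaneously, so the affected submatrix is an overlay of two local contributions; one must be sure the gluing of the $B^{\pi}$ introduces no cross terms beyond those predicted by \eqref{eq:bmut}, and in particular that any adjacency newly created between the two outer arcs of the flipped region appears with the correct sign and weight. Second, as noted after Figure \ref{fig:piece1} there are infinitely many isotopy types of type ii pieces, distinguished by how the pending arcs wind toward the orbifold points; I would verify, using the stated fact that $B^{\pi}$ depends only on the relative position of the arcs at the marked point, that this winding leaves both $B(T)$ and its flip unchanged, so that the finite check above genuinely exhausts all cases.
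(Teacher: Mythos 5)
The paper does not prove Proposition \ref{prop:bT1}: the statement is quoted directly from \cite[Theorem 4.19]{Felikson11}, and the surrounding text only sanity-checks it on the two local matrices \eqref{eq:b1} and \eqref{eq:b2} (where $\mu_1(B)=-B$, resp.\ $\mu_1(B)=\mu_2(B)=-B$). Your proposal is therefore not a variant of the paper's argument but a reconstruction of the proof in the cited reference, and it follows the standard strategy used there and in \cite{Fomin08}: additivity of $B(T)$ over puzzle pieces, localization of both sides of the identity $B(\mu_k(T))=\mu_k(B(T))$ to the pieces meeting $\alpha_k$, and a finite case check. The outline is sound, and your two cautionary points (the overlay of two local blocks when $\alpha_k$ is an ordinary arc, and the isotopy-independence of the type ii contribution) are exactly the right ones. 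Two things remain underspecified if this were to stand as a self-contained proof. First, the list of configurations is larger than the three you name: when $\alpha_k$ is ordinary, the second piece adjacent to it can independently be of type i, ii, iii, or iv, so the case analysis runs over (unordered) pairs of piece types; in particular $\alpha_k$ may be the enclosing loop arc of a type iii piece, which is flippable because only the \emph{inner} arc of such a piece is excluded by Lemma \ref{lem:flip1}, and that case must be checked too. Second, carrying out those checks requires the full definition of $B(T)$ from \cite[Table 3.4]{Felikson11}, including the type iii and type iv blocks, which the present paper deliberately omits. Neither point is a conceptual obstacle; within the scope of this paper the honest proof is the citation, and your plan buys an independently verifiable argument at the cost of exactly the bookkeeping you anticipate.
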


\begin{ex} For the matrix $B$ in \eqref{eq:b1}, $\mu_1(B)=-B$.
For the matrix $B$ in \eqref{eq:b2}, $\mu_1(B)=\mu_2(B)=-B$.
They certainly agree with Proposition \ref{prop:bT1}.
\end{ex}

\subsection{Stokes triangulations of orbifolds 
and signed flips of pending arcs}

In \cite{Iwaki14a} the notion of a {\em Stokes triangulation\/} 
of a bordered surface was introduced. It is a refinement of
 an ideal triangulation  by \cite{Fomin08}
so as to capture the characteristics of the geometry of Stokes graphs
especially under the mutation.
See also \cite{Qiu14}.
Here we  extend it to the orbifold case straightforwardly.

\begin{defn} For an orbifold $\bfO=(\bfS,\bfM,\bfQ)$,
let $m$ be the total number of pieces in any labeled ideal triangulation
$T$ of $\bfO$, where $m$ is independent of the choice of $T$
by Proposition \ref{prop:total1}.
Accordingly, we introduce a set $\bfA$ consisting of $m$ 
internal points of $\bfO$ such that $\bfA\cap \bfM=\bfA \cap \bfQ=\emptyset$.
We call $a\in \bfA$ a {\em midpoint (of a piece)\/},
and in figures it will be shown by $\times$. 
For brevity, we still call
$(\bfO,\bfA)=(\bfS,\bfM,\bfQ,\bfA)$ an {\em orbifold}.
\end{defn}

\begin{defn} An {\em arc $\alpha$ in an orbifold $(\bfO,\bfA)$} is a curve
in $\bfS\setminus \bfA$ satisfying all conditions in
Definition \ref{defn:arc1}.
Each arc $\alpha$ is considered up to isotopy in the class of such curves. 
\end{defn}

When we consider an arc $\alpha$ in $(\bfO,\bfA)$,
it is sometimes convenient to regard it as an arc in $\bfO$
by forgetting the midpoints.
In that case we write the latter arc as $\tilde{\alpha}$ to
avoid confusion.

\begin{defn}
An $n$-tuple $T=(\alpha_i)_{i=1}^n$ of arcs
in $(\bfO,\bfA)$
is called a {\em labeled Stokes triangulation of $(\bfO,\bfA)$\/} 
if the following conditions are satisfied:
\begin{itemize}
\item
The arcs $\alpha_1$, \dots, $\alpha_n$
in $(\bfO,\bfA)$
 are pairwise compatible
(in the sense of Definition
\ref{defn:compat1} but considered in the isotopy classes for
arcs in $(\bfO,\bfA)$).
\item
The $n$-tuple $\tilde{T}=(\tilde{\alpha}_i)_{i=1}^n$
of arcs in  $\bfO$ yields
a labeled ideal triangulation of $\bfO$.
\item
Every piece of $T$ contains exactly one midpoint in its interior.
Here, a piece of $T$ is
a collection of arcs of $T$ such that it yields a piece of 
$\tilde{T}$ as arcs in $\bfO$.
\end{itemize}
\end{defn}

For a labeled Stokes triangulation $T$ of $(\bfO,\bfA)$,
its signed adjacency matrix $B(T)$ and 
weights of arcs in $T$ are defined by those 
of the underlying labeled ideal triangulation $\tilde{T}$.

Again,  pieces of a labeled Stokes triangulation $T$
fall into  four types (from type i to type iv) depending
on their types as pieces  in $\tilde{T}$.

Next we  introduce the {\em signed flips\/}
for labeled Stokes triangulations.
Like the ordinary flip of an arc of a labeled ideal triangulation, 
they are defined if an arc  is not  
 inside a type iii piece of a triangulation.
Here we concentrate on the signed flips of pending arcs.
 See \cite[Definition 6.4]{Iwaki14a} for the signed flips of
 ordinary arcs.

\begin{figure}
\begin{center}
\begin{pspicture}(0,0)(11,2.2)
\psset{linewidth=0.5pt}
\psset{fillstyle=solid, fillcolor=black}
\pscircle(1,0){0.08} 
\pscircle(1,2){0.08} 
\psset{fillstyle=none}
\psline(1,0)(1,0.9)
\pscurve(1,0)(0.3,0.6)(0.3,1.4)(1,2)
\pscurve(1,0)(1.7,0.6)(1.7,1.4)(1,2)
\rput[c]{0}(1,1.4){$\times$}
\rput[c]{0}(1,1){$\otimes$}
\rput[c]{0}(0.8,0.5){1}
\psline[arrows=->](2.2,1.2)(2.8,1.2)
\psline[arrows=<-](2.2,0.8)(2.8,0.8)
\rput[c]{0}(2.5,1.6){$\mu_{1}^{(+)}$}
\rput[c]{0}(2.5,0.5){$\mu_{1}^{(-)}$}
%
%
\rput[c]{0}(3,0)
{
\psset{linewidth=0.5pt}
\psset{fillstyle=solid, fillcolor=black}
\pscircle(1,0){0.08} 
\pscircle(1,2){0.08} 
\psset{fillstyle=none}
\pscurve(1,0)(0.3,0.6)(0.3,1.4)(1,2)
\pscurve(1,0)(1.7,0.6)(1.7,1.4)(1,2)
\pscurve(0.93,1.07)(0.7,1.5)(1,2)
%
\rput[c]{0}(1,1.4){$\times$}
\rput[c]{0}(1,1){$\otimes$}
\rput[c]{0}(0.6,1.5){1}
\psline[arrows=->](2.2,1.2)(2.8,1.2)
\psline[arrows=<-](2.2,0.8)(2.8,0.8)
\rput[c]{0}(2.5,1.6){$\mu_{1}^{(+)}$}
\rput[c]{0}(2.5,0.5){$\mu_{1}^{(-)}$}
}
%
%
\rput[c]{0}(6,0)
{
\psset{linewidth=0.5pt}
\psset{fillstyle=solid, fillcolor=black}
\pscircle(1,0){0.08} 
\pscircle(1,2){0.08} 
\psset{fillstyle=none}
\pscurve(1,0)(0.3,0.6)(0.3,1.4)(1,2)
\pscurve(1,0)(1.7,0.6)(1.7,1.4)(1,2)
\pscurve(1,0)(1.3,1.35)(1,1.7)(0.7,1.35)(0.93,1.07)
%
\rput[c]{0}(1,1.4){$\times$}
\rput[c]{0}(1,1){$\otimes$}
\rput[c]{0}(1,0.5){1}
\psline[arrows=->](2.2,1.2)(2.8,1.2)
\psline[arrows=<-](2.2,0.8)(2.8,0.8)
\rput[c]{0}(2.5,1.6){$\mu_{1}^{(+)}$}
\rput[c]{0}(2.5,0.5){$\mu_{1}^{(-)}$}
}
%
%
\rput[c]{0}(9,0)
{
\psset{linewidth=0.5pt}
\psset{fillstyle=solid, fillcolor=black}
\pscircle(1,0){0.08} 
\pscircle(1,2){0.08} 
\psset{fillstyle=none}
\pscurve(1,0)(0.3,0.6)(0.3,1.4)(1,2)
\pscurve(1,0)(1.7,0.6)(1.7,1.4)(1,2)
\pscurve(1,2)(0.55,1.5)(0.55,0.8)(1,0.5)(1.3,1.35)(1,1.7)(0.7,1.35)(0.93,1.07)
\rput[c]{0}(1,1.4){$\times$}
\rput[c]{0}(1,1){$\otimes$}
\rput[c]{0}(1,0.7){1}
%
%
}
\end{pspicture}
\end{center}
\caption{Signed flips of pending arcs in type i pieces.}
\label{fig:flip1}
\end{figure}
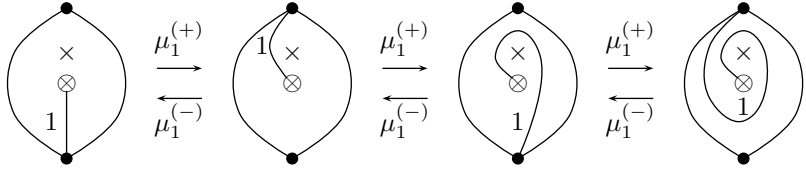

\begin{figure}
\begin{center}
\begin{pspicture}(0,0)(11,2.2)
%
\psset{linewidth=0.5pt}
\pscircle[fillstyle=solid, fillcolor=black](1,0){0.08} 
\rput[c]{0}(0.6,1){$\otimes$}
\rput[c]{0}(1.4,1){$\otimes$}
\rput[c]{0}(1,1.4){$\times$}
\rput[c]{0}(0.85,0.8){1}
\rput[c]{0}(1.15,0.8){2}
\psline(0.62,0.9)(1,0)
\psline(1.38,0.9)(1,0)
\pscurve(1,0)(0,1.2)(1,2)(2,1.2)(1,0)
\psline[arrows=->](2.2,1.2)(2.8,1.2)
\psline[arrows=<-](2.2,0.8)(2.8,0.8)
\rput[c]{0}(2.5,1.6){$\mu_{1}^{(+)}$}
\rput[c]{0}(2.5,0.5){$\mu_{1}^{(-)}$}
%
%
\rput[c]{0}(3,0)
{
%
\psset{linewidth=0.5pt}
\pscircle[fillstyle=solid, fillcolor=black](1,0){0.08} 
\rput[c]{0}(0.6,1){$\otimes$}
\rput[c]{0}(1.4,1){$\otimes$}
\rput[c]{0}(1,1.4){$\times$}
\rput[c]{0}(0.6,1.4){1}
\rput[c]{0}(1.15,0.8){2}
%
\pscurve(0.62,1.1)(1,1.6)(1.6,1)(1,0)
\psline(1.38,0.9)(1,0)
\pscurve(1,0)(0,1.2)(1,2)(2,1.2)(1,0)
\psline[arrows=->](2.2,1.2)(2.8,1.2)
\psline[arrows=<-](2.2,0.8)(2.8,0.8)
\rput[c]{0}(2.5,1.6){$\mu_{1}^{(+)}$}
\rput[c]{0}(2.5,0.5){$\mu_{1}^{(-)}$}
}
%
%
\rput[c]{0}(6,0)
{
%
\psset{linewidth=0.5pt}
\pscircle[fillstyle=solid, fillcolor=black](1,0){0.08} 
\rput[c]{0}(0.6,1){$\otimes$}
\rput[c]{0}(1.4,1){$\otimes$}
\rput[c]{0}(1,1.4){$\times$}
\rput[c]{0}(0.85,0.8){1}
\rput[c]{0}(1.15,0.8){2}
\pscurve(0.62,1.1)(0.8,1.6)(1.2,1.6)(0.95,0.6)(1,0)
\psline(1.38,0.9)(1,0)
\pscurve(1,0)(0,1.2)(1,2)(2,1.2)(1,0)
\psline[arrows=->](2.2,1.2)(2.8,1.2)
\psline[arrows=<-](2.2,0.8)(2.8,0.8)
\rput[c]{0}(2.5,1.6){$\mu_{1}^{(+)}$}
\rput[c]{0}(2.5,0.5){$\mu_{1}^{(-)}$}}
%
%
\rput[c]{0}(9,0)
{
%
\psset{linewidth=0.5pt}
\pscircle[fillstyle=solid, fillcolor=black](1,0){0.08} 
\rput[c]{0}(0.6,1){$\otimes$}
\rput[c]{0}(1.4,1){$\otimes$}
\rput[c]{0}(1,1.4){$\times$}
\rput[c]{0}(0.2,1.2){1}
\rput[c]{0}(1.17,0.8){2}
\pscurve(0.62,1.1)(0.8,1.6)(1.2,1.6)(0.75,0.6)(0.3,1)(0.7,1.8)(1.3,1.8)(1.7,1)(1,0)
\psline(1.38,0.9)(1,0)
\pscurve(1,0)(0,1.2)(1,2)(2,1.2)(1,0)
}
\end{pspicture}
\end{center}
\caption{Signed flips of pending arcs in type ii pieces.}
\label{fig:flip2}
\end{figure}

\begin{defn} For a labeled Stokes triangulation $T=(\alpha_i)_{i=1}^n$
of an orbifold $(\bfO,\bfA)$,  a pending arc $\alpha_k$,
and a sign $\ve\in \{+,-\}$,
the {\em signed flip  $T'=\mu_k^{(\ve)}(T)$ at $k$
with sign $\ve$} is another 
labeled Stokes triangulation of $(\bfO,\bfA)$ obtained from $T$ by replacing
the arc $\alpha_k$ with the one in Figures \ref{fig:flip1} and \ref{fig:flip2}.
Namely,
\begin{itemize}
\item %
if the pending arc $\alpha_k$ is in a {\em type i\/} piece,
$\alpha'_k$ is obtained from $\alpha_k$ by sliding
 its outside end point from a marked point to the other marked point 
 along the boundary of the piece clockwise 
 for $\ve=+$ and anticlockwise for $\ve=-$,
\item %
if the pending arc $\alpha_k$ is in a {\em type ii\/} piece,
$\alpha'_k$ is obtained from $\alpha_k$ by sliding
 its outside end point from a marked point to itself along the 
 boundary {\em or\/} along the other pending arc 
 in the  piece  clockwise for $\ve=+$ and anticlockwise for $\ve=-$.
 \end{itemize}
\end{defn}

The signed flips are compatible with the flip of the underlying
labeled ideal triangulation; i.e.,
for $T'=\mu^{(\ve)}_k(T)$ with any $\ve=\pm$, $\tilde{T}=\mu_k(\tilde{T})$ holds.
In particular, they are also compatible with the mutation of their
signed adjacency matrices, namely, $B(T')=\mu_k(B(T))$.

\subsection{Stokes graphs and Stokes triangulations}
\label{subsection:SG-to-ST}

Here we explain how a  Stokes triangulation of an orbifold
is obtained from a saddle-free Stokes graph of the Schr\"odinger 
equation \eqref{eq:Sch} with simple poles.

Recall that the principal term of the potential $Q(z,\eta)$ determines
a quadratic differential $\phi$ on the Riemann surface $\Sigma$ 
(see Definition \ref{def:quad-diff}), and we have assumed that 
all zeros of $\phi$ are simple 
(see Assumption \ref{ass:zeros-and-poles}), 
and there is no recurrent trajectory 
(see Assumption \ref{ass:trajectory}). 
In this subsection we also assume that 
\begin{itemize} 
\item[(A)]
the Stokes graph $G(\phi)$ in Definition \ref{def:Stokes-curve}
is saddle-free.
\end{itemize}
Furthermore, let us temporarily assume that 
\begin{itemize}
\item[(B)]
$\phi$ has no simple poles.
 \end{itemize}
Under such assumptions,
any Stokes region of $G(\phi)$ falls into one of  three types,
i.e.,
{\em regular horizontal strip, degenerate horizontal strip, or half plane},
depicted in Figure \ref{fig:Stokes1}  (a)--(c) 
(see \cite[Section 3.5]{Bridgeland13}).
\begin{figure}
\begin{center}
\begin{pspicture}(-2.2,-1.8)(9,1.5)
\psset{linewidth=0.5pt}
\psset{fillstyle=solid, fillcolor=black}
\pscircle(-1.5,0){0.08} 
\pscircle(1.5,0){0.08} 
\psset{fillstyle=none}
\psline(0,1.2)(0,0.6)
\psline(0,-1.2)(0,-0.6)
\psline[linestyle=dotted,linewidth=0.8pt](1.5,0)(-1.5,0)
%
\pscurve(1.5,0)(1.4,0)(0.6,0.25)(0,0.6)
\pscurve(-1.5,0)(-1.4,0)(-0.6,0.25)(0,0.6)
\pscurve(1.5,0)(1.4,0)(0.6,-0.25)(0,-0.6)
\pscurve(-1.5,0)(-1.4,0)(-0.6,-0.25)(0,-0.6)
\rput[c]{0}(0,-0.6){$\times$}
\rput[c]{0}(0,0.6){$\times$}
\rput[c]{0}(0,-1.6){(a)}
\rput[c]{0}(-2,0){
\psset{fillstyle=solid, fillcolor=black}
\pscircle(5,-1){0.08} 
\pscircle(5,0.6){0.08} 
\psset{fillstyle=none}
\pscurve(5,-1)(5.3,-0.6)(5.5,-0.2)(5.6,0.8)(5,1.2)
(4.4,0.8)(4.4,0.2)(4.6,-0.2)
\pscurve(5,-1)(4.8,-0.8)(4.68,-0.6)(4.6,-0.2)
\pscurve(4.6,-0.2)(5,0)(5.25,0.6)(5,0.78)(4.85,0.6)
(5,0.48)(5.1,0.6)(5,0.67)
\pscurve[linestyle=dotted,linewidth=0.8pt](5,-1)(5.4,0)
(5.4,0.6)(5,0.85)(4.78,0.6)(5,0.42)%
(5.15,0.6)(5,0.73)(4.9,0.6)(5,0.52)
\rput[c]{0}(4.6,-0.2){$\times$}
\rput[c]{0}(5,-1.6){(b)}
}
\rput[c]{0}(-4.5,0){
\psset{fillstyle=solid, fillcolor=black}
\pscircle(10,-1){0.08} 
\psset{fillstyle=none}
\psline(10,1)(10,0.5)
\pscurve(10,-1)(10.5,-0.5)(10.5,0)(10,0.5)
\pscurve(10,-1)(9.5,-0.5)(9.5,0)(10,0.5)
\pscurve[linestyle=dotted,linewidth=0.8pt](10,-1)(10.2,-0.78)
(10.3,-0.25)(10,-0.05)(9.7,-0.25)(9.8,-0.78)(10,-1)
\rput[c]{0}(10,0.5){$\times$}
\rput[c]{0}(10,-1.6){(c)}
}
%
%
%
\rput[c]{0}(7,-1)
{
\psset{linewidth=0.5pt}
\pscircle[fillstyle=solid, fillcolor=black](1,0){0.08} 
%
\rput[c]{0}(1,1.5){$\times$}
\rput[c]{0}(1,1){$\otimes$}
\rput[c]{0}(1,-0.6){(d)}
\psline(1,0.9)(1,0)
\psline(1,1.5)(1,2)
\pscurve(1,0)(0.5,0.5)(0.5,1)(1,1.5)
\pscurve(1,0)(1.5,0.5)(1.5,1)(1,1.5)
\pscurve[linestyle=dotted,linewidth=0.8pt](1,0)(0.8,1.1)(1,1.25)(1.2,1.1)(1,0)
%
}
\end{pspicture}
\end{center}
\caption{Patterns of Stokes regions:
(a) regular horizontal strip, 
(b) degenerate horizontal strip,
(c) half plane,
and (d) horizontal strip of simple-pole type.
The dotted lines are representatives of trajectories.}
\label{fig:Stokes1}
\end{figure}
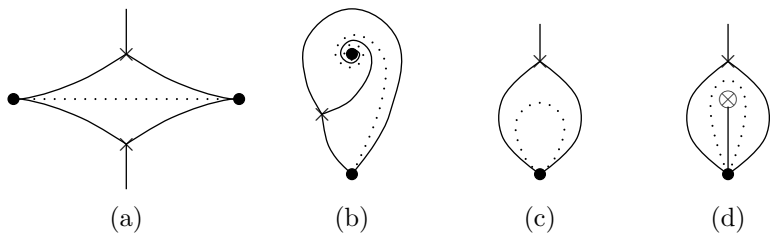
The {\em Stokes triangulation $T(\phi)$ of a bordered surface 
$(\bfS(\phi),\bfM(\phi),\bfA(\phi))$ associated with $G(\phi)$} 
was introduced in \cite[Section 6]{Iwaki14a}, refining
  the ideas of \cite{Kawai05,Gaiotto09,Bridgeland13}.
Let us roughly summarize the construction as follows.
 \begin{itemize}
 \item $\bfS(\phi)$: This is obtained from $\Sigma$ 
 (as a topological surface) by cutting out small holes around  
 poles of $\phi$ of order greater than 2.
 \item $\bfM(\phi)$: At each boundary component,
 $r-2$ marked points are attached, where $r\geq 3$ is the order
 of the corresponding pole. The poles of order 2 of $\phi$
 are the punctures.
 \item $\bfA(\phi)$: The turning points are the midpoints.
 \item Each arc of $T(\phi)$ corresponds to 
a representative of trajectories in a horizontal strip.
 To be more specific, inner arcs in
type iii pieces are the ones for degenerate horizontal strips, while
the other arcs are for regular horizontal strips.
 \item Each edge on the boundary of $\bfS(\phi)$
 corresponds to a representative of trajectories
 in a half plane. 
\item
A label of horizontal strips of $G(\phi)$ gives a label of $T(\phi)$.
 \end{itemize}

Now we remove the condition (B) and allow $\phi$ to have simple poles.
Then, we have an additional type of Stokes regions
given in Figure \ref{fig:Stokes1} (d),
which we call a {\em horizontal strip of simple-pole type}.
Then, in addition to the above, we have
the following correspondence shown in Figure \ref{fig:orbi1}. 
\begin{itemize}
\item ${\bf Q}(\phi)$: The simple poles of $\phi$ 
are the orbifold points.
\item Each pending arc of $T(\phi)$ corresponds to a representative 
of trajectories around a simple pole.
\end{itemize}
As seen in Figure \ref{fig:orbi1}, there are  
two possible arrangements of horizontal strips of simple-pole type
in Stokes graphs which correspond to type i and type ii pieces,
respectively.

In summary, we obtain the labeled Stokes triangulation $T(\phi)$ 
of the orbifold 
$(\bfS(\phi),\allowbreak\bfM(\phi),\allowbreak\bfQ(\phi),\bfA(\phi))$
which is associated with a saddle-free Stokes graph $G(\phi)$.

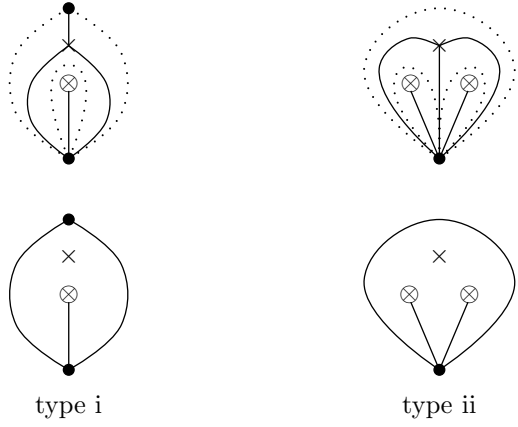
\begin{figure}
\begin{center}
\begin{pspicture}(0,-0.5)(2,2.2)
%
\psset{linewidth=0.5pt}
\pscircle[fillstyle=solid, fillcolor=black](1,0){0.08} 
\pscircle[fillstyle=solid, fillcolor=black](1,2){0.08} 
\rput[c]{0}(1,1.5){$\times$}
\rput[c]{0}(1,1){$\otimes$}
\psline(1,0.9)(1,0)
\psline(1,1.5)(1,2)
\pscurve(1,0)(0.5,0.5)(0.5,1)(1,1.5)
\pscurve(1,0)(1.5,0.5)(1.5,1)(1,1.5)
\pscurve[linestyle=dotted,linewidth=0.8pt](1,0)(0.8,1.1)(1,1.25)(1.2,1.1)(1,0)
\pscurve[linestyle=dotted,linewidth=0.8pt](1,0)(0.3,0.6)(0.3,1.4)(1,2)
\pscurve[linestyle=dotted,linewidth=0.8pt](1,0)(1.7,0.6)(1.7,1.4)(1,2)
\end{pspicture}
\hskip80pt
\begin{pspicture}(0,-0.5)(2,2.2)
%
\psset{linewidth=0.5pt}
\pscircle[fillstyle=solid, fillcolor=black](1,0){0.08} 
\rput[c]{0}(1,1.5){$\times$}
\rput[c]{0}(0.62,1){$\otimes$}
\rput[c]{0}(1.4,1){$\otimes$}

\psline(0.62,0.9)(1,0)
\psline(1.38,0.9)(1,0)
\psline(1,0)(1,1.5)
\pscurve(1,0)(0.2,1.2)(0.6,1.6)(1,1.5)
\pscurve(1,0)(1.8,1.2)(1.4,1.6)(1,1.5)
\pscurve[linestyle=dotted,linewidth=0.8pt](1,0)(0.4,0.9)(0.5,1.2)(0.8,1.1)(1,0)
\pscurve[linestyle=dotted,linewidth=0.8pt](1,0)(1.6,0.9)(1.5,1.2)(1.2,1.1)(1,0)
\pscurve[linestyle=dotted,linewidth=0.8pt](1,0)(0,1.2)(1,2)(2,1.2)(1,0)
\end{pspicture}
\end{center}
\begin{center}
\begin{pspicture}(0,-0.5)(2,2.2)
%
\psset{linewidth=0.5pt}
\pscircle[fillstyle=solid, fillcolor=black](1,0){0.08} 
\pscircle[fillstyle=solid, fillcolor=black](1,2){0.08} 
\rput[c]{0}(1,1.5){$\times$}
\rput[c]{0}(1,1){$\otimes$}
\rput[c]{0}(1,-0.5){type i}
\psline(1,0.9)(1,0)
\pscurve(1,0)(0.3,0.6)(0.3,1.4)(1,2)
\pscurve(1,0)(1.7,0.6)(1.7,1.4)(1,2)
\end{pspicture}
\hskip80pt
\begin{pspicture}(0,-0.5)(2,2.2)
%
\psset{linewidth=0.5pt}
\pscircle[fillstyle=solid, fillcolor=black](1,0){0.08} 
\rput[c]{0}(1,1.5){$\times$}
\rput[c]{0}(0.6,1){$\otimes$}
\rput[c]{0}(1.4,1){$\otimes$}
\rput[c]{0}(1,-0.5){type ii}

\psline(0.62,0.9)(1,0)
\psline(1.38,0.9)(1,0)
\pscurve(1,0)(0,1.2)(1,2)(2,1.2)(1,0)
\end{pspicture}
\end{center}
\caption{Arrangements of 
horizontal strips of simple-pole type in
Stokes graphs (above)
and corresponding Stokes triangulations.} 
\label{fig:orbi1}
\end{figure}

\begin{figure}
\begin{center}
\begin{pspicture}(0,-0.5)(2,2.2)
%
\psset{linewidth=0.5pt}
\pscircle[fillstyle=solid, fillcolor=black](0.4,0.4){0.08} 
\pscircle[fillstyle=solid, fillcolor=black](1.6,0.4){0.08} 
%
\rput[c]{0}(1,1.6){$\otimes$}
\rput[c]{0}(1,1){$\times$}
\rput[c]{0}(1,-0.5){type i-a}
\psline[linewidth=2pt](1,1)(1,1.5)
\psline(1,1)(0.4,0.4)
\psline(1,1)(1.6,0.4)
\end{pspicture}
\begin{pspicture}(0,-0.5)(2,2.2)
%
\psset{linewidth=0.5pt}
\pscircle[fillstyle=solid, fillcolor=black](1,0){0.08} 
%
\rput[c]{0}(1,2){$\otimes$}
\rput[c]{0}(1,1.4){$\times$}
\rput[c]{0}(1,-0.5){type i-b}
\psline[linewidth=2pt](1,1.9)(1,1.4)
\pscurve(1,0)(0.5,0.5)(0.5,0.9)(1,1.4)
\pscurve(1,0)(1.5,0.5)(1.5,0.9)(1,1.4)
\end{pspicture}
\begin{pspicture}(0,-0.5)(2,2.2)
%
\psset{linewidth=0.5pt}
\pscircle[fillstyle=solid, fillcolor=black](1,2){0.08} 
%
\rput[c]{0}(1,0){$\times$}
\rput[c]{0}(1,0.6){$\otimes$}
\rput[c]{0}(1,-0.5){type i-c}
\psline[linewidth=2pt](1,0.5)(1,0)
\pscurve(1,0)(0.3,0.6)(0.3,1.4)(1,2)
\pscurve(1,0)(1.7,0.6)(1.7,1.4)(1,2)
%
%
\end{pspicture}
\begin{pspicture}(0,-0.5)(2,2.2)
%
\psset{linewidth=0.5pt}
\pscircle[fillstyle=solid, fillcolor=black](1,0){0.08} 
\pscircle[fillstyle=solid, fillcolor=black](1,0.9){0.08} 
%
\rput[c]{0}(1,2){$\otimes$}
\rput[c]{0}(1,1.4){$\times$}
\rput[c]{0}(1,0.6){$\times$}
\rput[c]{0}(1,-0.5){type i-d}
\psline[linewidth=2pt](1,1.9)(1,1.4)
\psline(1,0.6)(1,0)
\pscurve(1,0)(0.5,0.5)(0.5,0.9)(1,1.4)
\pscurve(1,0)(1.5,0.5)(1.5,0.9)(1,1.4)
\pscurve(1,0.6)(1.15,0.9)(1,1.05)(0.85,0.9)(1,0.78)(1.1,0.9)(1,0.95)
\pscurve(1,0.6)(0.73,0.9)(1,1.2)(1.3,0.9)(1,0)
\end{pspicture}
\begin{pspicture}(0,-0.5)(2,2.2)
%
\psset{linewidth=0.5pt}
\pscircle[fillstyle=solid, fillcolor=black](1,2){0.08} 
\pscircle[fillstyle=solid, fillcolor=black](1,1.1){0.08} 
\rput[c]{0}(1,0){$\times$}
\rput[c]{0}(1,0.6){$\otimes$}
\rput[c]{0}(1,1.4){$\times$}
\rput[c]{0}(1,-0.5){type i-e}
\psline[linewidth=2pt](1,0.5)(1,0)
\psline(1,2)(1,1.4)
\pscurve(1,0)(0.3,0.6)(0.3,1.4)(1,2)
\pscurve(1,0)(1.7,0.6)(1.7,1.4)(1,2)
\pscurve(1,1.4)(1.15,1.1)(1,0.95)(0.85,1.1)(1,1.22)(1.1,1.1)(1,1.05)
\pscurve(1,1.4)(0.73,1.1)(1,0.8)(1.3,1.1)(1,2)
\end{pspicture}
\end{center}
\vskip10pt
\begin{center}
\begin{pspicture}(0,-0.5)(2,2.2)
%
\psset{linewidth=0.5pt}
\pscircle[fillstyle=solid, fillcolor=black](1,0){0.08} 
%
\rput[c]{0}(1,2){$\otimes$}
\rput[c]{0}(1,1.4){$\times$}
\rput[c]{0}(1,0.6){$\otimes$}
\rput[c]{0}(1,-0.5){type ii-a}
\psline[linewidth=2pt](1,1.9)(1,1.4)
\psline(1,0.5)(1,0)
\pscurve(1,0)(0.5,0.5)(0.5,0.9)(1,1.4)
\pscurve(1,0)(1.5,0.5)(1.5,0.9)(1,1.4)
\end{pspicture}
\begin{pspicture}(0,-0.5)(2,2.2)
%
\psset{linewidth=0.5pt}
\pscircle[fillstyle=solid, fillcolor=black](1,2){0.08} 
\rput[c]{0}(1,0){$\times$}
\rput[c]{0}(1,0.6){$\otimes$}
\rput[c]{0}(1,1.4){$\otimes$}
\rput[c]{0}(1,-0.5){type ii-b}
\psline[linewidth=2pt](1,0.5)(1,0)
\psline(1,2)(1,1.5)
\pscurve(1,0)(0.3,0.6)(0.3,1.4)(1,2)
\pscurve(1,0)(1.7,0.6)(1.7,1.4)(1,2)
\end{pspicture}
\end{center}
\caption{Local configurations near
type II Stokes segment.
Type II Stokes segments are depicted in bold lines.} 
\label{fig:orbi2}
\end{figure}
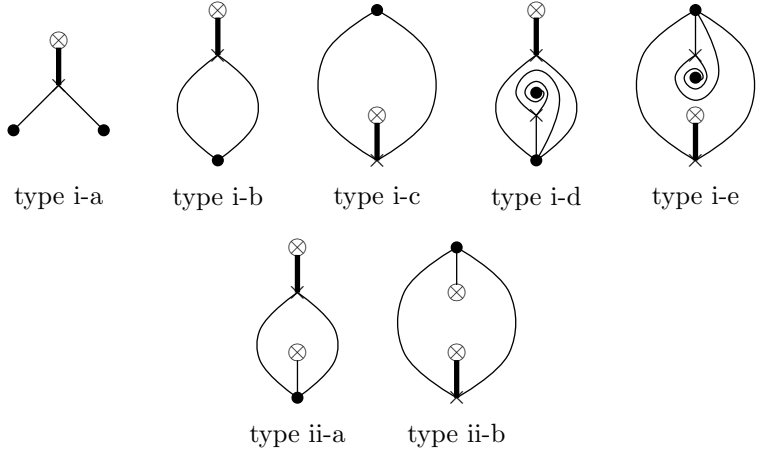

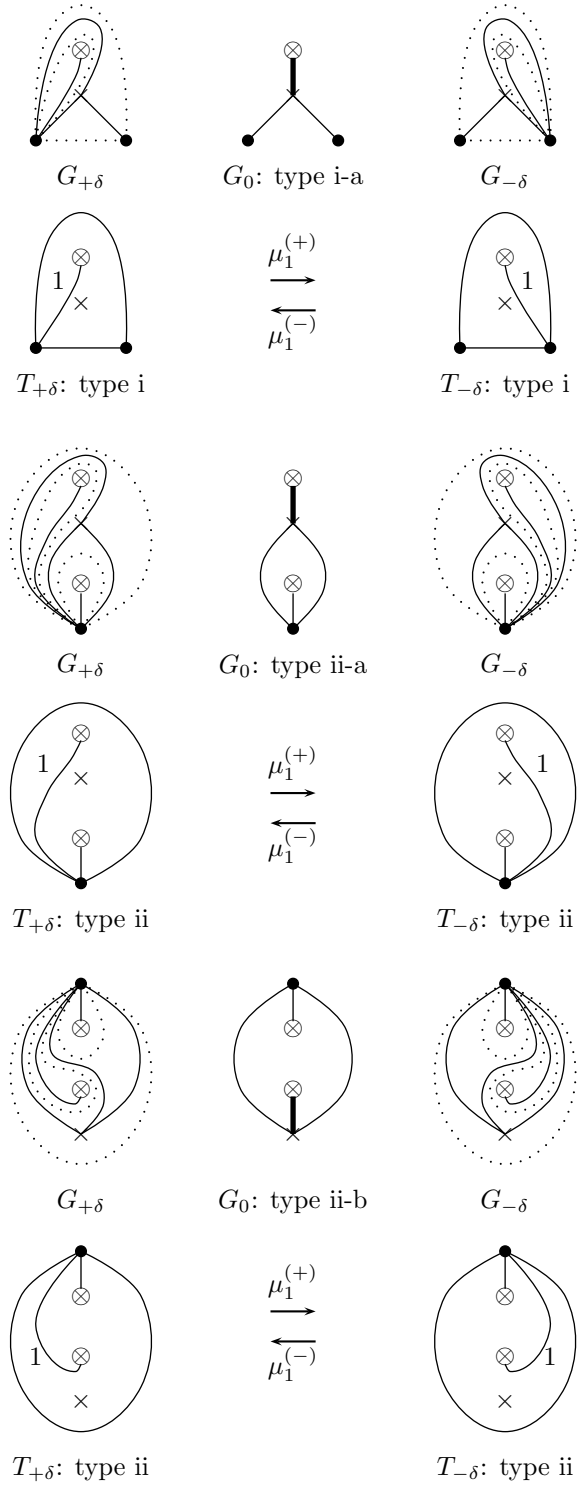
\begin{figure}
\begin{center}
\begin{pspicture}(0,-0.1)(2,2.2)
%
\psset{linewidth=0.5pt}
\pscircle[fillstyle=solid, fillcolor=black](0.4,0.4){0.08} 
\pscircle[fillstyle=solid, fillcolor=black](1.6,0.4){0.08} 
%
\rput[c]{0}(1,1.6){$\otimes$}
\rput[c]{0}(1,1){$\times$}
\rput[c]{0}(1,-0.1){$G_{+\delta}$}
%
\psline(1,1)(0.4,0.4)
\psline(1,1)(1.6,0.4)
\pscurve(0.4,0.4)(0.95,1.3)(1,1.5)
\pscurve(0.4,0.4)(0.95,2)(1.25,1.9)(1,1)
\pscurve[linestyle=dotted,linewidth=0.8pt](0.4,0.4)(0.8,1.6)
(1.1,1.8)(1.15,1.5)(0.4,0.4)
\pscurve[linestyle=dotted,linewidth=0.8pt](0.4,0.4)(0.6,1.9)
(1,2.2)(1.4,1.9)(1.6,0.4)
\psline[linestyle=dotted,linewidth=0.8pt](0.4,0.4)(1.6,0.4)
\end{pspicture}
\hskip20pt
\begin{pspicture}(0,-0.1)(2,2.2)
%
\psset{linewidth=0.5pt}
\pscircle[fillstyle=solid, fillcolor=black](0.4,0.4){0.08} 
\pscircle[fillstyle=solid, fillcolor=black](1.6,0.4){0.08} 
%
\rput[c]{0}(1,1.6){$\otimes$}
\rput[c]{0}(1,1){$\times$}
\rput[c]{0}(1,-0.1){$G_{0}$: type i-a}
\psline[linewidth=2pt](1,1)(1,1.5)
\psline(1,1)(0.4,0.4)
\psline(1,1)(1.6,0.4)
\end{pspicture}
\hskip20pt
\begin{pspicture}(0,-0.1)(2,2.2)
%
\psset{linewidth=0.5pt}
\pscircle[fillstyle=solid, fillcolor=black](0.4,0.4){0.08} 
\pscircle[fillstyle=solid, fillcolor=black](1.6,0.4){0.08} 
%
\rput[c]{0}(1,1.6){$\otimes$}
\rput[c]{0}(1,1){$\times$}
\rput[c]{0}(1,-0.1){$G_{-\delta}$}
%
\psline(1,1)(0.4,0.4)
\psline(1,1)(1.6,0.4)
\pscurve(1.6,0.4)(1.05,1.3)(1,1.5)
\pscurve(1.6,0.4)(1.05,2)(0.75,1.9)(1,1)
\pscurve[linestyle=dotted,linewidth=0.8pt](1.6,0.4)
(1.2,1.6)(0.9,1.8)(0.85,1.5)(1.6,0.4)
\pscurve[linestyle=dotted,linewidth=0.8pt](0.4,0.4)
(0.6,1.9)(1,2.2)(1.4,1.9)(1.6,0.4)
\psline[linestyle=dotted,linewidth=0.8pt](0.4,0.4)(1.6,0.4)
\end{pspicture}
\end{center}
\vskip10pt
\begin{center}
\begin{pspicture}(0,-0.1)(2,2.2)
%
\psset{linewidth=0.5pt}
\pscircle[fillstyle=solid, fillcolor=black](0.4,0.4){0.08} 
\pscircle[fillstyle=solid, fillcolor=black](1.6,0.4){0.08} 
%
\rput[c]{0}(1,1.6){$\otimes$}
\rput[c]{0}(1,1){$\times$}
\rput[c]{0}(1,-0.1){$T_{+\delta}$: type i}
\rput[c]{0}(0.7,1.3){1}
%
%
\pscurve(0.4,0.4)(0.95,1.3)(1,1.5)
\pscurve(0.4,0.4)(0.6,1.9)(1,2.2)(1.4,1.9)(1.6,0.4)
\psline(0.4,0.4)(1.6,0.4)
\end{pspicture}
\hskip20pt
\begin{pspicture}(0,-0.1)(2,2.2)
%
\psline[arrows=->](0.7,1.3)(1.3,1.3)
\psline[arrows=<-](0.7,0.9)(1.3,0.9)
\rput[c]{0}(1,1.7){$\mu_{1}^{(+)}$}
\rput[c]{0}(1,0.6){$\mu_{1}^{(-)}$}
\end{pspicture}
\hskip20pt
\begin{pspicture}(0,-0.1)(2,2.2)
%
\psset{linewidth=0.5pt}
\pscircle[fillstyle=solid, fillcolor=black](0.4,0.4){0.08} 
\pscircle[fillstyle=solid, fillcolor=black](1.6,0.4){0.08} 
%
\rput[c]{0}(1,1.6){$\otimes$}
\rput[c]{0}(1,1){$\times$}
\rput[c]{0}(1,-0.1){$T_{-\delta}$: type i}
\rput[c]{0}(1.3,1.3){1}
%
%
\pscurve(1.6,0.4)(1.05,1.3)(1,1.5)
\pscurve(0.4,0.4)(0.6,1.9)(1,2.2)(1.4,1.9)(1.6,0.4)
\psline(0.4,0.4)(1.6,0.4)
\end{pspicture}
\end{center}
\vskip15pt
\begin{center}
\begin{pspicture}(0,-0.5)(2,2.6)
%
\psset{linewidth=0.5pt}
\pscircle[fillstyle=solid, fillcolor=black](1,0){0.08} 
%
\rput[c]{0}(1,2){$\otimes$}
\rput[c]{0}(1,1.4){$\times$}
\rput[c]{0}(1,0.6){$\otimes$}
\rput[c]{0}(1,-0.5){$G_{+\delta}$}
\psline(1,0.47)(1,0)
\pscurve(1,0)(0.6,0.5)(0.6,0.9)(1,1.4)
\pscurve(1,0)(1.4,0.5)(1.4,0.9)(1,1.4)
\pscurve(1,0)(0.4,0.6)(0.55,1.2)(0.9,1.7)(1,1.9)
\pscurve(1,0)(0.3,0.6)(0.3,1.6)(1,2.3)(1.3,2.2)(1.3,1.9)(1,1.4)
\pscurve[linestyle=dotted,linewidth=0.8pt](1,0)(0.35,0.6)
(0.35,1.2)(1,2.2)(1.2,2.1)(1.2,1.95)
(1.1,1.7)(0.65,1.2)(0.45,0.6)(1,0)
\pscurve[linestyle=dotted,linewidth=0.8pt](1,0)(0.2,0.6)
(0.2,1.8)(1,2.4)(1.8,1.8)(1.8,0.6)(1,0)
\pscurve[linestyle=dotted,linewidth=0.8pt](1,0)(0.7,0.7)(1,1)(1.3,0.7)(1,0)
\end{pspicture}
\hskip20pt
\begin{pspicture}(0,-0.5)(2,2.2)
%
\psset{linewidth=0.5pt}
\pscircle[fillstyle=solid, fillcolor=black](1,0){0.08} 
%
\rput[c]{0}(1,2){$\otimes$}
\rput[c]{0}(1,1.4){$\times$}
\rput[c]{0}(1,0.6){$\otimes$}
\rput[c]{0}(1,-0.5){$G_{0}$: type ii-a}
\psline[linewidth=2pt](1,1.9)(1,1.4)
\psline(1,0.47)(1,0)
\pscurve(1,0)(0.6,0.5)(0.6,0.9)(1,1.4)
\pscurve(1,0)(1.4,0.5)(1.4,0.9)(1,1.4)
\end{pspicture}
\hskip20pt
\begin{pspicture}(0,-0.5)(2,2.2)
%
\psset{linewidth=0.5pt}
\pscircle[fillstyle=solid, fillcolor=black](1,0){0.08} 
%
\rput[c]{0}(1,2){$\otimes$}
\rput[c]{0}(1,1.4){$\times$}
\rput[c]{0}(1,0.6){$\otimes$}
\rput[c]{0}(1,-0.5){$G_{-\delta}$}
\psline(1,0.47)(1,0)
\pscurve(1,0)(0.6,0.5)(0.6,0.9)(1,1.4)
\pscurve(1,0)(1.4,0.5)(1.4,0.9)(1,1.4)
%
\pscurve(1,0)(1.6,0.6)(1.45,1.2)(1.1,1.7)(1,1.9)
\pscurve(1,0)(1.7,0.6)(1.7,1.6)(1,2.3)(0.7,2.2)(0.7,1.9)(1,1.4)
\pscurve[linestyle=dotted,linewidth=0.8pt](1,0)(1.65,0.6)
(1.65,1.2)(1,2.2)(0.8,2.1)(0.8,1.95)
(0.9,1.7)(1.35,1.2)(1.55,0.6)(1,0)
\pscurve[linestyle=dotted,linewidth=0.8pt](1,0)(0.2,0.6)
(0.2,1.8)(1,2.4)(1.8,1.8)(1.8,0.6)(1,0)
\pscurve[linestyle=dotted,linewidth=0.8pt](1,0)(0.7,0.7)
(1,1)(1.3,0.7)(1,0)
\end{pspicture}
\end{center}
\vskip5pt
\begin{center}
\begin{pspicture}(0,-0.5)(2,2.6)
%
\psset{linewidth=0.5pt}
\pscircle[fillstyle=solid, fillcolor=black](1,0){0.08} 
%
\rput[c]{0}(1,2){$\otimes$}
\rput[c]{0}(1,1.4){$\times$}
\rput[c]{0}(1,0.6){$\otimes$}
\rput[c]{0}(0.5,1.6){1}
\rput[c]{0}(1,-0.5){$T_{+\delta}$: type ii}
\psline(1,0.48)(1,0)
\pscurve(1,0)(0.4,0.6)(0.55,1.2)(0.9,1.7)(1,1.9)
\pscurve(1,0)(0.2,0.6)(0.2,1.8)(1,2.4)(1.8,1.8)(1.8,0.6)(1,0)
%
\end{pspicture}
\hskip20pt
\begin{pspicture}(0,-0.5)(2,2.2)
\psline[arrows=->](0.7,1.2)(1.3,1.2)
\psline[arrows=<-](0.7,0.8)(1.3,0.8)
\rput[c]{0}(1,1.6){$\mu_{1}^{(+)}$}
\rput[c]{0}(1,0.5){$\mu_{1}^{(-)}$}
\end{pspicture}
\hskip20pt
\begin{pspicture}(0,-0.5)(2,2.2)
%
\psset{linewidth=0.5pt}
\pscircle[fillstyle=solid, fillcolor=black](1,0){0.08} 
%
\rput[c]{0}(1,2){$\otimes$}
\rput[c]{0}(1,1.4){$\times$}
\rput[c]{0}(1,0.6){$\otimes$}
\rput[c]{0}(1.5,1.6){1}
\rput[c]{0}(1,-0.5){$T_{-\delta}$: type ii}
\psline(1,0.48)(1,0)
%
\pscurve(1,0)(1.6,0.6)(1.45,1.2)(1.1,1.7)(1,1.9)
\pscurve(1,0)(0.2,0.6)(0.2,1.8)(1,2.4)(1.8,1.8)(1.8,0.6)(1,0)
%
\end{pspicture}
\end{center}
\vskip15pt
\begin{center}
\begin{pspicture}(0,-0.9)(2,2.2)
%
\psset{linewidth=0.5pt}
\pscircle[fillstyle=solid, fillcolor=black](1,2){0.08} 
\rput[c]{0}(1,0){$\times$}
\rput[c]{0}(1,0.6){$\otimes$}
\rput[c]{0}(1,1.4){$\otimes$}
\rput[c]{0}(1,-0.9){$G_{+\delta}$}
\psline(1,2)(1,1.5)
\pscurve(1,0)(0.3,0.6)(0.3,1.4)(1,2)
\pscurve(1,0)(1.7,0.6)(1.7,1.4)(1,2)
\pscurve[linestyle=dotted,linewidth=0.8pt](1,2)(0.2,1.4)
(0.2,0.2)(1,-0.4)(1.8,0.2)(1.8,1.4)(1,2)
\pscurve[linestyle=dotted,linewidth=0.8pt](1,2)(0.7,1.3)
(1,1)(1.3,1.3)(1,2)
\pscurve(1,2)(0.4,1)(0.92,0.4)(1,0.5)
\pscurve(1,2)(0.6,1.1)(1.2,0.8)(1.3,0.4)(1,0)
\pscurve[linestyle=dotted,linewidth=0.8pt](1,2)(0.5,1)
(1,0.8)(1.2,0.6)(1,0.3)(0.4,0.6)(0.4,1.4)(1,2)
\end{pspicture}
\hskip20pt
\begin{pspicture}(0,-0.9)(2,2.2)
%
\psset{linewidth=0.5pt}
\pscircle[fillstyle=solid, fillcolor=black](1,2){0.08} 
\rput[c]{0}(1,0){$\times$}
\rput[c]{0}(1,0.6){$\otimes$}
\rput[c]{0}(1,1.4){$\otimes$}
\rput[c]{0}(1,-0.9){$G_{0}$: type ii-b}
\psline[linewidth=2pt](1,0.5)(1,0)
\psline(1,2)(1,1.5)
\pscurve(1,0)(0.3,0.6)(0.3,1.4)(1,2)
\pscurve(1,0)(1.7,0.6)(1.7,1.4)(1,2)
\end{pspicture}
\hskip20pt
\begin{pspicture}(0,-0.9)(2,2.2)
%
\psset{linewidth=0.5pt}
\pscircle[fillstyle=solid, fillcolor=black](1,2){0.08} 
\rput[c]{0}(1,0){$\times$}
\rput[c]{0}(1,0.6){$\otimes$}
\rput[c]{0}(1,1.4){$\otimes$}
\rput[c]{0}(1,-0.9){$G_{-\delta}$}
\psline(1,2)(1,1.5)
\pscurve(1,0)(0.3,0.6)(0.3,1.4)(1,2)
\pscurve(1,0)(1.7,0.6)(1.7,1.4)(1,2)
\pscurve[linestyle=dotted,linewidth=0.8pt](1,2)(0.2,1.4)
(0.2,0.2)(1,-0.4)(1.8,0.2)(1.8,1.4)(1,2)
\pscurve[linestyle=dotted,linewidth=0.8pt](1,2)(0.7,1.3)(1,1)(1.3,1.3)(1,2)
\pscurve(1,2)(1.6,1)(1.07,0.4)(1,0.5)
\pscurve(1,2)(1.4,1.1)(0.8,0.8)(0.7,0.4)(1,0)
\pscurve[linestyle=dotted,linewidth=0.8pt](1,2)(1.5,1)
(1,0.8)(0.8,0.6)(1,0.3)(1.6,0.6)(1.6,1.4)(1,2)
\end{pspicture}
\end{center}
\vskip10pt
\begin{center}
\begin{pspicture}(0,-0.9)(2,2.2)
%
\psset{linewidth=0.5pt}
\pscircle[fillstyle=solid, fillcolor=black](1,2){0.08} 
\rput[c]{0}(1,0){$\times$}
\rput[c]{0}(1,0.6){$\otimes$}
\rput[c]{0}(1,1.4){$\otimes$}
\rput[c]{0}(1,-0.9){$T_{+\delta}$: type ii}
\rput[c]{0}(0.4,0.6){1}
\psline(1,2)(1,1.5)
%
\pscurve(1,2)(0.2,1.4)(0.2,0.2)(1,-0.4)(1.8,0.2)(1.8,1.4)(1,2)
%
\pscurve(1,2)(0.4,1)(0.93,0.4)(1,0.5)
\end{pspicture}
\hskip20pt
\begin{pspicture}(0,-0.9)(2,2.2)
\psline[arrows=->](0.7,1.2)(1.3,1.2)
\psline[arrows=<-](0.7,0.8)(1.3,0.8)
\rput[c]{0}(1,1.6){$\mu_{1}^{(+)}$}
\rput[c]{0}(1,0.5){$\mu_{1}^{(-)}$}
\end{pspicture}
\hskip20pt
\begin{pspicture}(0,-0.9)(2,2.2)
%
\psset{linewidth=0.5pt}
\pscircle[fillstyle=solid, fillcolor=black](1,2){0.08} 
\rput[c]{0}(1,0){$\times$}
\rput[c]{0}(1,0.6){$\otimes$}
\rput[c]{0}(1,1.4){$\otimes$}
\rput[c]{0}(1,-0.9){$T_{-\delta}$: type ii}
\rput[c]{0}(1.6,0.6){1}
\psline(1,2)(1,1.5)
%
%
\pscurve(1,2)(0.2,1.4)(0.2,0.2)(1,-0.4)(1.8,0.2)(1.8,1.4)(1,2)
%
\pscurve(1,2)(1.6,1)(1.07,0.4)(1,0.5)
\end{pspicture}
\end{center}
\caption{Mutations of Stokes graphs and signed flips of Stokes triangulations.} 
\label{fig:orbi3}
\end{figure}

 \subsection{Mutation of Stokes graphs}
 \label{subsec:mutation1}
 Now we are at the important stage of combining  two main ideas: 
 the mutation of Stokes graphs of the Schr{\"o}dinger equation 
 (see Section \ref{section:stokes-auto})
 and the signed flips of Stokes triangulations.
 Here, we concentrate on the situation
 where the jump formula with respect to a type II Stokes segment
in Theorem \ref{thm:Stokes-auto-II} is relevant.
See \cite[Section 6.3]{Iwaki14a} for a more general setting.

Let $\phi$ be the quadratic differential associated with the 
Schr{\"o}dinger equation \eqref{eq:Sch}, and $G_0=G(\phi)$ 
be its Stokes graph. We assume that
\begin{itemize}
\item $G_0$ has a unique Stokes segment, which is of type II.
\end{itemize}
Then, there are seven possible local configurations near a type II
Stokes segment in $G_0$ depicted in Figure \ref{fig:orbi2}.
We call  five of them {\em type i configurations\/} and two of them
 {\em type ii configurations\/}
as specified in Figure \ref{fig:orbi2}.

Let $Q^{(\theta)}(z,\eta)$ be the $S^1$-family of potentials 
defined in \eqref{eq:S1-potentials}. 
Take a sufficiently small $\delta>0$ and 
consider the Stokes graphs $G_{\pm \delta}=G(e^{\pm 2i\delta}\phi)$
for $Q^{({\pm 2i\delta})}(z,\eta)$.
Then, $G_{\pm \delta}$ are saddle-free 
as explained in Section \ref{section:stokes-auto}.
Let $T_{\pm\delta} = T(e^{\pm 2 i \delta} \phi)$ be the  Stokes triangulations
associated with $G_{\pm \delta}$ defined in the previous subsection.
Note that the zeros and poles are common for $G_{\pm \delta}$;
therefore, marked points, orbifold points, and midpoints, respectively, 
in $T_{-\delta}$ and  $T_{+\delta}$ are naturally identified.

The following property is the conclusion of this section.

\begin{thm}
\label{thm:mut1}
 If $G(\phi)$ contains a type i (resp. type ii) configuration, 
 each of the Stokes triangulation $T_{+\delta}$ and $T_{-\delta}$ 
 contains a type i (resp. type ii) piece
 such that
 $T_{\pm \delta}$
undergo signed flips $T_{-\delta}=\mu^{(+)}_k(T_{+\delta})$
and $T_{+\delta}=\mu^{(-)}_k(T_{-\delta})$
at the pending arcs therein
under the natural identification of the labels of
$T_{+ \delta}$ and $T_{- \delta}$.
\end{thm}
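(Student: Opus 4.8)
The plan is to reduce the statement to a purely local verification, configuration by configuration, exploiting the fact that both the reduction of the Stokes segment and the passage from a Stokes graph to its Stokes triangulation are local operations. First I would record that, since $G_0=G(\phi)$ has a \emph{unique} Stokes segment $\ell_0$, for sufficiently small $\delta>0$ the saddle-free graphs $G_{+\delta}$ and $G_{-\delta}$ coincide as foliations outside an arbitrarily small neighborhood of $\ell_0$; this is the content of the reduction picture in Figure \ref{fig:saddle-reduction} together with Assumption \ref{ass:trajectory}. Because the assignment $G(\phi)\mapsto T(\phi)$ described in Section \ref{subsection:SG-to-ST} reads off one arc per horizontal strip and one midpoint per turning point in a strictly local manner, it follows that $T_{+\delta}$ and $T_{-\delta}$ are identical arc by arc away from the single piece that contains the turning point $a$ and the simple pole $s$ of $\ell_0$. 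Under the natural identification of marked points, orbifold points, and midpoints this reduces the theorem to comparing the two triangulations inside that one piece.

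Next I would carry out the local comparison for each of the seven configurations listed in Figure \ref{fig:orbi2}. For a fixed configuration one traces, as $\theta$ decreases through $0$ from $+\delta$ to $-\delta$, how the horizontal strip of simple-pole type attached to $s$ and the adjacent regular strips deform; the representative trajectories then determine the arcs of $T_{\pm\delta}$ by the dictionary of Section \ref{subsection:SG-to-ST}. The key point is that the only arc that moves is the \emph{pending arc} $\alpha_k$ ending at the orbifold point $s$: in a type i configuration its outside endpoint slides from one marked point of the surrounding piece to the other along the boundary, and in a type ii configuration it slides from a marked point back to itself, either along the boundary \emph{or} along the second pending arc in the piece. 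These are precisely the moves defining $\mu_k^{(+)}$ and $\mu_k^{(-)}$ in Figures \ref{fig:flip1} and \ref{fig:flip2}, so in each case $T_{-\delta}$ is obtained from $T_{+\delta}$ by a single signed flip at $\alpha_k$, and conversely. This is exactly the content illustrated for the representative cases in Figure \ref{fig:orbi3}.

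It remains to fix the \emph{sign} of the flip, and this is where I expect the main difficulty to lie. The direction in which the outside endpoint of $\alpha_k$ slides---clockwise versus anticlockwise, i.e.\ $\ve=+$ versus $\ve=-$---must be tied unambiguously to the sign of the rotation $\theta\mapsto e^{2i\theta}\phi$. I would determine it by tracking the orientation convention \eqref{eq:saddle-class} for the cycle $\gamma_0$ surrounding $\ell_0$ together with the sign assignment $\oplus/\ominus$ on Stokes curves introduced in Section \ref{section:proof-of-Stokes-auto}: as $\theta$ passes through $0$ the two Stokes curves bounding the strip of simple-pole type exchange their asymptotic behaviour, and the handedness of this exchange is governed by the sign of ${\rm Re}\bigl(e^{i\theta}\int_s^z\sqrt{Q_0(z)}\,dz\bigr)$ near the endpoints. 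Checking that this handedness yields $T_{-\delta}=\mu_k^{(+)}(T_{+\delta})$ rather than the opposite, uniformly across all five type i and both type ii configurations---and in particular in the type ii case, where the endpoint may slide along the second pending arc and the apparent ambiguity among the infinitely many type ii pieces of Figure \ref{fig:pending2} must be resolved---is the one step that is not purely pictorial and requires the orientation bookkeeping to be done carefully. Once the sign is pinned down in one configuration, compatibility with the mutation $B(T_{-\delta})=\mu_k(B(T_{+\delta}))$ of signed adjacency matrices (Proposition \ref{prop:bT1}) propagates it consistently to the remaining ones.
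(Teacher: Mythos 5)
Your proposal is correct and follows essentially the same route as the paper: the paper's proof of Theorem \ref{thm:mut1} is exactly the local, configuration-by-configuration inspection of the seven cases in Figure \ref{fig:orbi2}, with the three representative cases displayed in Figure \ref{fig:orbi3} and the rest declared similar, and your locality reduction and sign bookkeeping just make explicit what the figures encode. One caveat: your closing remark that Proposition \ref{prop:bT1} ``propagates'' the sign to the remaining configurations does not work, since both $\mu_k^{(+)}$ and $\mu_k^{(-)}$ induce the same underlying flip and hence the same matrix mutation, so $B(T_{-\delta})=\mu_k(B(T_{+\delta}))$ is blind to $\ve$; this is harmless only because you already commit to checking the handedness directly in every configuration.
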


In Figure \ref{fig:orbi3}
we show explicitly how this happens
in  three essential cases of type i-a, ii-a, and ii-b.
The rest of the cases are similar.

\section{Mutation formula of Voros symbols for signed flips}
\label{section:main-result}

\subsection{Simple paths and simple cycles}

In this subsection we extend the notion of the
{\em simple paths\/} and the {\em simple cycles\/} 
of a saddle-free Stokes graph defined in \cite{Iwaki14a} 
to the simple pole case.

Let $\phi$ be the quadratic differential associated with 
the Schr{\"o}dinger equation \eqref{eq:Sch}. 
Recall that we call elements of 
$H_1(\hat{\Sigma}\setminus (\hat{P}_0\cup
\hat{P}_{\rm s}),\hat{P}_{\infty})$ and
$H_1(\hat{\Sigma}\setminus \hat{P})$
  {\em paths\/} and {\em cycles\/},
respectively (see Section \ref{section:Voros-symbols}).
Let $\beta^*$ (resp., $\gamma^*$)
 be the image of a path $\beta$ (resp.,  a cycle  $\gamma$) 
 by the  covering involution 
 of $\hat{\Sigma}$ while keeping the direction.
Let
\begin{align}
\label{eq:sym1}
\mathrm{Sym}(H_1(\hat{\Sigma}\setminus (\hat{P}_0\cup
\hat{P}_{\rm s}),\hat{P}_{\infty}))
&=
\{ \beta \in H_1(\hat{\Sigma}\setminus (\hat{P}_0\cup
\hat{P}_{\rm s}),\hat{P}_{\infty})
\mid
\beta^*=\beta
\},\\
\mathrm{Sym}(H_1(\hat{\Sigma}\setminus \hat{P}))
&=
\{ \gamma \in H_1(\hat{\Sigma}\setminus \hat{P})
\mid
\gamma^*=\gamma
\}.
\end{align}
We introduce the quotients of the homology groups,
\begin{align}
{\Gamma}^{\vee}&=
H_1(\hat{\Sigma}\setminus (\hat{P}_0\cup
\hat{P}_{\rm s}),\hat{P}_{\infty})/
\mathrm{Sym}(H_1(\hat{\Sigma}\setminus (\hat{P}_0\cup
\hat{P}_{\rm s}),\hat{P}_{\infty})),\\
{\Gamma}&=
H_1(\hat{\Sigma}\setminus \hat{P})/
\mathrm{Sym}(H_1(\hat{\Sigma}\setminus \hat{P})).
\end{align}

\begin{rem}
By definition, the formal series 
$S_{\rm odd}(z,\eta)$ is anti-invariant under the 
covering involution $\ast$ of $\hat{\Sigma}$: 
$S_{\rm odd}(z^{\ast},\eta) = -S_{\rm odd}(z,\eta)$. 
Hence, 
\begin{equation} 
\int_{\beta} \left( S_{\rm odd}(z,\eta)-\eta\sqrt{Q_0(z)} \right) dz = 0, 
\quad \oint_{\gamma} S_{\rm odd}(z,\eta) dz = 0
\end{equation}
hold for any $\beta \in {\rm Sym}(H_1(\hat{\Sigma}\setminus
(\hat{P}_0 \cup \hat{P}_{\rm s}),\hat{P}_{\infty}))$ 
and any $\gamma \in {\rm Sym}(H_1(\hat{\Sigma}\setminus\hat{P}))$.
Therefore, Voros symbols for $\beta \in \Gamma^{\vee}$ and 
$\gamma \in \Gamma$ are well-defined. 
\end{rem}

In this subsection, we assume 
\begin{itemize}
\item the Stokes graph $G(\phi)$ of \eqref{eq:Sch} is saddle-free.
\end{itemize}
Then, we can assign the {\em simple path\/} $\beta_i\in {\Gamma}^{\vee}$
and the {\em simple cycle\/} $\gamma_i\in {\Gamma}$ 
to each Stokes region $D_i$ ($i=1,\dots,n$) which is not a half plane,
as follows.
If $D_i$ is a regular or degenerate horizontal strip,
they are defined by Figures
\ref{fig:bg1} and \ref{fig:bg2} as in \cite[Section 6]{Iwaki14a}.
Here the signs $\oplus$ and $\ominus$ in the figures 
are assigned by the same rule as in 
Section \ref{section:proof-of-Stokes-auto}.
If $D_i$ is a horizontal strip of simple-pole type,
they are newly defined by Figure \ref{fig:bg3}.
Note that the intersection form \eqref{eq:intersection-form} 
is well-defined on the subgroups of $\Gamma^{\vee}$ 
and $\Gamma$ generated by the simple paths and simple cycles, 
respectively. 

Let $T=T(\phi)=(\alpha_i)_{i=1}^n$ be the labeled 
Stokes triangulation associated with $G(\phi)$ defined in 
Section \ref{subsection:SG-to-ST}, and let $B=B(T)$ be 
the signed adjacency matrix of $T$. Let $d_i$ ($i=1,\dots,n$) 
be the weight of the arc ${\alpha}_i$. 
Recall that $\tilde{B}= (\tilde{b}_{ij})_{i,j=1}^n =D^{-1}B$ 
is a skew-symmetric integer matrix 
(see \eqref{eq:bd1} and \eqref{eq:bd2}).
 
\begin{prop}
\label{prop:simple1}
 The following properties hold.
 \par
 (a).  Duality.
 \begin{align}
 \label{eq:dual1}
 \langle \gamma_i, \beta_j\rangle & = \delta_{ij}.
 \end{align}
 \par
  (b).  Intersection formula.
  \begin{align}
\label{eq:int1}
(\gamma_i,\gamma_j) &=\tilde{b}_{ij}. 
 \end{align}
  \par
  (c).  Decomposition formula.
  As an element of ${\Gamma}^{\vee}$,
  $\gamma_i$ decomposes as follows.
  \begin{align}
  \label{eq:decom1}
\gamma_i &=\sum_{j=1}^n\tilde{b}_{ji}\beta_j. 
 \end{align}
\end{prop}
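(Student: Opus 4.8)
The plan is to derive part (c) formally from parts (a) and (b) by a duality argument, and to prove (a) and (b) by reducing the computation of intersection numbers to local models on the puzzle pieces of the Stokes triangulation. For the reduction, note that the simple paths $\beta_1,\dots,\beta_n$ form a basis of the relevant sublattice of $\Gamma^{\vee}$, so the class $\gamma_i$, viewed as an element of $\Gamma^{\vee}$ through the natural map, expands as $\gamma_i=\sum_j c_j\beta_j$. Applying $\langle\gamma_k,\,\cdot\,\rangle$ and using the duality (a), which says the pairing matrix of $\{\gamma_k\}$ against $\{\beta_j\}$ is the identity, gives $\langle\gamma_k,\gamma_i\rangle=\sum_j c_j\langle\gamma_k,\beta_j\rangle=\sum_j c_j\delta_{kj}=c_k$. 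On the other hand, by (b) together with the fact that $(\,\cdot\,,\,\cdot\,)$ is the restriction of $\langle\,\cdot\,,\,\cdot\,\rangle$, the left-hand side equals $(\gamma_k,\gamma_i)=\tilde b_{ki}$. Hence $c_k=\tilde b_{ki}$, which is exactly the decomposition formula $\gamma_i=\sum_j\tilde b_{ji}\beta_j$ of (c).

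For (a) and (b) I would argue locally. Both forms are topological and are concentrated where the representing curves cross; since distinct simple cycles and paths are chosen disjoint away from a common piece of the triangulation, the numbers $\langle\gamma_i,\beta_j\rangle$ and $(\gamma_i,\gamma_j)$ vanish unless the arcs $\alpha_i,\alpha_j$ lie in a common puzzle piece. It therefore suffices to inspect, piece by piece, the finitely many local models (type i--iv of Figure \ref{fig:piece1}) and to read off the crossings from the definitions of $\beta_i,\gamma_i$ in Figures \ref{fig:bg1}, \ref{fig:bg2}, \ref{fig:bg3}. The type iii and type iv pieces contain no orbifold points, so the computation is identical to \cite{Iwaki14a}; the genuinely new cases are the type i and type ii pieces, where the simple pole is a branch point of the double cover $\hat{\Sigma}\to\Sigma$ and the simple cycle of a horizontal strip of simple-pole type must be lifted with its solid part on one sheet and its dashed part on the other as in Figure \ref{fig:bg3}. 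For (a) one checks directly that $\beta_j$ traverses the strip $D_j$ transversally and meets $\gamma_i$ in a single point exactly when $i=j$. For (b) one counts the signed crossings of $\gamma_i$ and $\gamma_j$ inside each type i and type ii piece and compares with the entries of $\tilde B=D^{-1}B$ recorded in \eqref{eq:bd1} and \eqref{eq:bd2}.

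The main obstacle will be the bookkeeping near the orbifold points. Because a pending arc carries weight $d_i=2$, the adjacency entry $b_{ij}$ and the geometric intersection number differ by this factor, and the content of (b) in the simple-pole case is precisely that the signed crossing count of the lifted simple cycles equals the reduced entry $\tilde b_{ij}=b_{ij}/d_i$ rather than $b_{ij}$ itself. Verifying that the ramification of $\hat{\Sigma}$ at the simple pole, together with the solid/dashed sheet assignment of Figure \ref{fig:bg3}, produces exactly this reduction --- with signs fixed by the orientation convention \eqref{eq:saddle-class} and the normalization $\langle x\text{-axis},\,y\text{-axis}\rangle=+1$ --- is the crux. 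I would handle the type i and type ii configurations separately, matching each crossing count entry by entry against \eqref{eq:bd1} and \eqref{eq:bd2}, and then invoke the locality argument above to assemble the global statement.
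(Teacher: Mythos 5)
Your treatment of parts (a) and (b) is essentially the paper's proof: reduce to the non-simple-pole case, which is already established in \cite[Propositions 6.25--6.27]{Iwaki14a}, and verify the genuinely new configurations --- the horizontal strips of simple-pole type --- by a local inspection of Figures \ref{fig:bg1}--\ref{fig:bg3}. Your remarks about the weight $d_i=2$ of a pending arc and the passage from $B$ to $\tilde{B}=D^{-1}B$ in \eqref{eq:bd1}--\eqref{eq:bd2} are exactly the bookkeeping that this inspection requires, and the locality argument (nonadjacent strips contribute nothing) is sound.

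The gap is in your derivation of (c). You posit $\gamma_i=\sum_j c_j\beta_j$ in $\Gamma^{\vee}$ and then use (a) and (b) to compute $c_k=\langle\gamma_k,\gamma_i\rangle=\tilde{b}_{ki}$. That argument determines the coefficients \emph{provided such a decomposition exists}, i.e.\ provided the image of $\gamma_i$ under the inclusion $H_1(\hat{\Sigma}\setminus\hat{P})\hookrightarrow H_1(\hat{\Sigma}\setminus(\hat{P}_0\cup\hat{P}_{\rm s}),\hat{P}_{\infty})$, taken modulo $\mathrm{Sym}$, lies in the subgroup $\bigoplus_j\bbZ\beta_j$. But that containment is precisely the existence half of \eqref{eq:decom1}, and you have assumed it under the phrase ``the $\beta_j$ form a basis of the relevant sublattice.'' Note that the paper is deliberately cautious on this point: it only asserts that the intersection form is well defined on the subgroups \emph{generated by} the simple paths and simple cycles, and it does not claim that the $\beta_j$ span $\Gamma^{\vee}$ (half-plane regions carry no simple path, so $\Gamma^{\vee}$ is in general strictly larger than their span). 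Duality (a) does give linear independence of the $\beta_j$ and hence uniqueness of the coefficients once existence is granted, but existence must be established separately --- and doing so amounts to exhibiting the homology between $\gamma_i$ and the signed sum of the neighboring simple paths, which is exactly what the paper reads off from Figures \ref{fig:bg1}--\ref{fig:bg3} (for the new case, from Figure \ref{fig:bg3}, with coefficients matching the columns of $\tilde{B}$ in \eqref{eq:bd1} and \eqref{eq:bd2}). So (c) should be verified directly on the local models alongside (a) and (b), rather than deduced from them.
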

\begin{proof} 
They are known in the non-simple pole case
in \cite[Propositions 6.25-6.27]{Iwaki14a}.
Thus, it is enough to prove them only when horizontal strips of
simple-pole type are involved.
This is done by inspecting Figures \ref{fig:bg1}--\ref{fig:bg3}.
\end{proof}

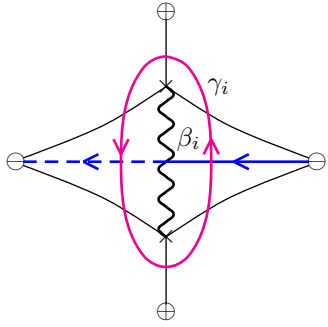
\begin{figure}
\begin{center}
\begin{pspicture}(-5,0)(-1,4.2)
\psset{unit=10mm}
%
\psset{fillstyle=solid, fillcolor=black}

\rput[c]{0}(-3,0){$\oplus$}
\rput[c]{0}(-3,4){$\oplus$}
\rput[c]{0}(-5,2){$\ominus$}
\rput[c]{0}(-1,2){$\ominus$}
\psset{fillstyle=none}
\pscurve[linewidth=1pt](-3,1)(-3.1,1.1)(-3,1.2)(-2.9,1.3)(-3,1.4)%
(-3.1,1.5)(-3,1.6)(-2.9,1.7)(-3,1.8)(-3.1,1.9)(-3,2)%
(-2.9,2.1)(-3,2.2)(-3.1,2.3)(-3,2.4)%
(-2.9,2.5)(-3,2.6)(-3.1,2.7)(-3,2.8)(-2.9,2.9)(-3,3)%
\psset{linewidth=0.5pt}

\psline(-3,3.9)(-3,3)
\psline(-3,0.1)(-3,1)
\pscurve(-3,3)(-2.2,2.5)(-1.1,2.05)
\pscurve(-3,1)(-2.2,1.5)(-1.1,1.95)
\pscurve(-3,3)(-3.8,2.5)(-4.9,2.05)
\pscurve(-3,1)(-3.8,1.5)(-4.9,1.95)

\psline[linewidth=1pt,linestyle=dashed,linecolor=blue](-4.9,2)(-3,2)
\psline[linewidth=1pt,linecolor=blue](-3,2)(-1.1,2)
\psecurve[linewidth=1pt,linecolor=magenta](-3.3,0.75)(-3,0.6)(-2.7,0.75)(-2.4,2)(-2.7,3.25)(-3,3.4)(-4.3,3.25)
\psecurve[linewidth=1pt,linecolor=magenta](-2.7,0.75)(-3,0.6)(-3.3,0.75)(-3.6,2)(-3.3,3.25)(-3,3.4)(-2.7,3.25)
\psline[linewidth=1pt,linecolor=magenta](-2.4,2.3)(-2.5,2.1)
\psline[linewidth=1pt,linecolor=magenta](-2.4,2.3)(-2.3,2.1)
\psline[linewidth=1pt,linecolor=magenta](-3.6,2.1)(-3.7,2.3)
\psline[linewidth=1pt,linecolor=magenta](-3.6,2.1)(-3.5,2.3)
\psline[linewidth=1pt,linecolor=blue](-2.1,2)(-1.9,2.1)
\psline[linewidth=1pt,linecolor=blue](-2.1,2)(-1.9,1.9)
\psline[linewidth=1pt,linecolor=blue](-4.1,2)(-3.9,2.1)
\psline[linewidth=1pt,linecolor=blue](-4.1,2)(-3.9,1.9)
\rput[c]{0}(-3,1){$\times$}
\rput[c]{0}(-3,3){$\times$}
\rput[c]{0}(-2.3,3){$\gamma_i$}
\rput[c]{0}(-2.7,2.3){$\beta_i$}
\end{pspicture}
\end{center}
\caption{Simple path and simple cycle
for a regular horizontal strip $D_i$ not surrounding 
a degenerate horizontal strip.}
\label{fig:bg1}
\end{figure}

%
\begin{figure}
\begin{center}
\begin{pspicture}(-1.4,0)(6,4.2)
\psset{unit=10mm}
\psset{linewidth=0.5pt}
\rput[c]{0}(0,0){$\oplus$}
\rput[c]{0}(0,2){$\oplus$}
\rput[c]{0}(0,4){$\ominus$}
\rput[c]{0}(0,3.2){$\times$}
\rput[c]{0}(0,1.2){$\times$}
\rput[c]{0}(-1.1,2.2){$\gamma_i{}$}
\rput[c]{0}(1.1,2.2){$\gamma_j{}$}
\rput[c]{0}(-0.53,0.8){$\beta_i{}$}
\rput[c]{0}(0.47,0.8){$\beta_j{}$}
\psset{linewidth=0.5pt,linestyle=solid}
\pscurve(0.1,0.05)(0.5,0.4)(0.9,1.6)(0.5,2.8)(0,3.2)
\pscurve(-0.1,0.05)(-0.5,0.4)(-0.9,1.6)(-0.5,2.8)(-0,3.2)
\pscurve(0,1.2)(0.4,1.4)(0.35,2.2)(0,2.4)(-0.5,1.8)(-0.1,0.1)

\pscurve(0,1.2)(-0.2,1.5)(-0.2,1.7)(-0.1,1.95)
\psline(0,0.1)(0,1.2)
\psline(0,3.2)(0,3.9)
\psset{linewidth=1pt,linestyle=solid}
\pscurve(0,1.2)(-0.1,1.1)(-0.2,1.2)(-0.3,1.3)(-0.4,1.2)
(-0.5,1.1)(-0.6,1.2)(-0.7,1.3)(-0.8,1.2)(-0.9,1.1)(-1.0,1.2)
(-1.1,1.3)(-1.2,1.2)(-1.3,1.1)(-1.4,1.2)
\pscurve(0,3.2)(-0.1,3.3)(-0.2,3.2)(-0.3,3.1)(-0.4,3.2)
(-0.5,3.3)(-0.6,3.2)(-0.7,3.1)(-0.8,3.2)(-0.9,3.3)(-1,3.2)
(-1.1,3.1)(-1.2,3.2)(-1.3,3.3)(-1.4,3.2)
\psecurve[linewidth=1pt,linestyle=solid,linecolor=magenta]
(-0.75,2.6)(-0.4,3.2)(-0.2,3.35)(0,3.4)(0.15,3.3)(0.15,3.1)(0,2.95)
(-0.4,2.2)(0,1.45)(0.15,1.3)(0.15,1.1)(0,1)(-0.2,1.05)(-0.4,1.2)(-0.75,1.8)
\psecurve[linewidth=1pt,linestyle=dashed,linecolor=magenta]
(-0.2,1.05)(-0.4,1.2)(-0.75,1.8)(-0.8,2.2)(-0.75,2.6)(-0.4,3.2)(-0.2,3.35)
\psecurve[linewidth=1pt,linestyle=solid,linecolor=magenta]
(-0.17,1.3)(-0.2,1.2)(-0.17,1.1)(0,1)(0.2,1.05)(0.4,1.2)(0.75,1.8)
(0.8,2.2)(0.75,2.6)(0.4,3.2)(0.2,3.35)(0,3.4)(-0.17,3.3)(-0.2,3.2)(-0.17,3.1)
\psecurve[linewidth=1pt,linestyle=dashed,linecolor=magenta]
(-0.17,3.3)(-0.2,3.2)(-0.17,3.1)(0,2.95)(0.4,2.2)(0,1.45)
(-0.17,1.3)(-0.2,1.2)(-0.17,1.1)
\psline[linewidth=1pt,linestyle=solid,linecolor=magenta](-0.4,2.2)(-0.3,2.4)
\psline[linewidth=1pt,linestyle=solid,linecolor=magenta](-0.4,2.2)(-0.5,2.4)
\psline[linewidth=1pt,linestyle=solid,linecolor=magenta](0.8,2.2)(0.7,2.4)
\psline[linewidth=1pt,linestyle=solid,linecolor=magenta](0.8,2.2)(0.9,2.4)
\psline[linewidth=1pt,linestyle=solid,linecolor=magenta](-0.8,2.4)(-0.7,2.2)
\psline[linewidth=1pt,linestyle=solid,linecolor=magenta](-0.8,2.4)(-0.9,2.2)
\psline[linewidth=1pt,linestyle=solid,linecolor=magenta](0.36,2.4)(0.5,2.22)
\psline[linewidth=1pt,linestyle=solid,linecolor=magenta](0.36,2.4)(0.32,2.18)

\pscurve[linewidth=1pt,linestyle=solid,linecolor=blue]
(0.1,2)(0.25,1.8)(0.3,1.1)(0.05,0.1)
\psecurve[linewidth=1pt,linestyle=solid,linecolor=blue]
(-0.1,2)(-0.1,2)(-0.25,1.8)(-0.3,1.3)(-0.05,0.1)
\psecurve[linewidth=1pt,linestyle=dashed,linecolor=blue]
(-0.25,1.8)(-0.3,1.3)(-0.05,0.1)(-0.05,0.1)
\psline[linewidth=1pt,linestyle=solid,linecolor=blue](-0.31,1.6)(-0.21,1.4)
\psline[linewidth=1pt,linestyle=solid,linecolor=blue](-0.31,1.6)(-0.41,1.4)
\psline[linewidth=1pt,linestyle=solid,linecolor=blue](0.32,1.4)(0.42,1.6)
\psline[linewidth=1pt,linestyle=solid,linecolor=blue](0.32,1.4)(0.22,1.6)
\psline[linewidth=1pt,linestyle=solid,linecolor=blue](-0.22,0.8)(-0.28,0.6)
\psline[linewidth=1pt,linestyle=solid,linecolor=blue](-0.22,0.8)(-0.1,0.62)

%
%
\psset{linewidth=0.5pt}
\rput[c]{0}(5,0){$\oplus$}
\rput[c]{0}(5,2){$\ominus$}
\rput[c]{0}(5,4){$\ominus$}
\rput[c]{0}(5,3.2){$\times$}
\rput[c]{0}(5,1.2){$\times$}
\rput[c]{0}(3.9,2.2){$\gamma_j$}
\rput[c]{0}(6.1,2.2){$\gamma_i$}
\rput[c]{0}(4.55,0.8){$\beta_j{}$}
\rput[c]{0}(5.52,0.8){$\beta_i{}$}

\psset{linewidth=0.5pt,linestyle=solid}
\pscurve(5.1,0.05)(5.5,0.4)(5.9,1.6)(5.5,2.8)(5,3.2)
\pscurve(4.9,0.05)(4.5,0.4)(4.1,1.6)(4.5,2.8)(5,3.2)
\pscurve(5,1.2)(4.6,1.4)(4.65,2.2)(5,2.4)(5.5,1.8)(5.1,0.1)
\pscurve(5,1.2)(5.2,1.5)(5.2,1.7)(5.1,1.95)
\psline(5,0.1)(5,1.2)
\psline(5,3.2)(5,3.9)
\psset{linewidth=1pt,linestyle=solid}
\pscurve(5,1.2)(4.9,1.1)(4.8,1.2)(4.7,1.3)(4.6,1.2)
(4.5,1.1)(4.4,1.2)(4.3,1.3)(4.2,1.2)(4.1,1.1)(4,1.2)
(3.9,1.3)(3.8,1.2)(3.7,1.1)(3.6,1.2)
\pscurve(5,3.2)(4.9,3.3)(4.8,3.2)(4.7,3.1)(4.6,3.2)
(4.5,3.3)(4.4,3.2)(4.3,3.1)(4.2,3.2)(4.1,3.3)(4,3.2)
(3.9,3.1)(3.8,3.2)(3.7,3.3)(3.6,3.2)
\psecurve[linewidth=1pt,linestyle=solid,linecolor=magenta]
(4.25,2.6)(4.6,3.2)(4.8,3.35)(5,3.4)(5.15,3.3)(5.15,3.1)(5,2.95)(4.6,2.2)(5,1.45)
(5.15,1.3)(5.15,1.1)(5,1)(4.8,1.05)(4.6,1.2)(4.25,1.8)
\psecurve[linewidth=1pt,linestyle=dashed,linecolor=magenta]
(4.8,1.05)(4.6,1.2)(4.25,1.8)(4.2,2.2)(4.25,2.6)(4.6,3.2)(4.8,3.35)
\psecurve[linewidth=1pt,linestyle=solid,linecolor=magenta]
(4.83,1.3)(4.8,1.2)(4.83,1.1)(5,1)(5.2,1.05)(5.4,1.2)(5.75,1.8)(5.8,2.2)
(5.75,2.6)(5.4,3.2)(5.2,3.35)(5,3.4)(4.83,3.3)(4.8,3.2)(4.83,3.1)
\psecurve[linewidth=1pt,linestyle=dashed,linecolor=magenta]
(4.83,3.3)(4.8,3.2)(4.83,3.1)(5,2.95)(5.4,2.2)(5,1.45)
(4.83,1.3)(4.8,1.2)(4.83,1.1)

\psline[linewidth=1pt,linestyle=solid,linecolor=magenta](4.6,2.2)(4.7,2.4)
\psline[linewidth=1pt,linestyle=solid,linecolor=magenta](4.6,2.2)(4.5,2.4)
\psline[linewidth=1pt,linestyle=solid,linecolor=magenta](5.8,2.2)(5.7,2.4)
\psline[linewidth=1pt,linestyle=solid,linecolor=magenta](5.8,2.2)(5.9,2.4)
\psline[linewidth=1pt,linestyle=solid,linecolor=magenta](4.2,2.4)(4.3,2.2)
\psline[linewidth=1pt,linestyle=solid,linecolor=magenta](4.2,2.4)(4.1,2.2)
\psline[linewidth=1pt,linestyle=solid,linecolor=magenta](5.36,2.4)(5.5,2.22)
\psline[linewidth=1pt,linestyle=solid,linecolor=magenta](5.36,2.4)(5.32,2.18)
\pscurve[linewidth=1pt,linestyle=solid,linecolor=blue]
(5.1,2)(5.25,1.8)(5.3,1.1)(5.05,0.1)
\psecurve[linewidth=1pt,linestyle=solid,linecolor=blue]
(4.9,2)(4.9,2)(4.75,1.8)(4.7,1.3)(4.95,0.1)
\psecurve[linewidth=1pt,linestyle=dashed,linecolor=blue]
(4.75,1.8)(4.7,1.3)(4.95,0.1)(4.95,0.1)
\psline[linewidth=1pt,linestyle=solid,linecolor=blue](4.69,1.6)(4.79,1.4)
\psline[linewidth=1pt,linestyle=solid,linecolor=blue](4.69,1.6)(4.59,1.4)
\psline[linewidth=1pt,linestyle=solid,linecolor=blue](5.32,1.4)(5.42,1.6)
\psline[linewidth=1pt,linestyle=solid,linecolor=blue](5.32,1.4)(5.22,1.6)
\psline[linewidth=1pt,linestyle=solid,linecolor=blue](4.78,0.8)(4.72,0.6)
\psline[linewidth=1pt,linestyle=solid,linecolor=blue](4.78,0.8)(4.9,0.62)
\end{pspicture}
\end{center}
\caption{Simple paths and simple cycles
for a degenerate horizontal strip $D_i$
and for the regular horizontal strip $D_j$ surrounding $D_i$.}
\label{fig:bg2}
\end{figure}
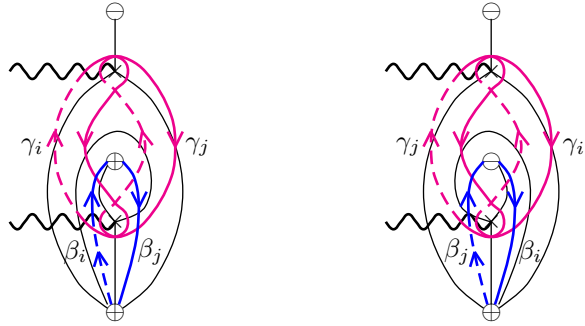

\begin{figure}
\begin{center}
\begin{pspicture}(-4.6,0)(-2.35,3.2)
\psset{unit=10mm}
%
%
\rput[c]{0}(-3,3){$\oplus$}
\rput[c]{0}(-3,0){$\ominus$}
\rput[c]{0}(-3,1.4){$\otimes$}
\rput[c]{0}(-3,2){$\times$}
\pscurve[linewidth=1pt]
(-3.1,1.3)(-3.2,1.4)(-3.3,1.5)(-3.4,1.4)%
(-3.5,1.3)(-3.6,1.4)(-3.7,1.5)(-3.8,1.4)%
(-3.9,1.3)(-4.0,1.4)(-4.1,1.5)(-4.2,1.4)%
(-4.3,1.3)(-4.4,1.4)(-4.5,1.5)(-4.6,1.4)%
\pscurve[linewidth=1pt](-3,2)(-3.1,1.9)(-3.2,2)(-3.3,2.1)(-3.4,2)%
(-3.5,1.9)(-3.6,2)(-3.7,2.1)(-3.8,2)%
(-3.9,1.9)(-4.0,2)(-4.1,2.1)(-4.2,2)%
(-4.3,1.9)(-4.4,2)(-4.5,2.1)(-4.6,2)%
\psline[linewidth=0.5pt](-3,2.9)(-3,2)
\psline[linewidth=0.5pt](-3,1.3)(-3,0.1)
\pscurve[linewidth=0.5pt](-3.1,0.02)(-3.65,0.5)(-3.65,1.5)(-3,2)
\pscurve[linewidth=0.5pt](-2.9,0.02)(-2.35,0.5)(-2.35,1.5)(-3,2)

\psecurve[linewidth=1pt,linecolor=magenta](-3.4,2)(-3.4,1.4)
(-3,1.05)(-2.6,1.4)(-2.6,2)(-3,2.35)(-3.4,2)(-3.4,1.4)
\psecurve[linewidth=1pt,linestyle=dashed,linecolor=magenta]%
(-3,2.35)(-3.4,2)(-3.4,1.4)(-3,1.05)
\psline[linewidth=1pt,linecolor=magenta](-2.71,2.2)(-2.7,1.95)
\psline[linewidth=1pt,linecolor=magenta](-2.71,2.2)(-2.5,2.05)
\psecurve[linewidth=1pt,linecolor=blue](-2.95,0.08)(-2.95,0.08)
(-2.7,1.5)(-3,1.7)(-3.3,1.5)(-3.05,0.08)
\psecurve[linewidth=1pt,linestyle=dashed,linecolor=blue](-3,1.7)
(-3.3,1.5)(-3.05,0.08)(-3.05,0.08)
\psline[linewidth=1pt,linecolor=blue](-2.67,1)(-2.61,0.72)
\psline[linewidth=1pt,linecolor=blue](-2.67,1)(-2.83,0.76)
\rput[c]{0}(-2.3,2){$\gamma_i$}
\rput[c]{0}(-2.5,0.6){$\beta_i$}
\end{pspicture}
\end{center}
\caption{Simple path and simple cycle
for a  horizontal strip $D_i$ of simple-pole type.}
\label{fig:bg3}
\end{figure}
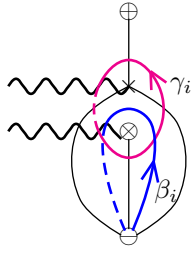

\subsection{Mutation formula of the simple paths and the simple cycles}

Next let us give the mutation formula of 
the simple paths and the simple cycles for signed flips.

We consider the situation of Section \ref{section:stokes-auto} 
(and Theorem \ref{thm:mut1}). Let 
$Q^{(\theta)}(z,\eta)$ be the $S^1$-family of the potentials 
of the Schr{\"o}dinger equation defined in \eqref{eq:S1-potentials}.  
In this subsection we assume that 
\begin{itemize}
\item 
the Stokes graph $G_0 = G(\phi)$ of \eqref{eq:Sch} 
(which corresponds to $\theta = 0$)
has a unique Stokes segment $\ell_0$, which is of type II. 
\end{itemize}

Fix a sign $\ve=\pm$. As is explained in Section \ref{section:stokes-auto}, 
for a sufficiently small $\delta>0$, the Stokes graphs 
$G=G(e^{2i\ve\delta}\phi)$ and $G'=G(e^{-2i\ve\delta}\phi)$ 
for $Q^{(+2i\ve\delta)}(z,\eta)$ and $Q^{(-2i\ve\delta)}(z,\eta)$, 
respectively, are saddle-free. 
Therefore, to the Stokes graph $G$ (resp., $G'$) 
and labels $i = 1,\dots,n$ assigned to its Stokes regions, 
we can associate the labeled Stokes triangulation 
$T = T(e^{2i\ve\delta}\phi)$ 
(resp., $T' = T(e^{-2i\ve\delta}\phi)$), 
the simple paths $\beta_1,\dots,\beta_n \in \Gamma^{\vee}$ 
(resp., $\beta'_1,\dots,\beta'_n \in \Gamma^{\vee}$), 
and the simple cycles $\gamma_1,\dots,\gamma_n \in \Gamma$ 
(resp., $\gamma'_1,\dots,\gamma'_n \in \Gamma$). 
Here we have used the same identification of paths and cycles 
on the Riemann surfaces $\hat{\Sigma}$ of $e^{+i\ve\delta}\sqrt{\phi}$
and $\hat{\Sigma}'$ for $e^{-i\ve\delta}\sqrt{\phi}$ explained 
in Section \ref{section:stokes-auto}. 
We assign the labels by the same way as in Theorem \ref{thm:mut1}; 
these labeled Stokes triangulations are related by the signed mutation
$T'=\mu_k^{(\ve)}(T)$ with $k$ being the label of the pending arc 
in $T$ to be flipped. 

\begin{prop}
\label{prop:mut2}
 Mutation formula. Let $B=B(T)$.
 Then, the following relations hold.
\begin{align}
\label{eq:bemut1}
\beta_i'&=
\begin{cases}
\displaystyle
-\beta_k
+d_k\sum_{j=1}^n [-\varepsilon \tilde{b}_{jk}]_+ \beta_j
 & i=k\\
\beta_i 
& i \neq k,
\end{cases}\\
\label{eq:gamut1}
\gamma_i'&=
\begin{cases}
-\gamma_k & i=k\\
\gamma_i + d_k [\varepsilon \tilde{b}_{ki}]_+ \gamma_k
& i \neq k.
\end{cases}
\end{align}
\end{prop}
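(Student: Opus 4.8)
The plan is to reduce the verification to the genuinely new cases and then to track the relevant curves through the signed flip by hand. Since the mutation formulas for signed flips of \emph{ordinary} arcs were already established in \cite{Iwaki14a}, and the only new feature in the present setting is the appearance of horizontal strips of simple-pole type, it suffices to check \eqref{eq:bemut1} and \eqref{eq:gamut1} in the cases where the flipped arc $\alpha_k$ is a \emph{pending} arc, so that $d_k=2$. In these cases the simple path $\beta_k$ and simple cycle $\gamma_k$ are the ones attached to a simple-pole-type strip by Figure \ref{fig:bg3}, while the neighbouring $\beta_i,\gamma_i$ ($i\neq k$) are attached to the adjacent regular or degenerate strips by Figures \ref{fig:bg1} and \ref{fig:bg2}. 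The whole computation takes place inside the fixed homology groups $\Gamma^{\vee}$ and $\Gamma$, since the double cover $\hat\Sigma$ and its homology are unchanged by the signed flip; thus each of \eqref{eq:bemut1} and \eqref{eq:gamut1} is an identity in homology to be read off from the deformation of explicit representatives, using the local pictures of Figure \ref{fig:orbi3}.

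First I would treat the cycle formula \eqref{eq:gamut1}. The identity $\gamma_k'=-\gamma_k$ follows because, as the pending arc slides past the orbifold point in Figure \ref{fig:orbi3}, the signs $\oplus,\ominus$ on the two boundaries of the simple-pole-type strip are interchanged, so the orientation convention fixing $\gamma_k$ is reversed. For $i\neq k$, only those regions $D_i$ adjacent to $\alpha_k$ are affected, and the deformed simple cycle $\gamma_i'$ differs from $\gamma_i$ by copies of $\gamma_k$; the number of copies is $d_k[\varepsilon\tilde b_{ki}]_+$, where $[\varepsilon\tilde b_{ki}]_+$ records the sign-dependent adjacency of $D_i$ to the flipped strip, and the factor $d_k=2$ reflects the branching of $\hat\Sigma$ over the simple pole, where the encircling cycle projects to a double loop on $\Sigma$. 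Regions with $\tilde b_{ki}=0$ are untouched, giving the stated formula.

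Next I would obtain the path formula \eqref{eq:bemut1}. The simple paths $\beta_i$ with $i\neq k$ are supported away from the flipping locus and are therefore unchanged, while $\beta_k$ reverses its orientation and acquires the correction $d_k\sum_{j}[-\varepsilon\tilde b_{jk}]_+\beta_j$, which is read directly off the deformation of $\beta_k$ in Figure \ref{fig:orbi3}. Alternatively, once \eqref{eq:gamut1} is in hand, \eqref{eq:bemut1} can be deduced purely formally from the decomposition formula $\gamma_i=\sum_j\tilde b_{ji}\beta_j$ of Proposition \ref{prop:simple1}(c), the duality $\langle\gamma_i,\beta_j\rangle=\delta_{ij}$ of Proposition \ref{prop:simple1}(a), and the mutation rule $B(T')=\mu_k(B(T))$ together with the skew-symmetry of $\tilde B$. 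As a consistency check one verifies directly that the two formulas preserve $\langle\gamma_i',\beta_j'\rangle=\delta_{ij}$ and the intersection formula $(\gamma_i',\gamma_j')=\tilde b_{ij}'$, where $\tilde b_{ij}'$ denotes the corresponding entry for $T'$.

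The hard part will be the geometric bookkeeping in the type ii configurations, where the pending arc sits in a piece with two orbifold points and a puncture. As noted around Figure \ref{fig:pending2}, there are infinitely many combinatorial types of such pieces because the pending arc may wind along the other pending arc, so tracking $\beta_k$ precisely—and in particular extracting the exact coefficient $d_k[-\varepsilon\tilde b_{jk}]_+$ with the correct doubling factor $d_k=2$ and its interplay with the covering involution on $\hat\Sigma$—requires the most care. Once these configurations are handled, the remaining cases are routine variants of those already drawn in Figure \ref{fig:orbi3}.
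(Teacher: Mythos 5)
Your proposal is correct and follows essentially the same route as the paper: reduce to the cases where horizontal strips of simple-pole type are involved (the non-simple-pole cases being \cite[Proposition 6.28]{Iwaki14a}), verify the cycle formula \eqref{eq:gamut1} by inspecting the explicit representatives in Figures \ref{fig:orbi3}--\ref{fig:bg3}, and obtain the path formula \eqref{eq:bemut1} from the duality \eqref{eq:dual1}. The paper's proof is exactly this two-line reduction plus figure inspection, so your additional remarks (the heuristic for the factor $d_k=2$ via the branching of $\hat\Sigma$ at the simple pole, and the consistency checks against \eqref{eq:int1}) are elaborations rather than a different method.
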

\begin{proof}
Again, they are known in the non-simple pole case
in \cite[Propositions 6.28]{Iwaki14a}.
Thus, it is enough to prove them only when horizontal strips of
simple-pole type are involved.
Furthermore, it is enough to prove only 
\eqref{eq:gamut1} thanks to the duality \eqref{eq:dual1}.
This is done by inspecting Figures \ref{fig:orbi3}--\ref{fig:bg3} and
also \cite[Figures 36]{Iwaki14a}.
\end{proof}

\subsection{Mutation formula of Voros symbols for signed flips}
Here we reformulate Theorem \ref{thm:Stokes-auto-II} 
in view of cluster algebra theory.
We follow the same argument for the non-simple pole case 
treated in \cite[Section 7]{Iwaki14a}.

Below we consider the same situation as the previous subsection: 
Fix a sign $\ve=\pm$, and consider a mutation of Stokes graphs 
(for a reduction of the Stokes segment $\ell_0$ which is of type II) 
which induces the signed flip $\mu^{(\ve)}_k$
to the associated labeled Stokes triangulations. 

Let $G$, $G'$, $T$, $T'$, 
$\beta_i, \beta'_i \in \Gamma^{\vee}$ ($i=1,\dots,n$) and  
$\gamma_i, \gamma'_i \in \Gamma$ ($i=1,\dots,n$) be the same ones 
defined in the previous subsection. 
Let $\gamma_0$ be the cycle surrounding the Stokes segment $\ell_0$
as indicated in Figure \ref{fig:saddle-classes} whose orientation 
is determined by the condition \eqref{eq:saddle-class}.

\begin{lem}
\label{lem:cycle1}
 The equality $\gamma_0=\ve \gamma_k$ holds.
\end{lem}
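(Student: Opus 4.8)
The plan is to prove the identity by directly comparing the two cycles on the fixed Riemann surface $\hat{\Sigma}$ and then pinning down the relative sign. Recall from Section \ref{section:stokes-auto} that the surfaces $\hat{\Sigma}^{(\theta)}$ for $e^{i\theta}\sqrt{\phi}$ are all identified, so that $\gamma_0$ (defined from $G_0=G(\phi)$) and $\gamma_k$ (defined from $G=G(e^{2i\ve\delta}\phi)$) lie in the same group $\Gamma$ and may be compared. First I would observe that, as an \emph{unoriented} cycle, $\gamma_k$ is the cycle encircling the simple pole $s$ and the turning point $a$ that bound the horizontal strip $D_k$ of simple-pole type: this is exactly what Figure \ref{fig:bg3} depicts, where the magenta loop runs on the first sheet (solid) past $\times$ and returns on the second sheet (dashed) past $\otimes$, so that it surrounds \emph{both} branch points. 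Under the reduction $G_0\leadsto G$ the strip $D_k$ shrinks onto a neighborhood of $\ell_0$, and the pair $(s,a)$ bounding $D_k$ is precisely the pair of branch points joined by $\ell_0$. Since $\gamma_0$ is by definition the cycle surrounding $\ell_0$ (Figure \ref{fig:saddle-classes}), the two cycles encircle the same pair of branch points of $\hat{\Sigma}$; both are anti-invariant under the covering involution and hence survive in $\Gamma$, whence $\gamma_0=\pm\gamma_k$.

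It remains to determine the sign, which is the only real content. The orientation of $\gamma_0$ is fixed once and for all by the condition \eqref{eq:saddle-class}, $\oint_{\gamma_0}\sqrt{Q_0}\,dz\in\mathbb{R}_{<0}$, independently of $\ve$ and $\delta$. By contrast, the orientation of $\gamma_k$ is read off from Figure \ref{fig:bg3}, where it is tied to the $\oplus/\ominus$ labels of the two boundary Stokes curves of $D_k$; by the rule of Section \ref{section:proof-of-Stokes-auto} these labels are computed from the sign of $\mathrm{Re}\bigl(e^{i\ve\delta}\int_b^z\sqrt{Q_0}\,dz\bigr)$ and therefore depend on $\ve$. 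I would carry out the following computation: along $\ell_0$ one has $\int_s^a\sqrt{Q_0}\,dz\in\mathbb{R}_{\neq0}$, so $\oint_{\gamma_0}\sqrt{Q_0}\,dz=\pm2\int_s^a\sqrt{Q_0}\,dz$ with the sheet-exchange sign flip; comparing this with the sign of $\mathrm{Re}\bigl(e^{i\ve\delta}\int\sqrt{Q_0}\,dz\bigr)$ that assigns $\oplus$ and $\ominus$ in $G=G_{\ve\delta}$ shows that the orientation of $\gamma_k$ induced by Figure \ref{fig:bg3} agrees with that of $\gamma_0$ when $\ve=+$ and is reversed when $\ve=-$. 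This yields $\gamma_0=\ve\gamma_k$.

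Concretely this should be verified on each of the seven local configurations of Figure \ref{fig:orbi2}; by Theorem \ref{thm:mut1} each reduces to one of the three essential cases type i-a, ii-a, ii-b drawn in Figure \ref{fig:orbi3}, so it suffices to check those, exactly as in the proof of Proposition \ref{prop:mut2}. The main obstacle I anticipate is purely bookkeeping: keeping the orientation conventions for the two-sheet cycles (solid part on the first sheet, dashed on the second) consistent between Figures \ref{fig:saddle-classes} and \ref{fig:bg3} while the branch of $\sqrt{Q_0}$, and hence the $\oplus/\ominus$ assignment, rotates with $e^{i\ve\delta}$. Once the conventions are aligned, the factor $\ve$ emerges uniformly across all configurations, completing the proof.
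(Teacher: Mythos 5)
Your proposal is correct and takes essentially the same route as the paper: the paper's proof is exactly a case-by-case inspection of the local configurations (explicitly for type i-a via Figure \ref{fig:cycle1}, with the remaining cases checked similarly from Figure \ref{fig:orbi3}), comparing the orientation of $\gamma_k$ read off from the $\oplus/\ominus$ assignment with the orientation of $\gamma_0$ fixed by \eqref{eq:saddle-class}. Your additional framing — first identifying the two cycles up to sign as loops around the same pair of branch points, then isolating the sign as the only content — is a helpful elaboration but does not change the method.
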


\begin{proof}
This is checked by inspecting Figure \ref{fig:cycle1},
where we assume that $\ell_0$  is in the type i-a configuration in
Figure \ref{fig:orbi2}.
The other cases can be checked similarly by using
Figure \ref{fig:orbi3}.
\end{proof}

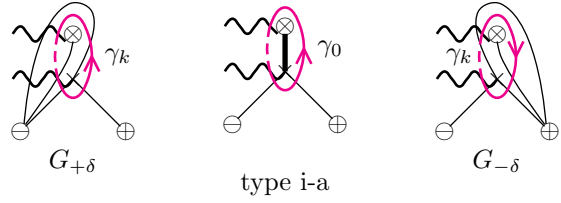
\begin{figure}
\begin{center}
\begin{pspicture}(0,-0.1)(2,2.2)
%
\psset{linewidth=0.5pt}
%
\rput[c]{0}(1,1.6){$\otimes$}
\rput[c]{0}(1,1){$\times$}
\rput[c]{0}(1.7,0.3){$\oplus$}
\rput[c]{0}(0.3,0.3){$\ominus$}
\rput[c]{0}(1,-0.1){$G_{+\delta}$}
\rput[c]{0}(1.6,1.3){$\gamma_k$}
%
\psline(1,1)(0.37,0.37)
\psline(1,1)(1.63,0.37)
\pscurve(0.37,0.4)(0.95,1.3)(1,1.5)
\pscurve(0.35,0.4)(0.95,2)(1.25,1.9)(1,1)
\pscurve[linewidth=1pt]
(0.9,1.5)(0.8,1.6)(0.7,1.7)(0.6,1.6)%
(0.5,1.5)(0.4,1.6)(0.3,1.7)(0.2,1.6)%
\pscurve[linewidth=1pt](1,1)(0.9,0.9)(0.8,1)(0.7,1.1)(0.6,1)%
(0.5,0.9)(0.4,1)(0.3,1.1)(0.2,1)%
\psecurve[linewidth=1pt,linecolor=magenta](0.8,1.6)(0.8,1)(1,0.75)(1.2,1)%
(1.2,1.6)(1,1.85)(0.8,1.6)(0.8,1)
\psecurve[linewidth=1pt,linestyle=dashed,linecolor=magenta]%
(1,1.95)(0.8,1.6)(0.8,1)(1,0.65)
\psline[linewidth=1pt,linecolor=magenta](1.25,1.3)(1.35,1.1)
\psline[linewidth=1pt,linecolor=magenta](1.25,1.3)(1.15,1.1)
\end{pspicture}
\hskip20pt
\begin{pspicture}(0,-0.2)(2,2.2)
%
\psset{linewidth=0.5pt}
%
\rput[c]{0}(1,1.6){$\otimes$}
\rput[c]{0}(1,1){$\times$}
\rput[c]{0}(1.7,0.3){$\oplus$}
\rput[c]{0}(0.3,0.3){$\ominus$}
\rput[c]{0}(1,-0.5){type i-a}
\rput[c]{0}(1.6,1.3){$\gamma_0$}
%
\psline[linewidth=2pt](1,1)(1,1.5)
\psline(1,1)(0.37,0.37)
\psline(1,1)(1.63,0.37)
\pscurve[linewidth=1pt]
(0.9,1.5)(0.8,1.6)(0.7,1.7)(0.6,1.6)%
(0.5,1.5)(0.4,1.6)(0.3,1.7)(0.2,1.6)%
\pscurve[linewidth=1pt](1,1)(0.9,0.9)(0.8,1)(0.7,1.1)(0.6,1)%
(0.5,0.9)(0.4,1)(0.3,1.1)(0.2,1)%
\psecurve[linewidth=1pt,linecolor=magenta](0.8,1.6)(0.8,1)(1,0.75)(1.2,1)%
(1.2,1.6)(1,1.85)(0.8,1.6)(0.8,1)
\psecurve[linewidth=1pt,linestyle=dashed,linecolor=magenta]%
(1,1.95)(0.8,1.6)(0.8,1)(1,0.65)
\psline[linewidth=1pt,linecolor=magenta](1.25,1.3)(1.35,1.1)
\psline[linewidth=1pt,linecolor=magenta](1.25,1.3)(1.15,1.1)
\end{pspicture}
\hskip20pt
\begin{pspicture}(0,-0.1)(2,2.2)
%
\psset{linewidth=0.5pt}
%
\rput[c]{0}(1,1.6){$\otimes$}
\rput[c]{0}(1,1){$\times$}
\rput[c]{0}(1.7,0.3){$\oplus$}
\rput[c]{0}(0.3,0.3){$\ominus$}
\rput[c]{0}(1,-0.1){$G_{-\delta}$}
\rput[c]{0}(0.5,1.3){$\gamma_k$}
%
\psline(1,1)(0.37,0.37)
\psline(1,1)(1.63,0.37)
\pscurve(1.63,0.4)(1.05,1.3)(1,1.5)
\pscurve(1.65,0.4)(1.05,2)(0.75,1.9)(1,1)
\pscurve[linewidth=1pt]
(0.9,1.5)(0.8,1.6)(0.7,1.7)(0.6,1.6)%
(0.5,1.5)(0.4,1.6)(0.3,1.7)(0.2,1.6)%
\pscurve[linewidth=1pt](1,1)(0.9,0.9)(0.8,1)(0.7,1.1)(0.6,1)%
(0.5,0.9)(0.4,1)(0.3,1.1)(0.2,1)%
\psecurve[linewidth=1pt,linecolor=magenta](0.8,1.6)(0.8,1)(1,0.75)(1.2,1)%
(1.2,1.6)(1,1.85)(0.8,1.6)(0.8,1)
\psecurve[linewidth=1pt,linestyle=dashed,linecolor=magenta]%
(1,1.95)(0.8,1.6)(0.8,1)(1,0.65)
\psline[linewidth=1pt,linecolor=magenta](1.25,1.3)(1.35,1.5)
\psline[linewidth=1pt,linecolor=magenta](1.25,1.3)(1.15,1.5)
\end{pspicture}
\end{center}
\caption{Relation between $\gamma_0$ and $\gamma_k$.
}
\label{fig:cycle1}
\end{figure}

Let $e^{W_{\beta}^{(\theta)}}=e^{W_{\beta}^{(\theta)}(\eta)}$ and
$e^{V_{\gamma}^{(\theta)}}=e^{V_{\gamma}^{(\theta)}(\eta)}$
be the Voros symbols for the Schr{\"o}dinger equation with 
the potential $Q^{(\theta)}(z,\eta)$, 
respectively. We also set 
\begin{equation}
{\mathcal S}_{\pm \varepsilon}[e^{W_{\beta}}](\eta) = 
\lim_{\delta\rightarrow + 0} 
\mathcal{S}[e^{W_{\beta}^{(\pm \varepsilon\delta)}}](\eta), \quad 
{\mathcal S}_{\pm \varepsilon}[e^{V_{\gamma}}](\eta) = 
\lim_{\delta\rightarrow + 0} 
\mathcal{S}[e^{V_{\gamma}^{(\pm \varepsilon\delta)}}](\eta). 
\end{equation}
First, we present the following intermediate result obtained 
from Theorem \ref{thm:Stokes-auto-II}.
\begin{prop}
For any path $\beta\in \Gamma^{\vee}$ and any cycle
$\gamma\in \Gamma$, we have
\begin{align} 
\label{eq:epsilon-expression-of-DDP-analytic1}
{\mathcal S}_{-\varepsilon}[e^{W_{\beta}}] &=
 {\mathcal S}_{+\varepsilon}[e^{W_{\beta}}] 
\Bigl( 1 + (t_k+t_k^{-1})
\Bigl( {\mathcal S}_{+\varepsilon}[e^{V_{\gamma_k}}]
\Bigr)^{\varepsilon} + 
\Bigl({\mathcal S}_{+\varepsilon}[e^{2V_{\gamma_k}}]
\Bigr)^{\varepsilon} \Bigr)^{-\langle\gamma_k,\beta\rangle}, 
\hspace{-.5em} \\
\label{eq:epsilon-expression-of-DDP-analytic2}
{\mathcal S}_{-\varepsilon}[e^{V_{\gamma}}] &=
 {\mathcal S}_{+\varepsilon}[e^{V_{\gamma}}] 
\Bigl( 1 + (t_k+t_k^{-1})
\Bigl( {\mathcal S}_{+\varepsilon}[e^{V_{\gamma_k}}]
\Bigr)^{\varepsilon} + 
\Bigl({\mathcal S}_{+\varepsilon}[e^{2V_{\gamma_k}}]
\Bigr)^{\varepsilon} \Bigr)^{-(\gamma_k,\gamma)},
\end{align}
where $t_k=\exp(\pi i \sqrt{1+4b_k})$,
$b_k=\lim_{z\rightarrow s_k}((z-s_k)^2Q_2(z))$ with $s_k$
being the simple pole attached to $\ell_0$.
\end{prop}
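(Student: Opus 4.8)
The plan is to reduce both \eqref{eq:epsilon-expression-of-DDP-analytic1} and \eqref{eq:epsilon-expression-of-DDP-analytic2} to Theorem \ref{thm:Stokes-auto-II}, by first matching the two notations for directional Borel sums and then treating the two signs $\varepsilon=\pm$ separately. First I would record the dictionary between the symbols $\mathcal{S}_{\pm}$ of Section \ref{section:stokes-auto} (Borel sums in direction $\pm\varepsilon$, attached to $G_{\pm\delta}=G(e^{\pm 2i\delta}\phi)$) and the symbols $\mathcal{S}_{\pm\varepsilon}$ defined here by $\mathcal{S}_{\pm\varepsilon}[\bullet]=\lim_{\delta\to+0}\mathcal{S}[\bullet^{(\pm\varepsilon\delta)}]$. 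When $\varepsilon=+$ the two coincide, $\mathcal{S}_{+\varepsilon}=\mathcal{S}_+$ and $\mathcal{S}_{-\varepsilon}=\mathcal{S}_-$; when $\varepsilon=-$ they are interchanged, $\mathcal{S}_{+\varepsilon}=\mathcal{S}_-$ and $\mathcal{S}_{-\varepsilon}=\mathcal{S}_+$.

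The case $\varepsilon=+$ is immediate. By Lemma \ref{lem:cycle1} the distinguished cycle $\gamma_0$ of Theorem \ref{thm:Stokes-auto-II} equals $\gamma_k$, and $t=t(s)=t_k$ with $s=s_k$ the simple pole attached to $\ell_0$. Substituting $\gamma_0=\gamma_k$ into \eqref{eq:Stokes-auto-II} and rewriting $\mathcal{S}_{\pm}$ as $\mathcal{S}_{\pm\varepsilon}$ gives exactly \eqref{eq:epsilon-expression-of-DDP-analytic1} and \eqref{eq:epsilon-expression-of-DDP-analytic2}, since the exponent $\varepsilon=+$ on the $\gamma_k$-Voros symbols is trivial.

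For $\varepsilon=-$ I would instead solve \eqref{eq:Stokes-auto-II} for the ``left'' Borel sum in terms of the ``right'' one. Under the interchange $\mathcal{S}_+\leftrightarrow\mathcal{S}_{-\varepsilon}$, $\mathcal{S}_-\leftrightarrow\mathcal{S}_{+\varepsilon}$, the first line of \eqref{eq:Stokes-auto-II} reads $\mathcal{S}_{+\varepsilon}[e^{W_\beta}]=\mathcal{S}_{-\varepsilon}[e^{W_\beta}]\,M^{-\langle\gamma_0,\beta\rangle}$ with $M=1+(t+t^{-1})\mathcal{S}_{-\varepsilon}[e^{V_{\gamma_0}}]+\mathcal{S}_{-\varepsilon}[e^{2V_{\gamma_0}}]$, hence $\mathcal{S}_{-\varepsilon}[e^{W_\beta}]=\mathcal{S}_{+\varepsilon}[e^{W_\beta}]\,M^{+\langle\gamma_0,\beta\rangle}$. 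Now I would invoke three facts: the no-jump relation \eqref{eq:no-jump-gamma-0} to replace $\mathcal{S}_{-\varepsilon}[e^{V_{\gamma_0}}]$ by $\mathcal{S}_{+\varepsilon}[e^{V_{\gamma_0}}]$ inside $M$; Lemma \ref{lem:cycle1} in the form $\gamma_0=-\gamma_k$, which turns $\langle\gamma_0,\beta\rangle$ into $-\langle\gamma_k,\beta\rangle$ and $e^{V_{\gamma_0}}$ into $e^{-V_{\gamma_k}}$; and the multiplicativity of the directional Borel sum. The last point is the crux: because $\gamma_0$ (hence $\gamma_k$) does not meet the Stokes segment $\ell_0$, the Voros symbols $e^{\pm V_{\gamma_k}}$ are Borel summable in the direction $+\varepsilon$ by Proposition \ref{prop:Voros-Borel-sum}, and on such series the Borel sum is an algebra homomorphism (a standard property of Laplace-type resummation, already used implicitly when factoring Voros symbols in the proof of Theorem \ref{thm:Stokes-auto-II}); applying it to the identity $e^{V_{\gamma_k}}e^{-V_{\gamma_k}}=1$ yields $\mathcal{S}_{+\varepsilon}[e^{-V_{\gamma_k}}]=(\mathcal{S}_{+\varepsilon}[e^{V_{\gamma_k}}])^{-1}=(\mathcal{S}_{+\varepsilon}[e^{V_{\gamma_k}}])^{\varepsilon}$, and likewise for the doubled symbol. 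Assembling these gives precisely \eqref{eq:epsilon-expression-of-DDP-analytic1}.

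Finally, the cycle identity \eqref{eq:epsilon-expression-of-DDP-analytic2} follows by the same manipulation applied to the second line of \eqref{eq:Stokes-auto-II}, with the pairing $\langle\gamma_k,\beta\rangle$ replaced by $(\gamma_k,\gamma)$ throughout. The main obstacle is the $\varepsilon=-$ bookkeeping, namely correctly inverting \eqref{eq:Stokes-auto-II} and justifying the passage $\mathcal{S}_{+\varepsilon}[e^{-V_{\gamma_k}}]=(\mathcal{S}_{+\varepsilon}[e^{V_{\gamma_k}}])^{\varepsilon}$ through homomorphy of the Borel sum on the $\ell_0$-avoiding classes; for $\varepsilon=+$ the statement is a direct transcription of Theorem \ref{thm:Stokes-auto-II} via Lemma \ref{lem:cycle1}.
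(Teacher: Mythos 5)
Your proposal is correct and follows essentially the same route as the paper, which derives the proposition from Theorem \ref{thm:Stokes-auto-II} together with Lemma \ref{lem:cycle1} ($\gamma_0=\ve\gamma_k$) and the no-jump equality \eqref{eq:no-jump-gamma-0}, deferring the detailed bookkeeping to the proof of Proposition 7.3 of \cite{Iwaki14a}. Your explicit treatment of the $\varepsilon=-$ case, including the inversion of \eqref{eq:Stokes-auto-II} and the use of the multiplicativity of the Borel sum to identify $\mathcal{S}_{+\varepsilon}[e^{-V_{\gamma_k}}]$ with $(\mathcal{S}_{+\varepsilon}[e^{V_{\gamma_k}}])^{-1}$, is exactly the content of that deferred argument.
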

\begin{proof}
They follow from Theorem \ref{thm:Stokes-auto-II},
by using Lemma \ref{lem:cycle1} and the equality 
\eqref{eq:no-jump-gamma-0}. 
See the proof of \cite[Proposition 7.3]{Iwaki14a} for more detail.
\end{proof}

Let us introduce
\begin{align}
\label{eq:Voros1}
e^{{W}_i} = e^{{W}^{(\ve \delta)}_{\beta_i}},
\quad 
e^{{V}_i} = e^{{V}^{(\ve \delta)}_{\gamma_i}},
\quad
e^{ v_i} =e^{\eta {v}^{(\ve \delta)}_{\gamma_i}},
\end{align}
\begin{align}
\label{eq:Voros2}
e^{{W}'_i} = e^{{W}^{(-\ve \delta)}_{\beta'_i}},
\quad 
e^{{V}'_i} = e^{{V}^{(-\ve \delta)}_{\gamma'_i}},
\quad
e^{ v'_i} =e^{\eta {v}^{(-\ve \delta)}_{\gamma'_i}}.
\end{align}
where 
\begin{align}
\label{eq:vdef1}
{v}^{(\theta)}_{\gamma} 
= e^{i\theta} \int_{\gamma} \sqrt{Q_0(z)}\, dz.
\end{align}
By \eqref{eq:decom1} we have
\begin{align}
\label{eq:vv1}
e^{V_i}=e^{v_i}
\prod_{j=1}^n (e^{W_j})^{\tilde{b}_{ji}},
\quad
e^{V'_i}=e^{v'_i}
\prod_{j=1}^n (e^{W'_j})^{\tilde{b}'_{ji}},
\end{align}
where $\tilde{B}=D^{-1}B$ and $\tilde{B}'=D^{-1}B'$ with $B$ and $B'$
being the signed adjacency matrices
 of $T$ and $T'$, respectively.
 
 Now we present the second main result of the paper.
 
\begin{thm}
\label{thm:mut2}
 We set
\begin{alignat}{5}
y_i &= \lim_{\delta\rightarrow +0} e^{v_i},
&
\quad
x_i &= \lim_{\delta\rightarrow +0} {\mathcal S}[e^{W_i}],
&\quad
\hat{y}_i &= \lim_{\delta\rightarrow +0} {\mathcal S}[e^{V_i}],\\
y'_i &= \lim_{\delta\rightarrow +0} e^{v'_i},
&
\quad
x'_i &= \lim_{\delta\rightarrow +0} {\mathcal S}[e^{W'_i}],
&\quad
\hat{y}'_i &= \lim_{\delta\rightarrow +0} {\mathcal S}[e^{V'_i}].
\end{alignat}
Then, the following relations hold:
\begin{align}
\label{eq:mut2}
y'_i&=
\begin{cases}
y_k^{-1} & i=k\\
y_i \left(y_k^{[\ve \tilde{b}_{ki}]_+}\right)^2 & i\neq k,\\
\end{cases}
\\
\label{eq:mut3}
x'_i&=
\begin{cases}
\displaystyle
x_k^{-1}
\Bigl(\prod_{j=1}^n 
x_j^{[-\ve \tilde{b}_{jk}]_+}
\Bigr)^2
(1 + (t_k+t_k^{-1})\hat{y}_k^{\ve} + \hat{y}_k^{2\ve})
 & i=k\\
x_i& i\neq k,\\
\end{cases}
\\
\label{eq:mut4}
\hat{y}'_i&=
\begin{cases}
\displaystyle
\hat{y}_k^{-1}
 & i=k\\
\hat{y}_i
\left(\hat{y}_k^{[\ve \tilde{b}_{ki}]_+}\right)^2
(1 + (t_k+t_k^{-1})\hat{y}_k^{\ve} + \hat{y}_k^{2\ve})^{-\tilde{b}_{ki}}
& i\neq k.\\
\end{cases}
\end{align}
\end{thm}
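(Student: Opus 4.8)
The plan is to obtain each of \eqref{eq:mut2}--\eqref{eq:mut4} by substituting the class-level mutation formulas of Proposition \ref{prop:mut2} into the analytic jump relations \eqref{eq:epsilon-expression-of-DDP-analytic1}--\eqref{eq:epsilon-expression-of-DDP-analytic2}, keeping track of the duality \eqref{eq:dual1} and the intersection formula \eqref{eq:int1} from Proposition \ref{prop:simple1}. Throughout I use three standing facts: the flipped arc $\alpha_k$ is a pending arc, so $d_k=2$; the period integrals $W_\beta=\int_\beta S_{\rm odd}^{\rm reg}\,dz$ and $V_\gamma=\oint_\gamma S_{\rm odd}\,dz$ are linear in the homology class, so the Voros symbols are multiplicative; and the direction-dependent Borel sum ${\mathcal S}_{\pm\varepsilon}$ acts as an algebra homomorphism on products of Borel-summable Voros symbols of the saddle-free graphs $G$ and $G'$. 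I also abbreviate $F=1+(t_k+t_k^{-1})\hat y_k^{\varepsilon}+\hat y_k^{2\varepsilon}$, which is exactly the correction factor appearing in \eqref{eq:epsilon-expression-of-DDP-analytic1}--\eqref{eq:epsilon-expression-of-DDP-analytic2}: indeed $\hat y_k={\mathcal S}_{+\varepsilon}[e^{V_{\gamma_k}}]$ gives $({\mathcal S}_{+\varepsilon}[e^{V_{\gamma_k}}])^{\varepsilon}=\hat y_k^{\varepsilon}$, while $\gamma_k$ does not meet $\ell_0$, so the Borel sum is multiplicative there and $({\mathcal S}_{+\varepsilon}[e^{2V_{\gamma_k}}])^{\varepsilon}=\hat y_k^{2\varepsilon}$.

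Formula \eqref{eq:mut2} requires no resummation. From \eqref{eq:vdef1} one has $y_i=e^{\eta\int_{\gamma_i}\sqrt{Q_0}\,dz}$ and $y'_i=e^{\eta\int_{\gamma'_i}\sqrt{Q_0}\,dz}$, so the class mutation \eqref{eq:gamut1} together with $d_k=2$ gives $y'_k=y_k^{-1}$ and $y'_i=y_i\,(y_k^{[\varepsilon\tilde b_{ki}]_+})^2$ for $i\neq k$. For \eqref{eq:mut4} I would compute $\hat y'_i={\mathcal S}_{-\varepsilon}[e^{V_{\gamma'_i}}]$ using linearity of $V$ and \eqref{eq:gamut1}. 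When $i=k$, $\gamma'_k=-\gamma_k$ and $(\gamma_k,-\gamma_k)=0$, so \eqref{eq:epsilon-expression-of-DDP-analytic2} carries no correction and $\hat y'_k=\hat y_k^{-1}$. When $i\neq k$, $e^{V_{\gamma'_i}}=e^{V_{\gamma_i}}\,(e^{V_{\gamma_k}})^{2[\varepsilon\tilde b_{ki}]_+}$; since $(\gamma_k,\gamma_k)=0$ the factor $e^{V_{\gamma_k}}$ does not jump, so ${\mathcal S}_{-\varepsilon}[e^{V_{\gamma_k}}]=\hat y_k$, while \eqref{eq:epsilon-expression-of-DDP-analytic2} with $(\gamma_k,\gamma_i)=\tilde b_{ki}$ from \eqref{eq:int1} gives ${\mathcal S}_{-\varepsilon}[e^{V_{\gamma_i}}]=\hat y_i\,F^{-\tilde b_{ki}}$. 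Multiplying produces $\hat y'_i=\hat y_i\,(\hat y_k^{[\varepsilon\tilde b_{ki}]_+})^2 F^{-\tilde b_{ki}}$, which is \eqref{eq:mut4}; the same identity can be cross-checked against \eqref{eq:mut2} and \eqref{eq:mut3} through the decomposition \eqref{eq:vv1}.

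Formula \eqref{eq:mut3} is handled analogously, now with \eqref{eq:bemut1} and the duality $\langle\gamma_k,\beta_j\rangle=\delta_{kj}$ from \eqref{eq:dual1}. For $i\neq k$ we have $\beta'_i=\beta_i$, so the exponent $-\langle\gamma_k,\beta_i\rangle$ in \eqref{eq:epsilon-expression-of-DDP-analytic1} vanishes and $x'_i=x_i$. For $i=k$, linearity and \eqref{eq:bemut1} give $e^{W_{\beta'_k}}=e^{-W_{\beta_k}}\prod_{j=1}^n(e^{W_{\beta_j}})^{2[-\varepsilon\tilde b_{jk}]_+}$, where the $j=k$ factor is trivial because $\tilde b_{kk}=0$. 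Applying ${\mathcal S}_{-\varepsilon}$ factorwise and converting each factor to the $+\varepsilon$ side by \eqref{eq:epsilon-expression-of-DDP-analytic1}, the factors with $j\neq k$ pick up $F^{-\langle\gamma_k,\beta_j\rangle}=1$, whereas ${\mathcal S}_{-\varepsilon}[e^{-W_{\beta_k}}]=x_k^{-1}F^{\langle\gamma_k,\beta_k\rangle}=x_k^{-1}F$. Collecting the factors gives $x'_k=x_k^{-1}\bigl(\prod_{j=1}^n x_j^{[-\varepsilon\tilde b_{jk}]_+}\bigr)^2 F$, which is \eqref{eq:mut3}.

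The main obstacle is not any single algebraic step but the analytic justification for distributing the direction-dependent Borel sum ${\mathcal S}_{\pm\varepsilon}$ over the products arising from the decompositions of $\beta'_k$ and $\gamma'_i$, and the careful bookkeeping of the sign $\varepsilon=\pm$ so that the generalized-cluster factor $F$ emerges with precisely the powers $\hat y_k^{\varepsilon}$ and $\hat y_k^{2\varepsilon}$ rather than their inverses. Both points are controlled exactly as in the non-simple-pole argument of \cite[Section 7]{Iwaki14a}, so the genuinely new ingredient is only the quadratic factor $F$ contributed by Theorem \ref{thm:Stokes-auto-II}; its quadratic (rather than linear) shape is what upgrades the ordinary cluster mutation to the generalized cluster mutation of \cite{Chekhov11}.
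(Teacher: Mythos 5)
Your proposal is correct and follows essentially the same route as the paper: equation \eqref{eq:mut2} from \eqref{eq:gamut1}, equation \eqref{eq:mut3} by specializing \eqref{eq:epsilon-expression-of-DDP-analytic1} to $\beta=\beta'_i$ via \eqref{eq:bemut1} and the duality \eqref{eq:dual1}, and equation \eqref{eq:mut4} by specializing \eqref{eq:epsilon-expression-of-DDP-analytic2} to $\gamma=\gamma'_i$ via \eqref{eq:gamut1} and \eqref{eq:int1}. The paper's proof is a terse pointer to exactly these substitutions (with the analytic multiplicativity of the directional Borel sums deferred to \cite[Section 7]{Iwaki14a}), and your write-up simply supplies the intermediate algebra.
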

\begin{proof}
The relation \eqref{eq:mut2} follows from
\eqref{eq:gamut1}.
To get the relation \eqref{eq:mut3}, we
specialize
\eqref{eq:epsilon-expression-of-DDP-analytic1} 
with $\beta=\beta'_i$,
then use \eqref{eq:dual1} and
\eqref{eq:bemut1}.
To get the relation \eqref{eq:mut4}, we
specialize
\eqref{eq:epsilon-expression-of-DDP-analytic2} 
with $\gamma=\gamma'_i$,
then use \eqref{eq:int1} and
\eqref{eq:gamut1}.
\end{proof}

\subsection{Generalized cluster algebras}

Let us explain that the relations 
\eqref{eq:mut2}--\eqref{eq:mut4} are indeed 
(an enhancement of)  the mutation in
{\em generalized cluster algebras} recently introduced by
\cite{Chekhov11}.
Here, for  clarity, we minimize the setting
of \cite{Chekhov11}  just to be fitted for our situation.

Let us recall the definition of a {\em seed\/} in cluster algebras.
See  \cite{Fomin07} for detail.
We fix a {\em semifield\/} $\bbP$,
which is a multiplicative abelian group with addition $\oplus$.
Let $\mathbb{Z}\bbP$ be the group ring of $\bbP$,
 and let $\bbQ\bbP$ be the field of fractions of $\bbZ\bbP$.
Let $\bbQ\bbP(x^0)$ be the rational function field in 
the {\em initial $x$-variables\/} $x^0_1,\dots,x^0_n$ over $\bbQ\bbP$.
A {\em seed \/} is a triplet $(B,x,y)$, where $B=(b_{ij})_{i,j=1}^n$
is a skew-symmetrizable matrix, $x=(x_1,\dots,x_n)$ is an $n$-tuple
with $x_i\in \bbQ\bbP(x^0)$, and $y=(y_1,\dots,y_n)$ is an $n$-tuple
with $y_i\in \bbP$.

To consider the generalized cluster algebras limited to our case,
it is crucial to assume that the matrix $B$ factorizes as $B=D\tilde{B}$,
where $D=(d_i \delta_{ij})$ is a diagonal matrix with
$d_i\in \{1,2\}$ and $\tilde{B}$ is an integer matrix,
which is indeed true in our situation (see \eqref{eq:bd1} and 
\eqref{eq:bd2}).
(In general, $\tilde{B}$ is not necessarily skew-symmetric.)
Let $I_2:=\{i\in \{1,\dots,n\} \mid d_i =2 \}$,
and let us fix $z_i\in \bbP$ for each $i\in I_2$ arbitrarily.
We consider the following ``mutation"
$(B',x',y')=\mu_k(B,x,y)$ of seeds at $k=1,\dots,n$:
$B'=\mu_k(B)$ is the usual one in \eqref{eq:bmut}, and
\begin{align}
\label{eq:mut5}
{y}'_i&=
\begin{cases}
\displaystyle
{y}_k^{-1}
 & i=k\\
 {y}_i
{y}_k^{[\ve \tilde{b}_{ki}]_+} 
(1 \oplus {y}_k^{\ve})^{-\tilde{b}_{ki}}
& i\neq k, d_k=1\\
{y}_i
\Bigl({y}_k^{[\ve \tilde{b}_{ki}]_+} 
\Bigr)^2
(1  \oplus z_k{y}_k^{\ve} \oplus {y}_k^{2\ve})^{-\tilde{b}_{ki}}
& i\neq k,d_k=2,\\
\end{cases}
\\
\label{eq:mut6}
x'_i&=
\begin{cases}
\displaystyle
x_k^{-1}
\Bigl(\prod_{j=1}^n 
x_j^{[-\ve \tilde{b}_{jk}]_+}
\Bigr)
\frac{
1  + \hat{y}_k^{\ve}}
{1  \oplus {y}_k^{\ve}}
 & i=k, d_k=1\\
\displaystyle
x_k^{-1}
\Bigl(\prod_{j=1}^n 
x_j^{[-\ve \tilde{b}_{jk}]_+}
\Bigr)^2
\frac{
1 + z_k\hat{y}_k^{\ve} + \hat{y}_k^{2\ve}
}
{
1  \oplus z_k{y}_k^{\ve} \oplus {y}_k^{2\ve}
}
 & i=k, d_k=2\\
x_i& i\neq k,\\
\end{cases}
\end{align}
where $\ve=\pm$ and we set
\begin{align}
\label{eq:yhat1}
\hat{y}_i
= y_i \prod_{j=1}^n
x_j^{\tilde{b}_{ji}}.
\end{align}
We note that $B'$ again factorizes as $B'=D\tilde{B}'$ with the same $D$.
The relations involving $k$ with $d_k=2$ in
\eqref{eq:mut5} and \eqref{eq:mut6}   are unusual
in  ordinary cluster algebras, but they are indeed an example of
the mutation for generalized cluster algebras defined in \cite{Chekhov11}.
It is easy to see that the above expressions do not depend on the choice of
the sign $\ve$.
 Furthermore, 
$\hat{y}$-variables  in \eqref{eq:yhat1}  transform as $y$-variables,
namely,
\begin{align}
\label{eq:mut7}
\hat{y}'_i&=
\begin{cases}
\displaystyle
\hat{y}_k^{-1}
 & i=k\\
 \hat{y}_i
\hat{y}_k^{[\ve \tilde{b}_{ki}]_+} 
(1 + \hat{y}_k^{\ve})^{-\tilde{b}_{ki}}
& i\neq k, d_k=1\\
\hat{y}_i
\Bigl(
\hat{y}_k^{[\ve \tilde{b}_{ki}]_+} 
\Bigr)^2
(1 + z_k\hat{y}_k^{\ve} + \hat{y}_k^{2\ve})^{-\tilde{b}_{ki}}
& i\neq k,d_k=2.\\
\end{cases}
\end{align}

The relations \eqref{eq:mut5}--\eqref{eq:mut7} with $d_k = 2$
are already very close to \eqref{eq:mut2}--\eqref{eq:mut4} and 
\eqref{eq:vv1} under the identification $z_k = t_k + t_k^{-1}$.
In fact, the only  difference between them is
 the absence of the factors $1  \oplus {y}_k{}$ and
$1 \oplus z_k {y}_k^{\ve} \oplus {y}_k^{2\ve}$
 in \eqref{eq:mut2} and \eqref{eq:mut3}.

Having this discrepancy in mind, we now specialize 
the coefficient semifield $\bbP$ to
the {\em tropical semifield\/} $\mathrm{Trop}(y^0,z)$,
which is the multiplicative free abelian group generated by
the {\em initial $y$-variables\/}
$y^0_1,\dots,y^0_n$ and $z_i$ ($i\in I_2$) with the tropical sum
\begin{align}
\begin{split}
&\left(\prod_{i=1}^n (y^0_i)^{a_i}\right)
\left(\prod_{i\in I_2} z_i^{a'_i}\right)
\oplus
\left(\prod_{i=1}^n (y^0_i)^{b_i}\right)
\left(\prod_{i\in I_2} z_i^{b'_i}\right)\\
=&
\left(\prod_{i=1}^n (y^0_i)^{\min(a_i,b_i)}\right)
\left(\prod_{i\in I_2} z_i^{\min(a'_i,b'_i)}\right).
\end{split}
\end{align}
As a rule, we start mutation from the {\em initial seed\/} $(B^0,x^0,y^0)$,
where $B^0$ is any skew-symmetrizable matrix which factorizes as $B^0
=D \tilde{B}^0$, and $x^0$ and $y^0$ are the initial $x$- and $y$-variables;
then, repeat mutations.
Our first observation is that
$z_k$ in the factor
$1  \oplus z_k{y}_k^{\ve} \oplus {y}_k^{2\ve}$ in
\eqref{eq:mut6} is superficial, and it never enters in any $y$-variable $y_i$
after any sequence of mutations.
Thus, the factor
$1  \oplus z_k{y}_k^{\ve} \oplus {y}_k^{2\ve}$
in \eqref{eq:mut5} and \eqref{eq:mut6}
are replaced with
the factor
$1  \oplus {y}_k^{\ve} \oplus {y}_k^{2\ve}$
without changing the mutation.

We say that $y_i =\prod_{j=1}^n (y^0_j)^{c_j}\in \mathrm{Trop}(y,z)$ is 
{\em sign-coherent\/} if $(c_j)_{j=1}^n$ is nonzero and either $c_j\geq 0$ 
for any $j$ or $c_j \leq 0 $ for any $j$. In this case we set the 
{\em tropical sign\/} $\ve(y_i)$ of $y_i$ as
$+$ or $-$,  according to the sign of $(c_j)_{j=1}^n$.
Our second observation, which is a familiar idea for ordinary 
cluster algebras, is that if $y_k$ is sign-coherent and 
if we set $\ve=\ve(y_k)$, then we have 
$1  \oplus {y}_k^{\ve} \oplus {y}_k^{2\ve}=1$.

Let us introduce the {\em signed mutation\/} $(B',x',y')=\mu_k^{(\ve)}(B,x,y)$ 
 of seeds ($k=1,\dots,n$; $\ve=\pm$)
for our generalized cluster algebras by replacing 
\eqref{eq:mut5} and \eqref{eq:mut6} with 
\begin{align}
\label{eq:mut8}
{y}'_i&=
\begin{cases}
\displaystyle
{y}_k^{-1}
 & i=k\\
 {y}_i
\Bigl({y}_k^{[\ve \tilde{b}_{ki}]_+} 
\Bigr)^{d_k}
& i\neq k,\\
\end{cases}
\\
\label{eq:mut9}
x'_i&=
\begin{cases}
\displaystyle
x_k^{-1}
\Bigl(\prod_{j=1}^n 
x_j^{[-\ve \tilde{b}_{jk}]_+}
\Bigr)
(1  + \hat{y}_k^{\ve})
 & i=k, d_k=1\\
\displaystyle
x_k^{-1}
\Bigl(\prod_{j=1}^n 
x_j^{[-\ve \tilde{b}_{jk}]_+}
\Bigr)^2
(1 + z_k\hat{y}_k^{\ve} + \hat{y}_k^{2\ve})
 & i=k, d_k=2\\
x_i& i\neq k,\\
\end{cases}
\end{align}
which completely agree with \eqref{eq:mut2} and \eqref{eq:mut3}.
We note that the relation \eqref{eq:mut7} still holds.
This generalizes the signed mutations for ordinary cluster algebras 
introduced in \cite[Section 4]{Iwaki14a}.

Now we have reached to the conclusion that the signed mutation $\mu_k^{(\ve)}$,
which appears in Theorem \ref{thm:mut2}, coincides with the ordinary mutation
$\mu_k$ in a generalized cluster algebra if $y_k$ is sign-coherent and
the sign $\ve$ is chosen to be the tropical sign of $y_k$;
otherwise, it is an extension of the ordinary mutation therein.

\begin{rem} 
In this paper we concentrated on the signed flips of
{\em pending arcs\/} for simplicity.
As for the signed flips of {\em ordinary arcs},
it was already shown in \cite[Theorem 7.5]{Iwaki14a} that
the Voros symbols mutate by \eqref{eq:mut8}, \eqref{eq:mut9},
and \eqref{eq:mut7} with $d_k=1$
{\em in the absence of simple poles}.
Using Propositions \ref{prop:simple1}
and \ref{prop:mut2},
we can easily extend the proof therein
to show that this is true 
{\em even in the presence of simple poles}. 
Therefore, the formulas \eqref{eq:mut8} and \eqref{eq:mut9}
integrate the mutation formula of the Voros
symbols for all cases of signed flips.
\end{rem}

Finally, let us present a conjecture clarifying the role of
Stokes triangulations from the cluster algebraic point of view.
Let $T$ be a labeled Stokes triangulation of an orbifold $(\bfO,\bfA)$.
Let us consider a sequence of signed flips
$T'=\mu_{k_N}^{(\ve_N)}\circ \cdots \circ \mu_{k_1}^{(\ve_1)}(T)$.
Let $(B,x,y)$ be a seed with $B=B(T)$,
and apply the same sequence of signed mutations to $(B,x,y)$ as
$(B',x',y')=\mu_{k_N}^{(\ve_N)}\circ \cdots \circ \mu_{k_1}^{(\ve_1)}(B,x,y)$.

\begin{conj}
We have $T=T'$ if and only if
 $(B',x',y')=(B,x,y)$.
\end{conj}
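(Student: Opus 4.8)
The plan is to factor the biconditional through a single, intrinsically defined, flip-equivariant map from labeled Stokes triangulations to seeds, so that the two implications become the well-definedness and the injectivity of that map. First I would dispose of the matrix part. A signed flip $\mu_k^{(\ve)}$ is compatible with the ordinary flip $\mu_k$ of the underlying labeled ideal triangulation and with the mutation of the signed adjacency matrix, so along the whole sequence one has $\tilde{T}'=\mu_{k_N}\circ\cdots\circ\mu_{k_1}(\tilde{T})$ and $B'=\mu_{k_N}\circ\cdots\circ\mu_{k_1}(B)=B(T')$. Consequently $T=T'$ already forces $B=B(T)=B(T')=B'$, and the genuine content of the conjecture concerns the cluster variables $x$ and the coefficients $y$ against the geometric data that a Stokes triangulation carries beyond $\tilde{T}$, namely the winding of its arcs (in particular the pending arcs) around the midpoints and orbifold points.

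The heart of the plan is to construct a map $\Phi$ assigning to every labeled Stokes triangulation $T''$ in the signed-flip class of $T$ a seed $\Phi(T'')=(B(T''),x(T''),y(T''))$ directly from its geometry rather than from a chosen flip sequence. Here I would use the orbifold realization of \cite{Felikson11} and the generalized cluster structure of \cite{Chekhov11}: each arc $\alpha_i$ is sent to its associated generalized cluster variable, and each coefficient $y_i$ to the tropical $y$-variable attached to the simple cycle $\gamma_i$, with $z_k=t_k+t_k^{-1}$ at each orbifold point, working over the tropical semifield $\mathrm{Trop}(y^0,z)$ of Section \ref{section:main-result} with algebraically independent initial $x$-variables. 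Being intrinsic, $\Phi$ is well-defined by construction, while Propositions \ref{prop:simple1} and \ref{prop:mut2}, the decomposition \eqref{eq:decom1} and intersection formula \eqref{eq:int1} (which yield \eqref{eq:vv1}), and the signed mutation rules \eqref{eq:mut8}, \eqref{eq:mut9}, \eqref{eq:mut7} show that $\Phi$ intertwines each signed flip with the corresponding signed mutation. Granting this, the ``only if'' direction is immediate: from $T=T'$ we get $\Phi(T)=\Phi(T')$, and equivariance identifies these with $(B,x,y)$ and $(B',x',y')$. The ``if'' direction then becomes precisely the injectivity of $\Phi$ on the signed-flip class.

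To prove injectivity I would pass to the $g$-vector and $c$-vector formalism. The plan is to show first that the tropical $y$-variables remain sign-coherent under arbitrary signed mutation, the generalized analogue of the Fomin and Zelevinsky sign-coherence theorem; this makes each signed mutation along its tropical sign an ordinary mutation, as already observed after \eqref{eq:mut9}, and lets me attach to each seed in the class a well-defined $G$-matrix that is a complete invariant of the labeled seed. Injectivity of $\Phi$ then reduces to two faithfulness statements for the orbifold realization: that distinct arcs --- in particular pending arcs with distinct winding numbers around a given orbifold point --- are sent to distinct generalized cluster variables, and that distinct clusters are sent to distinct seeds. Both would follow from the linear independence of generalized cluster monomials, which plays here the role that the independence of ordinary cluster monomials plays in the surface case.

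The hard part will be exactly these last inputs. For cluster algebras from bordered surfaces they are all available --- the bijection between arcs and cluster variables of \cite{Fomin08}, the linear independence of cluster monomials, and sign-coherence --- and with them the argument above proves the corresponding statement in the absence of orbifold points unconditionally. In the present generalized setting the deformation of the exchange relations by the factors $1+z_k\hat{y}_k^{\ve}+\hat{y}_k^{2\ve}$ blocks a direct transfer of those proofs, and no sufficiently complete $g$-vector theory or monomial linear independence result is yet available for the generalized cluster algebras of Chekhov and Shapiro attached to weight-$2$ orbifold points. Supplying that theory, and thereby the sign-coherence and linear independence used above, is the decisive step on which the whole plan rests, and it is the reason the statement is presently only a conjecture.
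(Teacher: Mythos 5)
The statement you are asked about is explicitly a \emph{conjecture} in the paper: the authors give no proof of it and describe it only as an extension of \cite[Conjecture 6.6 (i)]{Iwaki14a}. So there is no argument in the paper to compare yours against, and your text is, by your own admission in the final paragraph, a proof \emph{plan} rather than a proof. As such it cannot be accepted as a proof of the statement. The plan itself is a sensible reduction --- factor the biconditional through a flip-equivariant assignment $\Phi$ of seeds to labeled Stokes triangulations, so that ``only if'' becomes well-definedness of $\Phi$ and ``if'' becomes its injectivity --- and this is indeed the route by which the analogous statement is approached in the surface case. But beyond the gap you name (sign-coherence and linear independence of cluster monomials for the generalized cluster algebras of \cite{Chekhov11}), there is a second, more basic circularity you should flag: your claim that ``being intrinsic, $\Phi$ is well-defined by construction'' is not a construction. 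A generalized cluster variable is defined only via a mutation sequence from the initial seed; to attach one to an arc \emph{intrinsically} you must already know that any two signed-mutation sequences producing the same arc of a Stokes triangulation produce the same $x$-variable. That is essentially the ``only if'' direction of the conjecture itself (the hard periodicity statement), not something that comes for free. In the surface case this is the content of the arc--variable bijection of \cite{Fomin08} together with the tropical-sign analysis of \cite{Iwaki14a}; for orbifold points of weight $2$, for the refined Stokes triangulations (whose arcs carry winding data around midpoints invisible to $\tilde{T}$), and for the deformed exchange relations with the factor $1+z_k\hat{y}_k^{\ve}+\hat{y}_k^{2\ve}$, no such result is established.

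In short: the decomposition into well-definedness plus injectivity is a reasonable strategy and correctly isolates where the difficulty lies, but both halves, not just the injectivity half, rest on unproven inputs, and the statement remains open exactly as the paper presents it.
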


This is an extension of \cite[Conjecture 6.6 (i)]{Iwaki14a}.



\begin{thebibliography}{99}
\small

\bibitem[AIT]{Aoki14} 
 T. Aoki, K. Iwaki and T. Takahashi, 
 {\em Exact WKB analysis of Schr{\"o}dinger equations with 
 a Stokes curve of loop type}, in preparation. 

\bibitem[AKT91]{Aoki91}
 T. Aoki, T. Kawai and Y. Takei, 
 {\em The Bender-Wu analysis and the Voros theory},  
 ICM-90 Satellite Conf. Proc. ``Special Functions", 
 Springer-Verlag, 1991, pp. 1--29.

\bibitem[AKT09]{Aoki09}
 \bysame, 
 {\em The Bender-Wu analysis and the Voros theory II}, 
 Adv. Stud. in Pure Math., {\bf 54}, Math. Soc. Japan, 2009, 
 pp.19--94.

\bibitem[BS13]{Bridgeland13} 
 T. Bridgeland and I. Smith, 
 {\em Quadratic differentials as stability conditions}, 
 2013, arXiv:1302.7030 [math.AG]. 

\bibitem[CS14]{Chekhov11}
 L. Chekhov and M. Shapiro, 
 {\em Teichm\"uller spaces of Riemann surfaces with orbifold 
 points of arbitrary order and cluster variables}, 
 Int. Math. Res. Notices {\bf 2014} (2014), 2746--2772;
 arXiv:1111.3963 [math-ph].
 
\bibitem[Cos09]{Costin09}
 O. Costin, {\em Asymptotics and Borel Summability}, Monographs and Surveys 
 in Pure and Applied Mathematics 141, 
 Chapmann \verb+&+ Hall/CRC, New York, 2009.

\bibitem[DDP93]{Delabaere93}
 E. Delabaere, H. Dillinger and F. Pham,   
 R\'esurgence de Voros et p\'eriodes des courbes hyperelliptiques,
 Ann. Inst. Fourier (Grenoble) {\bf 43} (1993), 163--199.

\bibitem[DP99]{Delabaere99}
 E. Delabaere and F. Pham, 
 {\em Resurgent methods in semi-classical asymptotics}, 
 Ann. Inst. Henri Poincar\'e {\bf 71} (1999), 1--94.

\bibitem[FST11]{Felikson11}
A. Felikson, M. Shapiro, and P. Tumarkin, 
{\it Cluster algebras and triangulated orbifolds},
Adv. in Math. {\bf 231} (2012), 2953--3002; arXiv:1111.3449 [math.CO].

\bibitem[FG06]{Fock03b}
V.V. Fock and A.B. Goncharov, {\it Moduli spaces of local systems and 
higher Teichm\"uller theory}, 
Publ. Math. IHES {\bf 103} (2006), 1--211; 
arXiv:math/0311149 [math.AG].

\bibitem[FST08]{Fomin08}
S. Fomin, M. Shapiro, and D. Thurston, 
{\it Cluster algebras and triangulated surfaces. Part I: Cluster complexes}, 
Acta Math. {\bf 201} (2008), 83--146; arXiv:math/0608367 [math.RA].

\bibitem[FT12]{Fomin08b}
S. Fomin and D. Thurston, 
{\it Cluster algebras and triangulated surfaces. Part II: Lambda lengths}, 
2012, arXiv:1210.5569 [math.GT].

\bibitem[FZ03]{Fomin03a}
S. Fomin and A. Zelevinsky, 
{\it Cluster algebras II. Finite type classification}, 
Invent. Math. {\bf 154} (2003), 63--121; arXiv:math/0208229 [math.RA].

\bibitem[FZ07]{Fomin07}
\bysame , {\it Cluster algebras IV. Coefficients}, 
Compositio Mathematica {\bf 143} (2007), 112--164; 
arXiv:math/0602259 [math.RT].

\bibitem[GMN13]{Gaiotto09}
D. Gaiotto, G.W. Moore, and A. Neitzke, 
{\em Wall-crossing, Hitchin systems, and the WKB approximation}, 
Adv. in Math. {\bf 234} (2013), 239--403; arXiv:0907.3987 [hep-th].

\bibitem[GSV05]{Gekhtman05} 
M. Gekhtman, M. Shapiro, and A. Vainshtein, 
{\it Cluster algebras and Weil-Petersson forms}, 
Duke Math. J. {\bf 127} (2005), 291--311; arXiv:math/0309138 [math.QA].

\bibitem[IN14]{Iwaki14a}
K. Iwaki and T. Nakanishi, 
{\it Exact WKB analysis and cluster algebras}, 
2014, arXiv:1401.7094 [math CA], 
to appear in J. Phys. A.

\bibitem[KKKT10]{Kamimoto10}
S. Kamimoto, T. Kawai, T. Koike and Y. Takei, 
{\it On the WKB-theoretic structure of a Schr{\"o}dinger operator 
with a merging pair of a simple pole and a simple turning point},
Kyoto J. Math. {\bf 50} (2010), 101--164.

\bibitem[KK11]{Kamimoto11}
S. Kamimoto and T. Koike, 
{\it On the Borel summability of WKB-theoretic transformation series},
preprint of RIMS-1726. 

\bibitem[KT05]{Kawai05} 
T. Kawai and Y. Takei, 
{\it Algebraic Analysis of Singular Perturbation Theory}, 
Translations of Mathematical Monographs, 
volume 227, American Mathematical Society, 2005.
  
\bibitem[Koi00]{Koike00} T. Koike, 
{\em On the exact WKB analysis of second order linear ordinary 
differential equations with simple poles}, 
Publ. RIMS, Kyoto Univ. {\bf 36} (2000), 297--319.
  
\bibitem[KS]{Koike-Schafke} 
T. Koike and R. Sch\"afke, 
 {\it On the Borel summability of WKB solutions of 
 Schr\"odinger equations with polynomial potentials 
 and its application}, in preparation; also Talk given 
 by T. Koike in the RIMS workshop 
 ``Exact WKB analysis -- Borel summability of WKB solutions", 
 September, 2010.
  
\bibitem[KT11]{Koike11}
T. Koike and Y. Takei, 
 {\it On the Voros Coefficient for the Whittaker Equation with 
 a Large Parameter -- Some Progress around Sato's Conjecture 
 in Exact WKB Analysis}, 
 Publ. RIMS, Kyoto Univ. {\bf 47} (2011), 375--395.
  
\bibitem[Qiu14]{Qiu14}
Y. Qiu, {\it On the spherical twists on 3-Calabi-Yau categories 
from marked surfaces}, 2014, arXiv:1407.0806.
  
\bibitem[Str84]{Strebel84}
K. Strebel, {\it Quadratic Differentials}, Springer-Verlag, 1984.

\bibitem[Vor83]{Voros83}
A. Voros, 
{\it The return of the quartic oscillator. The complex WKB method}, 
Ann. Inst. Henri Poincar\'e \textbf{39}(1983), 211--338.
  
  
\end{thebibliography}


\end{document}